%% file: main.tex
\documentclass{amsart}

\usepackage[foot]{amsaddr}

\usepackage{subfig}
\usepackage{graphicx}
\usepackage{amssymb,amsfonts,amsrefs}
\usepackage{hyperref}
\usepackage{mathcommands}
\usepackage{tikz-cd}
\usepackage{enumerate}
\usepackage{array}
\usepackage{bm}
\usepackage{pgfplots}
\pgfplotsset{compat=1.18}
\usepackage[toc,page]{appendix}
\usepackage{multirow}

\newcommand{\medmatrix}[2][.8]{%
  \scalebox{#1}{%
    \renewcommand{\arraystretch}{.8}%
    $\begin{pmatrix}#2\end{pmatrix}$%
  }
}

\usepackage{makecell}
\newcolumntype{K}[1]{>{\centering\arraybackslash}m{#1}}

\usepackage{changes}

\newcommand{\ldVertex}[4]{\node (#1) at (#2) [inner sep=2pt] {#3 {\tiny [#4]}};}
\newcommand{\ldEdge}[4]{\draw[->] (#1) edge[#4] node[fill=white,circle, inner sep =1pt,scale=.7] {#3} (#2);}

\usepackage[OT2,T1]{fontenc}
\DeclareSymbolFont{cyrletters}{OT2}{wncyr}{m}{n}
\DeclareMathSymbol{\Sha}{\mathalpha}{cyrletters}{"58}
\DeclareMathSymbol{\B}{\mathalpha}{cyrletters}{"42}

\title[]{Bockstein Spectral Sequences and Applications to the Tame Fontaine Mazur Conjecture}

\author{Julian Feuerpfeil}

\address{Dipartimento di Matematica e Applicazioni, Università di Milano-Bicocca, 20125 Milano, Italy}

\address{FEMTO-ST, Université Marie et Louis Pasteur,
25030 Besançon, France}

\email{\href{mailto:j.feuerpfeil@campus.unimib.it}{j.feuerpfeil@campus.unimib.it}}

\begin{document}
% ABSTRACT
\begin{abstract} 
For a number field $K$, a prime $p$ and a finite set of tame places $S$ we consider the groups $G_{K,S}$ --- the Galois groups of the maximal pro-$p$ extension of $K$ unramified outside $S$. The tame Fontaine--Mazur Conjecture predicts that these groups have no nontrivial uniformly powerful pro-$p$ quotients. 

In this paper we develop a new approach to this problem using Bockstein spectral sequences and Lie-theoretic tools. This allows us to extend and refine an earlier method due to J.~Labute, who was only able to consider the case where $p^2\nmid N(\mathfrak{q})-1$ for each $\mathfrak{q}\in S$.

Based on this we develop a method to verify the uniform Fontaine--Mazur property for many $G_{K,S}$ with $|S|=3$ arbitrary. Under mild conditions on $K$ we show that for infinitely many triples $S$, the groups $G_{K,S}$ have no nontrivial uniform quotients. We also exhibit a large class of these groups, which are infinite. Finally, we present numerical evidence indicating that the criteria developed here detect the uniform Fontaine--Mazur property with very high probability for $|S|=3$.

\noindent \textbf{Keywords.} uniform Fontaine--Mazur Conjecture, Linking diagrams, Bockstein--Spectral sequences, uniform pro-$p$ groups

\noindent \textbf{MSC2020} Primary: 11F80 Secondary: 11R34 20J06, 17B56

\noindent \textbf{ORCID:} \ \href{https://orcid.org/0009-0000-0148-3348}{0009-0000-0148-3348}

%\noindent \textbf{e-mail:} \ \ \ \href{mailto:j.feuerpfeil@campus.unimib.it}{j.feuerpfeil@campus.unimib.it}

\end{abstract}

\maketitle

%\tableofcontents

\input{contents}

\section*{Acknowledgments}
I would like to thank, first and foremost, my supervisors Thomas Weigel and Christian Maire for posing this research question to me, for guiding and supporting me throughout this project, and their careful proof-reading of this manuscript. I thank Simone Blumer, Oussama Hamza, Holger Kammeyer, Benjamin Klopsch, and Donghyeok Lim for stimulating discussions, helpful comments and their interest in this research. I am grateful to Andrei Jaikin-Zapirain and John Labute for their interest in this work.

I am a member of Gruppo Nazionale per le Strutture Algebriche, Geometriche e le loro Applicazioni (GNSAGA), which is part of the Istituto Nazionale di Alta Matematica (INdAM). Furthermore, I participate in the \lq\lq INdAM - GNSAGA Project\rq\rq\ CUP E53C25002010001.

\appendix
\input{appendix}

\bibliography{references}
\newpage

\end{document}

%% file: contents.tex
\section*{Introduction}
Let $K$ be a number field. The $p$-adic étale cohomology of smooth projective varieties over $K$ gives rise to geometric representations of the absolute Galois group $\Gamma_K$ of $K$. These are characterized by ramification at a finite set of places together with local $p$-adic Hodge theoretic conditions. In \cite{FontaineMazur1995} J-M.~Fontaine and B.~Mazur posed their celebrated \emph{Fontaine--Mazur Conjecture}, asserting that every geometric representation arises in this way. Note that there exist many representations of $G_K$ which are not geometric (cf.,  \cites{Ramakrishna2008,HajirLarsenMaireRamakrishna2025}).

The condition that geometric representations are only ramified at finitely many places implies that the associated homomorphism $\rho:G_K\to \GL_n(\ZZ_p)$ factors through $\Gamma_{K,S}$ --- the Galois group of the maximal extension $K_S/K$ unramified outside $S$ --- for a suitable finite set of places $S$. If no place in $S$ lies above $p$, then $\rho:\Gamma_{K,S}\to \GL_n(\ZZ_p)$ is geometric and Conjecture 5a of \cite{FontaineMazur1995} asserts that the image of $\rho$ is finite, being equivalent to the condition that $\Gamma_{K,S}$ does not admit any infinite $p$-adic analytic quotients. It was only recently shown, by \cite{KisinWortmann2003} and \cite{Moonen2019}, that this statement follows from the (general) Fontaine--Mazur Conjecture together with Tate's conjecture on algebraic cycles.

We denote the maximal pro-$p$ quotient of $\Gamma_{K,S}$ by $G_{K,S}$. Since every $p$-adic analytic group contains an open uniform group (see \cite{DDMS1999}) it follows that \cite{FontaineMazur1995}*{Conj. 5a} is equivalent to the following.
\begin{conj*}[uniform Fontaine--Mazur]
    For each number field $K$ and each finite set of tame places $S$, the group $G_{K,S}$ does not admit a nontrivial uniform quotient.
\end{conj*}

Motivated by this conjecture, we say that a pro-$p$ group $G$ has the \emph{uniform Fontaine--Mazur} property (or short \emph{uFM} property) if $G$ admits no nontrivial uniform quotients.
\begin{rem*}
    Given a group $G$, the property of not admitting infinite uniform quotients is weaker than the property of not admitting infinite $p$-adic analytic quotients.  For example the group $\SL_1^1(\Delta_p)$, where $\Delta_p$ a maximal order in a non-split quaternion algebra over $\QQ_p$, is just-infinite and not uniform, but $p$-adic analytic (cf. \cite{GonzaleSanchezKlopsch2009}). 
\end{rem*}
In \cite{Labute2014} J.~Labute posed a Lie-theoretic approach to the tame Fontaine--Mazur conjecture, by constructing a map 
\begin{align*}
    \ell:\Hom_{\rm cont}(G_{\QQ,S},\GL_n^1(\ZZ_p))\to \Hom(\mathfrak{l}_S,\mathfrak{gl}_n(\FF_p))
\end{align*}
where $\mathfrak{l}_S$ is an $\FF_p$-Lie algebra mimicking the presentation of $G_{\QQ,S}$ determined by H.~Koch (see \cites{Koch1966,Koch2002}). This map has the property that $\ell(\rho)=0$ if and only if $\rho$ is trivial. Hence if $\mathfrak{l}_S$ does not admit nontrivial (finite-dimensional) representations, then the same is true for representations $G_{\QQ,S}\to \GL_n^1(\ZZ_p)$.  

The definition of $\mathfrak{l}_S$ and the fact that only $\FF_p$-representations are considered, inherently limits this approach to sets of primes $S=\{q_1,...,q_d\}$ for which $q_i\not\equiv 1\pmod {p^2}$ for all $i=1,...,d$. {\color{black} In this paper we develop an approach allowing us to consider primes $q_i$, for which $v_p(q_i-1)$ is large, by studying $\ZZ_p$-Lie algebras. Our strategy is the following:} 

\subsection*{Strategy}
{\color{black}
A central construction in our approach is the so-called \emph{Bockstein spectral sequence} associated to (pro-$p$) groups $G$ and $\ZZ_p$-Lie algebras $\Lie$. We denote them by $(\BS_r^*(G),\beta^*_r)$ and $(\BS_r^*(\Lie),\mathfrak{b}_r^*)$. Unlike spectral sequences arising from filtrations, Bockstein spectral sequences are naturally one-graded and are derived from the long cohomology sequence associated to the coefficient sequence $0\to \ZZ_p\to \ZZ_p\to \FF_p\to 0$. We refer to Section~\ref{sec:Bockstein spectral sequences} for the construction and further explanations. In the first page they agree with $\rmH^*(G,\FF_p)$ resp. $\rmH^*(\Lie,\FF_p)$ and the differentials are given by the classical Bockstein homomorphisms. These are the connecting homomorphisms $\rmH^i(G,\FF_p)\to \rmH^{i+1}(G,\FF_p)$ and $\rmH^i(\Lie,\FF_p)\to \rmH^{i+1}(\Lie,\FF_p)$ arising from the exact sequence.
\begin{align*}
    0\to \FF_p\to \ZZ/p^2\to \FF_p\to 0.
\end{align*}
The following theorem is the key technical ingredient of the paper. It identifies the Bockstein spectral sequence of a uniform pro-$p$ group with the one of its associated $\ZZ_p$-Lie algebra. The main ingredients were already shown by A.~Huber, G.~Kings, and N.~Naumann in \cite{HuberKingsNaumann2011}.
\begin{mainthm}[Theorem~\ref{thm:uniform Bocksteins agree}]
    Let $U$ be a uniform pro-$p$ group and $\Lie(U)$ be its associated $\ZZ_p$-Lie algebra. Then the Bockstein spectral sequences $(\BS^*_r(U),\beta_r^*)$ and $(\BS^*_r(\Lie(U)),\mathfrak{b}_r^*)$ are naturally isomorphic.
\end{mainthm}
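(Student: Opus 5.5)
The plan is to realize both Bockstein spectral sequences as the spectral sequences attached to exact couples. In each case the couple comes from the long exact cohomology sequence of $0\to \ZZ_p\xrightarrow{p}\ZZ_p\to \FF_p\to 0$. Once the couples are set up, it suffices to produce a natural isomorphism of exact couples.

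Concretely, for $U$ the couple is
\[
(\rmH^*(U,\ZZ_p),\ \rmH^*(U,\FF_p))
\]
with maps $p$, reduction and connecting homomorphism, using continuous cochains. For $\Lie=\Lie(U)$ it is
\[
(\rmH^*(\Lie,\ZZ_p),\ \rmH^*(\Lie,\FF_p))
\]
with the same maps, computed with the Chevalley--Eilenberg complex. By the standard construction of derived couples, the pages $E_r$ and differentials $d_r$ are functorial in the exact couple. So an isomorphism of couples induces an isomorphism of all pages $\BS_r^*$ compatible with $\beta_r^*$ and $\mathfrak{b}_r^*$.

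\textbf{Step 1: the comparison map.} The key input is the result of Huber--Kings--Naumann (\cite{HuberKingsNaumann2011}). For a uniform group $U$ they construct a natural comparison map
\[
\rmH^*(U,\ZZ_p)\to \rmH^*(\Lie(U),\ZZ_p)
\]
which is an isomorphism. Their construction goes through Lazard's isomorphism, $\rmH^*(U,\QQ_p)\cong \rmH^*(\Lie\otimes\QQ_p,\QQ_p)$, refined integrally for uniform groups. I would make sure the map is induced by a zig-zag of quasi-isomorphisms of cochain complexes of $\ZZ_p$-modules, $C^\bullet(U,\ZZ_p)\simeq C^\bullet_{CE}(\Lie,\ZZ_p)$, with all terms finite free (or at least $\ZZ_p$-flat).

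\textbf{Step 2: passing to $\FF_p$ coefficients.} Apply $-\otimes^{\mathbb L}\FF_p$ to the zig-zag, i.e.\ take the mapping cone of multiplication by $p$. This gives a quasi-isomorphism between the mod $p$ complexes, hence an isomorphism
\[
\rmH^*(U,\FF_p)\cong \rmH^*(\Lie,\FF_p).
\]
The short exact sequences of complexes $0\to C\xrightarrow{p}C\to C/p\to 0$ are natural along the zig-zag. Their long exact sequences are therefore compatible, including the connecting maps. This yields the required isomorphism of exact couples.

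\textbf{Step 3: conclusion and naturality.} Deduce the isomorphism of spectral sequences from Step 2, and check naturality in morphisms of uniform groups. The latter follows from naturality of the Huber--Kings--Naumann map.

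\textbf{Main obstacle.} I expect the hard part to be Step 1 at the cochain level. Huber--Kings--Naumann may state their result only on cohomology with $\ZZ_p$ coefficients. An abstract isomorphism of $\rmH^*(-,\ZZ_p)$ does not by itself give compatibility with the mod $p$ reductions and Bocksteins. My first attempt would be to use that their isomorphism is induced by explicit maps of complexes, through the Lazard/Koszul-type resolutions for the completed group algebra $\ZZ_p[[U]]$ and for $U(\Lie)$. This should hold uniformly for the coefficient modules $\ZZ_p$, $\ZZ/p^n$ and $\FF_p$. If that route fails, the fallback is to argue with the couples for all $\ZZ/p^n$ at once. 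Their version of the comparison is natural in the coefficient module, which is enough to make the connecting homomorphisms of $0\to\ZZ/p^{n}\to\ZZ/p^{n+1}\to \FF_p\to0$ compatible. One then identifies $\BS_r$ through the usual description of the $r$-th Bockstein via lifting to $\ZZ/p^{r+1}$.
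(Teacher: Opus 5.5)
Your proposal is correct and follows essentially the paper's route. The paper first records a lemma deduced from Huber--Kings--Naumann (their Thm.~3.1.1 and Prop.~3.4.8). It gives comparison isomorphisms $\rmH^n_{\rm cts}(U,M)\cong \rmH^n(\Lie,M)$ that are natural in the finitely generated trivial module $M$ and compatible with connecting homomorphisms, because the maps are constructed on the cochain level. The theorem is then the immediate consequence of applying this to $0\to \ZZ_p\xrightarrow{p}\ZZ_p\to \FF_p\to 0$, which yields an isomorphism of exact couples. The obstacle you single out (cochain-level compatibility with the connecting maps) is exactly what that lemma asserts, and your fallback via naturality in the coefficient module is essentially how the paper phrases it.
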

The second page of the Bockstein spectral sequence associated to a uniform pro-$p$ group has been computed by W.~Browder and J.~Pakianathan in \cite{BrowderPakianathan2000}. This result has been used by T.~Weigel in \cite{Weigel2000} to study the rigidity of certain uniform pro-$p$ groups and $\ZZ_p$-Lie algebras. In \cite{BrotoLevi1997} C.~Broto and R.~Levi posed the question whether a finite $p$-group is uniquely determined by its Bockstein spectral sequence. For many groups the answer is positive \cite{DiazRuizViruel2013} and no counterexample is known so far.
}

Using the functoriality of the Bockstein spectral sequences we translate the relations in $G_{K,S}$ into congruence relations inside the $\ZZ_p$-Lie algebra constructed by M.~Lazard \cites{Lazard1965,DDMS1999} of a (potential) uniform quotient. While the first Bockstein differential reproduces the classical congruences considered by J.~Labute, higher differentials yield congruences modulo higher powers of $p$. These stronger congruences frequently force the Lie algebra to be trivial, even when the relations modulo $p$ are not sufficient to conclude the triviality. This allows us to exclude the existence of nontrivial uniform quotients of several pro-$p$ groups, even if $G^{\rm ab}$ is not elementary. 
\begin{exmp*}[Example \ref{exmp:four generators five relations}]
    Let $n$ be a positive integer and $G$ be a pro-$p$ group generated by $x_1,x_2,x_3,x_4$ such that the elements
    \begin{align*}
        x_1^{p^n}[x_1,x_2],\quad x_2^{p^n}[x_2,x_3],\quad x_3^{p^n}[x_3,x_4],\quad x_4^{p^n}[x_4,x_1],\quad [x_1,x_3][x_2,x_4]
    \end{align*}
    are contained in $(G')^p[G',G]$ then $G$ has no nontrivial uniform quotients.
\end{exmp*}
For $n=1$ this was already known to J.~Labute even without the relation $[x_1,x_3][x_2,x_4]$. A key feature of our approach is that $n$ can be any positive integer.

We develop a method summarized in Table~\ref{tab:rk3 Algorithm}, composed of multiple smaller results to verify the uFM property for pro-$p$ groups $G_{K,S}$ for $|S|=3$ and $h_p(K)=1$. In conjunction with Theorem~\ref{thm:Linking diagram approximation by arithmetic} we show the following result also for number fields under mild assumptions.
\begin{mainthm}[Theorem~\ref{thm:realization of uFM groups} for $K=\QQ$]
    Let $p$ be an odd prime, $a_1,a_2,a_3\in \ZZ\setminus \{0\}$ and $k\geq 1$ there exist infinitely many sets $S=\{q_1,q_2,q_3\}$ of tame primes with $N(q_i)-1\equiv pa_i\pmod {p^k}$ such that $G_{\QQ,S}$ has the uFM property.
\end{mainthm}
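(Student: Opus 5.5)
The plan is to combine three ingredients. The first is Koch's presentation of $G_{\QQ,S}$, which has generators $x_1,x_2,x_3$ (lifts of tame inertia generators) and relations $x_i^{q_i-1}[x_i^{-1},y_i^{-1}]$, where $y_i$ is a Frobenius lift. The second is the translation of these relations, via the Bockstein spectral sequence, into congruences in the $\ZZ_p$-Lie algebra $\Lie(U)$ of a putative uniform quotient $U$, using Theorem~\ref{thm:uniform Bocksteins agree}. The third is Chebotarev-type density arguments that produce infinitely many admissible triples $S$.

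\textbf{Step 1: write down the Lie congruences.} Modulo $(G')^p[G',G]$, each $y_i$ equals $\prod_j x_j^{\ell_{ij}}$, where the $\ell_{ij}\in\ZZ/p^k$ are linking numbers, i.e.\ $q_i$-adic logarithms of $q_j$. With $q_i-1=p^{n_i}u_i$, $u_i$ a unit, the $i$th relation reads
\[
x_i^{p^{n_i}u_i}\prod_{j\ne i}[x_i,x_j]^{\ell_{ij}}\in (G')^p[G',G].
\]
This is exactly the shape of Example~\ref{exmp:four generators five relations}. The Bockstein differential $\beta_{n_i}$ then detects the relation, and under the isomorphism of spectral sequences it becomes a congruence
\[
p^{n_i}u_i X_i+\sum_{j\neq i}\ell_{ij}[X_i,X_j]\equiv 0
\]
in $\Lie(U)$ modulo the appropriate higher filtration. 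The assumption $q_i-1\equiv pa_i\pmod{p^k}$ fixes $n_i=1+v_p(a_i)$ and fixes $u_i$ modulo a power of $p$.

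\textbf{Step 2: a criterion on the linking data.} I would use the criterion summarised in Table~\ref{tab:rk3 Algorithm}. For $\dim\Lie(U)\le 3$, I would run through the possible $\ZZ_p$-Lie algebras generated by $X_1,X_2,X_3$, namely the abelian ones, those of the form $\mathfrak{sl}_2$-type twisted by $p$-powers, and the solvable ones. The goal is to find an open condition on the matrix $(\ell_{ij}\bmod p^m)$, for some fixed $m$ depending on the $a_i$, that forces every $X_i$ to vanish. A typical such condition is that certain minors, for instance $\ell_{12}\ell_{23}\ell_{31}-\ell_{13}\ell_{32}\ell_{21}$ after suitable scaling, are units. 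Because uniform groups are torsion-free and $\Lie(U)$ is a lattice, the congruences can then be lifted to equalities that force $X_i=0$ by induction on the $p$-adic filtration. This case analysis is the main obstacle: higher-dimensional uniform quotients also have to be excluded. For those I would argue that the generating images already satisfy the rank-$3$ constraints on $\Lie(U)/p\Lie(U)$ and then apply a Labute-style $\FF_p$-argument at the graded level $\BS_{n}$.

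\textbf{Step 3: realise the linking data.} I would choose $q_1$ in a prescribed residue class modulo $p^{k}$. I would then choose $q_2\equiv 1+pa_2\pmod{p^k}$ with prescribed $p^m$-power residue behaviour modulo $q_1$, and conversely for $q_1$ modulo $q_2$. Finally I would choose $q_3$ with prescribed behaviour modulo $q_1$ and $q_2$ and prescribed images of $q_1,q_2$ modulo $q_3$. All of these are Chebotarev conditions in the compositum of $\QQ(\zeta_{p^k})$ with the Kummer extensions $\QQ(\zeta_{p^m},q_j^{1/p^m})$. The required conditions are compatible by linear disjointness of these fields for odd $p$, which is essentially the content of Theorem~\ref{thm:Linking diagram approximation by arithmetic}. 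Since each of them holds for a set of primes of positive density, there are infinitely many valid triples $S$.
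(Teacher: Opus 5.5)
Your architecture matches the paper's: Koch-type presentation, Bockstein congruences in $\Lie(U)$, and \v{C}ebotarev realisation of the linking data. Step~3 is fine over $\QQ$; note only that the weak presentation sees the $\ell_{ij}$ only modulo $p$, so $m=1$ suffices. The gap is Step~2, which carries the whole content of the theorem. You never exhibit, for an arbitrary valuation pattern of $(a_1,a_2,a_3)$, linking data for which the congruences provably exclude every nontrivial uniform quotient. Moreover, the sufficient condition you suggest is false. Your expression $\ell_{12}\ell_{23}\ell_{31}-\ell_{13}\ell_{21}\ell_{32}$ is exactly $\det L$, and Example~\ref{exmp:SL21(Zp) combinatorial data} shows that the uniform group $\SL_2^1(\ZZ_p)$ has invertible $L$ with all $v_p(a_i)=0$. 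So no condition of the form ``$L$ invertible'' can force $\Lie(U)=0$. By Proposition~\ref{prop:rkL = 3} one additionally needs $A_p^{-1}L$, after dividing all $a_i$ by $p^{\min_i v_p(a_i)}$, to be non-symmetric.

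Nor does torsion-freeness of $\Lie(U)$ let you lift congruences to higher powers of $p$. The congruence modulo $p^r$ of Proposition~\ref{prop:First r Bocksteins vanish} requires first proving $\dbb{\fLie,\fLie}\subseteq p^{r-1}\fLie$; otherwise there is an indeterminacy such as $p\dbb{\fLie,\fLie}$ (cf.\ Proposition~\ref{prop:Bockstein 2 relation}). In the rank-$3$ case this inclusion comes from invertibility of $L$ forcing $\fLie_p$ to be abelian.

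The obstacle you single out does not exist. A uniform quotient $U$ of the three-generated group $G_{\QQ,S}$ has $\dim U=d(U)\le 3$. Since $G_{\QQ,S}^{\rm ab}$ is finite, Proposition~\ref{prop:FAB uniform groups} gives $d(U)=3$ whenever $U\neq 1$. So $\Lie(U)$ is a three-dimensional FAb $\ZZ_p$-Lie algebra, which is precisely what makes Proposition~\ref{prop:FAb Lie algebra dim3} and Corollary~\ref{cor:Mod p reduction of FAb LieAlgebras dim3} usable.

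To close the gap, take for instance a directed $3$-cycle: $\ell_{12},\ell_{23},\ell_{31}\neq 0$ and $\ell_{13}=\ell_{21}=\ell_{32}=0$. Either orientation works, so reordering the vertices by valuation is harmless. Then $L$ is monomial, hence $\rk L=3$. Also $L_{ij}L_{ji}=0$ for all $i\neq j$, so $D^{-1}L$ is non-symmetric for every invertible diagonal $D$. Proposition~\ref{prop:rkL = 3} now covers every case:
\begin{itemize}
\item mixed valuation patterns are impossible;
\item the pattern $(0,0,0)$ is excluded by non-symmetry;
\item if all $v_p(a_i)>0$, one descends by $p^{\min_i v_p(a_i)}$ into one of the two cases above.
\end{itemize}
This works for all $a_i$ and is insensitive to the unit ambiguity in the linking numbers. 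Realising this zero pattern together with $q_i\equiv 1+pa_i\pmod{p^k}$ then proves the statement; you can do this by your Step~3, or by Lemma~\ref{lem:construction of extension with prescribed ramification} applied inductively. This is essentially the paper's argument via Table~\ref{tab:rk3 Algorithm} and Theorem~\ref{thm:Linking diagram approximation by arithmetic}.
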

Section~\ref{ssec:Statistics on uFM property} presents some numerical experiments that indicate that these conditions occur with very high frequency and, heuristically, with probability approach $1$ as $p$ increases.

Since in the case $|S|=3$ the Golod--\v{S}afarevi\v{c} inequality is not directly applicable, it is unclear a priori whether the groups $G_{K,S}$ are infinite and thus would satisfy the uFM property for trivial reasons. In fact it follows from an argument by I.V.~ Andozhskij and V.M.~Tsvetkov \cite{AndozhskijTsvetkov1975} that a large portion of the groups $G_{\QQ,S}$ with $|S|=3$ is finite (see for example \cite{Luo2024}*{Thm. 7.13}). See Remark~\ref{rem:why our method is need for infinity} for a more detailed discussion. Using the method from Section~\ref{ssec:FAb groups with three generators} we construct triples of primes $S$, such that $G_{K,S}$ satisfies the uFM property and is infinite. To that end we use a strategy similar to the one by F.~Hajir and C.~Maire in \cite{HajirMaire2002} to construct infinite unramified extensions. 
\begin{mainthm}[Theorem~\ref{thm:Infinite and uFM GS}]
    For a number field $K$ and an odd prime $p$ unramified in $K/\QQ$ with $h_p(K)=1$ there are infinitely many triples of primes $S=\{v_1,v_2,v_3\}$ such that $G_{K,S}$ has the uFM property and $G_{K,S}$ is infinite.
\end{mainthm}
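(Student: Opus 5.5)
The plan is to construct the triples $S=\{v_1,v_2,v_3\}$ so that two independent sufficient conditions hold at once. Infiniteness will come from a Golod--\v{S}afarevi\v{c} argument applied to an open subgroup, in the spirit of Hajir--Maire \cite{HajirMaire2002}. The uFM property will come from the three-generator criterion of Section~\ref{ssec:FAb groups with three generators}, summarised in Table~\ref{tab:rk3 Algorithm}. Both conditions are governed by Chebotarev-type splitting conditions on the primes $v_i$, and the task is to show these can be imposed simultaneously. By Theorem~\ref{thm:Linking diagram approximation by arithmetic}, the relevant invariants of $G_{K,S}$ are determined modulo suitable powers of $p$ by splitting data. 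These invariants are the exponents $v_p(N(v_i)-1)$ and the linking numbers/Frobenius symbols giving the relations $x_i^{N(v_i)-1}[x_i^{-1},y_i]$ modulo $(G')^p[G',G]$. Each such condition is therefore a Chebotarev condition in an explicit finite extension of $K$ of the form $K(\zeta_{p^k}, \sqrt[p]{\cdot})$. Since $h_p(K)=1$ and $p$ is unramified in $K$, the set $S$ is admissible, and $G_{K,S}$ has exactly three generators when the $v_i$ have norm $\equiv 1 \pmod p$.

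\emph{Step 1 (uFM).} Fix a linking diagram on three vertices for which the algorithm of Table~\ref{tab:rk3 Algorithm} certifies uFM for every choice of exponents $N(v_i)-1\equiv pa_i \pmod{p^k}$. For instance, take a cyclic diagram in which each $v_{i+1}$ is not a $p$-th power modulo $v_i$, so that the higher Bockstein differentials force the Lazard Lie algebra of any uniform quotient to vanish. This is exactly the input of Theorem~\ref{thm:realization of uFM groups}, so I would invoke that theorem, or rather its proof, to produce infinitely many such $S$. These $S$ depend on prescribing a diagram and congruences, and the prescription is open in the Chebotarev sense.

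\emph{Step 2 (infiniteness).} I would follow Hajir--Maire and choose an auxiliary cyclic degree-$p$ extension $L/K$ ramified only at (some of) the $v_i$. The natural candidate is the subfield of the ray class field cut out by $v_1$. I would then arrange that $v_2$ and $v_3$ split completely in $L$, which is one more Chebotarev condition compatible with the linking data of Step 1. Then $G_{L,S_L}$ is an open subgroup of $G_{K,S}$ ramified at $2p+1$ or more places. If in addition the chosen places generate a large enough part of the relevant groups, the generator rank $d(G_{L,S_L})$ grows linearly in $p$, by Shafarevich's formula, while the relation rank grows at most comparably. Arranging $r<d^2/4$ then makes $G_{L,S_L}$, hence $G_{K,S}$, infinite by Golod--\v{S}afarevi\v{c}. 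Concretely, the rank estimate $d(G_{L,S_L})\ge |S_L| - (r_1(L)+r_2(L)) -1 + \delta$ and $r(G_{L,S_L})\le d(G_{L,S_L})+r_1+r_2$ require $|S_L|$ to be large relative to $[L:\QQ]$. If the three places alone do not suffice, I would pass to a tower of two degree-$p$ steps, or impose that $v_2,v_3$ split completely in a larger auxiliary field.

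\emph{Main obstacle.} The hard step is reconciling Step 2 with Step 1. Splitting $v_2$ and $v_3$ in $L$ is itself a linking-number condition: it says $v_2,v_3$ are $p$-th powers modulo $v_1$ in an appropriate sense. This kills edges of the linking diagram, whereas Step 1 needs certain edges to be nonzero. I would therefore first search, within the method of Section~\ref{ssec:FAb groups with three generators}, for a diagram whose nonvanishing edges avoid those forced to vanish by the splitting. For example, I would keep only edges out of $v_1$ and use the higher congruences $v_p(N(v_i)-1)$ rather than linking edges into $v_1$ to kill the Lie algebra. I would then verify via Theorem~\ref{thm:Linking diagram approximation by arithmetic} that the combined conditions form a nonempty Chebotarev set, giving infinitely many triples.
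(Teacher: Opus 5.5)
\paragraph{Gap 1: the uFM certificate.} Your overall architecture matches the paper's. You choose $v_1$ carrying a cyclic $p$-power extension $L/K$ ramified only at $v_1$, make $v_2,v_3$ split completely in $L$, apply Golod--\v{S}afarevi\v{c} to the open subgroup $G_{L,\bar S}$, and take uFM from the three-generator criterion. But the proposal stops exactly at what you call the main obstacle. You never produce linking data that is both compatible with the splitting and certified by Table~\ref{tab:rk3 Algorithm}, and ``search for a diagram'' is not an argument. The cyclic diagram of Step~1 is actually excluded: every vertex of a $3$-cycle has an incoming edge, and $\ell(v_2,v_1)\neq 0$ or $\ell(v_3,v_1)\neq 0$ contradicts that vertex splitting in $L$.

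The missing idea is that no edges are needed at all. Take every $\ell(v_i,v_j)=0$ and impose only the norm condition $v_p(N(v_2)-1)=1$, i.e.\ $a_2\in\ZZ_p^\times$. Then \eqref{eq:Bockstein 1 relation} for $\rho_{v_2}$ reads $a_2e_2\equiv 0\pmod p$, so $e_2\in p\Lie$ and $d(U)\le 2$. Since $U$ has finite abelianization, it is trivial by Proposition~\ref{prop:FAB uniform groups}; this is the $\rk L=0$ case of Lemma~\ref{lem:rank and valuation relation}. Vanishing linking numbers are compatible with the splitting of $v_2,v_3$ in $L$, and partly forced by it. So the two requirements decouple completely, which is exactly what the paper does. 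Your fallback with edges only out of $v_1$ would also work by the same lemma, provided some vertex other than $v_1$ has unit $a$, but you would have to say and verify this.

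\paragraph{Gap 2: the Golod--\v{S}afarevi\v{c} count.} This is not correct as stated. Your bound $d(G_{L,S_L})\ge |S_L|-r_L+O(1)$ is useless once $r_K\ge 2$. Indeed $|S_L|=1+2[L:K]$ while $r_L=[L:K]\,r_K$, so $|S_L|$ is never large relative to $r_L$, however many degree-$p$ steps you take.

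What makes the count work is the $\B$-term. Impose, as a further \v{C}ebotarev condition, that $\B_{\bar S}(L)=\B_{\{w_1\}}(L)$. Since $L/K$ is totally ramified at a single place and $h_p(K)=1$, you get $h_p(L)=1$; also $\zeta_p\notin L$. Hence $\dim\B_{\bar S}(L)=r_L-1-\delta$ with $\delta\in\{0,1\}$. For $[L:K]=p^k$, Corollary~\ref{cor:generator and relation rank of GS} then gives
\[
d(G_{L,\bar S})=1+2p^k-\delta, \qquad r(G_{L,\bar S})\le (2+r_K)p^k-\delta,
\]
and $d^2/4>r$ once $p^k\ge r_K+1$.

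So $[L:K]$ must be $p^k$ with $k$ depending on $r_K$. For such an $L$ to exist, $v_1$ must be chosen via Gras' criterion \cite{Gras2003} to split completely in $K(\zeta_{p^k})$ and a suitable Kummer extension of it.

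Finally, having three generators and a Koch-type presentation does not follow from $h_p(K)=1$ alone. You also need $\B_S(K)=\B_\emptyset(K)$, which is one more \v{C}ebotarev condition on the $v_i$.
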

Even though the method we develop in Section~\ref{ssec:FAb groups with three generators} is not directly applicable if $|S|>3$, we show in Theorem~\ref{thm:additonal d-2 splitting} and Corollary~\ref{cor:One Splitting for S=4} that by introducing a well-chosen small set of additional places $T$ one can conclude the uFM property for $G_{K,S}^T$ instead of $G_{K,S}$.

\subsection*{Structure of the paper}
Section~\ref{sec:GS and Koch groups} recalls the arithmetic background and abstract Koch presentations. Sections~\ref{sec:Bockstein spectral sequences} and \ref{sec:Uniform quotients} introduce the Bockstein spectral sequence techniques and derive congruence criteria for uniform quotients. Section~\ref{sec:Koch type groups with three generators} applies these methods to groups with three generators, proving the main arithmetic results. Next in Section~\ref{sec:Additional splitting} we study the uFM property, with additional splitting conditions. Section~\ref{sec:numerical experiments} discusses statistics for linking numbers and $G_{K,S}$ with three generators, that satisfy the uFM property.

\section{The groups \texorpdfstring{$G_{K,S}$}{GKS} and Koch-type groups}
\label{sec:GS and Koch groups}
Let $K$ be a number field, $p$ a prime and $S$ a finite set of \emph{tame} places of $K$, i.e., $N(\mathfrak{q})\equiv 1\pmod p$ for all $\mathfrak{q}\in S$. Denote by $h_p(K)$ the maximal $p$-power dividing the class number of $K$. We define $K_S(p)/K$ to be the maximal pro-$p$ extension of $K$ unramified outside $S$ and by $G_{K,S}$ its Galois group. It coincides with $\pi_1^{\rm \acute{e}t}(\Spec(\mathcal{O}_{K,S}))(p)$. The main purpose of this section is to recall some basic facts about $G_{K,S}$ and to abstract these properties to the realm of pro-$p$ groups.
\subsection{The Galois Cohomology of \texorpdfstring{$G_{K,S}$}{GKS}}
\label{ssec:The Galois Cohomology of GS}
All the following results are classical and can be found in~\cites{Koch2002,NSW2000}. Fix a number field $K$ and an odd prime $p$. {\color{black}The situation is more technical for $p=2$ and since we will only be interested in $p$ odd in what follows, we only consider this case.} For two finite sets of places $S$, $T$ (not necessarily disjoint), one defines
\begin{align*}
    V^T_S(K)\coloneq \{a\in K^\times: a_v\in K_v^{\times p}\text{ for }v\in S\text{ and }p\mid w(a)\text{ for all }w\not\in T\}/K^{\times p}.
\end{align*}
The Pontryagin dual of $V_S^T(K)$ will be denoted by $\B_S^T(K)$. For $S\subseteq S'$ the natural map $\B_S^T\to \B_{S'}^T$ is surjective and hence from the exact sequence
\begin{align*}
    0\to \mathcal{O}_{K,T}^\times/\mathcal{O}_{K,T}^{\times p}\to V_{\emptyset}^T\to \cl^T(K)[p]\to 0
\end{align*}
it follows that $\B_S^T(K)$ is finite for all finite $S$ and $T$. If either $S$ or $T$ are empty we suppress it in the notation. 

Lemma~10.7.4 of \cite{NSW2000} yields the following exact sequence
\begin{align}
\label{eq:generators of G_KS sequence}
    0\to \rmH^1(G_{K,\emptyset},\FF_p)\to \rmH^1(G_{K,S},\FF_p)\to \bigoplus_{v\in S}\rmH^1(\mathcal{T}_v,\FF_p)^{\mathcal{G}_v}\to \B_\emptyset(K)\to \B_S(K)\to 0
\end{align}
where $\mathcal{T}_v$ is the inertia group of $\mathcal{G}_v=\Gal(\overline{K}_v/K_v)$. Note that $\rmH^1(G_{K,\emptyset},\FF_p)$ is by class field theory isomorphic to $\Hom(\cl(K),\FF_p)$ and therefore trivial if $h_p(K)=1$. In this situation, the exact sequence implies, that $G_{K,S}$ is generated by the inertia groups of the $v\in S$.

Denote by $\Sha^2(G_{K,S})$ the kernel of the map $\rmH^2(G_{K,S},\FF_p)\to \prod_{v\in S}\rmH^2(\mathcal{G}_v,\FF_p)$. Then the groundbreaking result due to H.~Koch is, that $\Sha^2(G_{K,S})$ embeds into $\B_S(K)$. Therefore all relations in $G_{K,S}$ are coming from local ones provided $\B_S(K)=0$.

From that one deduces the following corollary, which can be found in a more general form as \cite{NSW2000}*{Cor. 10.7.7}. We present it in that way, since we are only interested in this particular case.
\begin{cor}
\label{cor:generator and relation rank of GS}
    Let $K$ be a number field {\color{black} with signature $(r_1,r_2)$ and $r\coloneq r_1+r_2$}, $p$ an odd prime such that $\zeta_p\not \in K$ and $S$ be a set of tame primes, then 
    \begin{align*}
        d(G_{K,S})=1+|S|+\dim_{\FF_p}\B_S(K)-r\quad \text{and}\quad 
        r(G_{K,S})\leq |S|+\dim_{\FF_p}\B_S(K).
    \end{align*}
\end{cor}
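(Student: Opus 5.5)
We derive both formulas from the exact sequence \eqref{eq:generators of G_KS sequence} and a global Euler characteristic computation. The generator rank is read off from that sequence. The relation rank is bounded using Koch's embedding $\Sha^2(G_{K,S})\hookrightarrow \B_S(K)$ together with local computations.

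\textbf{Generator rank.} From \eqref{eq:generators of G_KS sequence} we have
\begin{align*}
d(G_{K,S})=\dim\rmH^1(G_{K,\emptyset},\FF_p)+\sum_{v\in S}\dim \rmH^1(\mathcal{T}_v,\FF_p)^{\mathcal{G}_v}-\dim\B_\emptyset(K)+\dim\B_S(K).
\end{align*}
The terms are evaluated as follows.
\begin{enumerate}[(i)]
\item For $v$ tame, $\mathcal{T}_v$ acts through tame inertia. Since $N(v)\equiv 1\pmod p$, the $\mathcal{G}_v$-invariants of $\Hom(\mathcal{T}_v,\FF_p)$ are one-dimensional. Hence each $v\in S$ contributes $1$.
\item By class field theory, $\dim\rmH^1(G_{K,\emptyset},\FF_p)=\dim \cl(K)/p$.
\item From the displayed sequence for $V_\emptyset=V^\emptyset_\emptyset$ we get $\dim\B_\emptyset(K)=\dim \mathcal{O}_K^\times/p+\dim\cl(K)[p]$. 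Since $\zeta_p\notin K$, Dirichlet's unit theorem gives $\dim\mathcal{O}_K^\times/p=r-1$.
\end{enumerate}
Because $\cl(K)$ is finite, $\dim\cl(K)/p=\dim \cl(K)[p]$, so the class group terms cancel. This leaves $d(G_{K,S})=|S|-(r-1)+\dim\B_S(K)$, as claimed.

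\textbf{Relation rank.} Recall $r(G_{K,S})=\dim\rmH^2(G_{K,S},\FF_p)$. From the definition of $\Sha^2$ there is an exact sequence
\begin{align*}
0\to\Sha^2(G_{K,S})\to\rmH^2(G_{K,S},\FF_p)\to\bigoplus_{v\in S}\rmH^2(\mathcal{G}_v,\FF_p).
\end{align*}
We use Poitou--Tate for the restricted ramification group; since $p$ is odd, real places contribute nothing. This identifies the cokernel of the last map with a quotient of $\rmH^0(G_{K,S},\mu_p)^\vee$, which vanishes because $\zeta_p\notin K$. Consequently
\begin{align*}
r=\dim\Sha^2+\sum_{v\in S}\dim\rmH^2(\mathcal{G}_v,\FF_p).
\end{align*}
By local duality $\dim\rmH^2(\mathcal{G}_v,\FF_p)=\dim\mu_p(K_v)=1$, since $v$ is tame. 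By Koch's result $\dim\Sha^2\le\dim\B_S(K)$. Together these give $r(G_{K,S})\le|S|+\dim\B_S(K)$.

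The main subtlety is this relation-rank step: we must justify the local contribution count correctly, which needs the cokernel vanishing from $\zeta_p\notin K$ or at least the inequality. Since only an inequality is asserted, it suffices that $\rmH^2(G_{K,S})/\Sha^2$ injects into $\bigoplus_{v\in S}\rmH^2(\mathcal{G}_v)$. That injection holds by definition of $\Sha^2$, so the bound follows without any surjectivity.
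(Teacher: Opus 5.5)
Your proof is correct and follows the route the paper indicates: the generator rank comes from the alternating dimension count in \eqref{eq:generators of G_KS sequence}, where the class-group terms cancel and $\dim\mathcal{O}_K^\times/p=r-1$ because $\zeta_p\notin K$, and the relation bound comes from Koch's embedding $\Sha^2(G_{K,S})\hookrightarrow\B_S(K)$ together with $\dim\rmH^2(\mathcal{G}_v,\FF_p)=1$ for tame $v$. You should delete the Poitou--Tate step: it is not available for $G_{K,S}$ when $S$ contains no places above $p$, so the asserted equality $r(G_{K,S})=\dim\Sha^2(G_{K,S})+|S|$ is unjustified, but as you note yourself, the injection $\rmH^2(G_{K,S},\FF_p)/\Sha^2(G_{K,S})\hookrightarrow\bigoplus_{v\in S}\rmH^2(\mathcal{G}_v,\FF_p)$ already gives the inequality.
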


\subsection{Abstract Koch-Type presentations}
\label{ssec:Abstract Koch Presentation}
In favorable situations, the groups $G_{K,S}$ admit quite explicit presentations. The central result is due to H.~Koch (see \cites{Koch1966,Koch2002}). Influenced by an analogy between the groups $G_{K,S}$ and fundamental groups of knot complements, J.~Labute introduced in \cite{Labute2006} so-called \emph{linking diagrams} to present combinatorial data associated to a tame set of primes $S$ encoding crucial information about the presentation of the group $G_{K,S}$.

We refine his notion of linking diagram since we need more combinatorial data for our approach.
\begin{defn}
\label{defn:Linking diagram}
    Fix a prime $p$. A \emph{linking diagram} $\Lambda$ is a triple $(\Gamma,a,\ell)$, where $\Gamma=(V,E)$ is a directed graph, $a=(a_v)_{v\in V}$ a sequence of non-zero elements in $\ZZ_p$ and $\ell:V\times V\to \FF_p$ such that $\ell(v,w)=0$ iff $(v,w)\not\in E$ (in particular if $v=w$).

    Two linking diagrams $\Lambda=(\Gamma,a,\ell)$ and $\Lambda'=(\Gamma',a',\ell')$ are considered to be equivalent if there is an isomorphism $\varphi:\Gamma\to \Gamma'$ and $c_v,d_v\in \ZZ_p^\times$ such that $c_va_v=a'_{\varphi(v)}$ and 
    \begin{align*}
        c_vd_w\ell(v,w)=\ell'(\varphi(v),\varphi(w))\qquad \text{for all }w\in V.
    \end{align*}
\end{defn}
We represent linking diagrams $\Lambda=(\Gamma,a,\ell)$ as follows, illustrated for a fully connected graph with three vertices $u,v,w$.
\begin{center}
    \begin{tikzpicture}
        \ldVertex{v}{90:1.5}{$v$}{$a_v$}
        \ldVertex{w}{200:1.5}{$w$}{$a_w$}
        \ldVertex{u}{340:1.5}{$u$}{$a_u$}
        \ldEdge{v}{w}{$\ell(v,w)$}{bend right=1.5cm}
        \ldEdge{w}{v}{$\ell(w,v)$}{bend right=.5cm}
        \ldEdge{u}{w}{$\ell(u,w)$}{bend right=.5cm}
        \ldEdge{w}{u}{$\ell(w,u)$}{bend right=1.5cm}
        \ldEdge{v}{u}{$\ell(v,u)$}{bend right=.5cm}
        \ldEdge{u}{v}{$\ell(u,v)$}{bend right=1.5cm}
    \end{tikzpicture}
\end{center}
\begin{defn}
\label{defn:groups represented by linking diagrams}
    Let $\Lambda=(\Gamma,a,\ell)$ be a linking diagram and $F$ the free pro-$p$ group  on generators $x_v$ for $v\in V(\Gamma)$. 
    
    A pro-$p$ group $G$ is said to be \emph{weakly presented by $\Lambda$} if there exists an epimorphism $\pi:F\twoheadrightarrow G$, such that $\pi(x_v)$ is a minimal set of generators of $G$ and for each $v\in V(\Gamma)$ there exist $\rho_v\in \ker(\pi)$ such that 
    \begin{align*}
        \rho_v\equiv x_v^{pa_v}\prod_{w\in V(\Gamma)}[x_v,x_w]^{\ell(v,w)}\pmod{P_3^*(F)}.
    \end{align*}
    We say that $G$ is \emph{presented by $\Lambda$}, if $\ker(\pi)$ is equal to the closed normal subgroup generated by the $\rho_v$, i.e., $G\cong \langle x_v\mid \rho_v\rangle $.
    
    A pro-$p$ group is said to have a \emph{(weak) Koch-type presentation} if it is (weakly) presented by some linking diagram $\Lambda$.
\end{defn}
\begin{rem}
    Note that there are many pro-$p$ groups presented by the same linking diagram. Furthermore, if $\Lambda$ and $\Lambda'$ are equivalent and $G$ is (weakly) presented by $\Lambda$, then it is also (weakly) presented by $\Lambda'$.
\end{rem}
\begin{con}
    Let $p$ be a prime number and $K$ a number field such that $h_p(K)=1$. Given a set of tame places $S$ such that $\B_S=\B_\emptyset$, then for each $v\in S$, there exists a cyclic extension $L(v)/K$ of degree $p$ unramified exactly at $v$ such that $K_S^{\rm el,ab}$ is the compositum of all $L(v)$ (this follows either from \cite{Koch2002}*{Thm. 11.7} or \cite{NSW2000}*{Lem. 10.7.4 (i)}). 

    For each $v\in S$ fix a generator $\tau_v$ of $\Gal(L(v)/K)$ and for $v\neq w\in S$ we define $\ell(v,w)\in \FF_p$ by
    \begin{align*}
        (v,L(w)/K)=\tau_v^{-\ell(v,w)}\in \Gal(L(w)/K).
    \end{align*}
    Let $\Gamma_S=(S,E_S)$ with $E_S\coloneq \{(v,w)\mid \ell(v,w)\neq 0\}$, and set $(a_S)_v\coloneq(N(\mathfrak{q}_v)-1)/p$. Then $\Lambda_{K,S}\coloneq(\Gamma_S,a_S,\ell)$ is the linking diagram associated to $K$ and $S$.

    Note that a different choice of generators $\tau_v'$ yields an equivalent linking diagram.
\end{con}
\begin{exmp}
    For $K=\QQ$ and $p=3$ consider $S=\{13,19,37,109\}$. With respect to the primitive roots $2$, $10$, $19$ and $51$ we get the following linking diagram:
    \begin{center}
        \begin{tikzpicture}[]
            \node at (-1.5,1) {$\Lambda_{\QQ,S}=$};
            \ldVertex{13}{0,2}{13}{4};
            \ldVertex{19}{0,0}{19}{6};
            \ldVertex{37}{2,0}{37}{12};
            \ldVertex{109}{2,2}{109}{36};
            
            \ldEdge{109}{19}{1}{%bend left= 1cm
            };
            \ldEdge{13}{19}{2}{bend right=1.75cm};
            \ldEdge{13}{37}{2}{bend right=1cm};
            \ldEdge{13}{109}{2}{bend left=.5cm};
            \ldEdge{19}{13}{1}{bend left=.5cm};
            \ldEdge{19}{37}{2}{bend right=.5cm};
            \ldEdge{37}{13}{2}{bend right=1cm};
            \ldEdge{37}{109}{1}{bend right=1.75cm};
            
            \ldEdge{109}{37}{1}{bend left= .5cm};
            
        \end{tikzpicture}
    \end{center}
\end{exmp}
The following theorem relates the linking diagrams we defined above to the groups $G_{K,S}$ and their representations.
\begin{thm}[Koch \cite{Koch2002}]
    Let $p$ be an odd prime number, $K$ a number field with $h_p(K)=1$, and $S$ a set of tame places such that $\B_S=\B_\emptyset$. Then $G_{K,S}$ is weakly presented by $\Lambda_{K,S}$. Furthermore, if $\B_\emptyset=0$, i.e., if $K=\QQ$ or $K$ is imaginary quadratic, then $G_{K,S}$ is presented by $\Lambda_{K,S}$.
\end{thm}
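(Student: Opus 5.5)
We follow Koch's method. First we determine the generators of $G_{K,S}$. Since $h_p(K)=1$ we have $\rmH^1(G_{K,\emptyset},\FF_p)=0$. The hypothesis $\B_S=\B_\emptyset$ makes the map $\B_\emptyset(K)\to\B_S(K)$ in \eqref{eq:generators of G_KS sequence} an isomorphism. Hence
\begin{align*}
\rmH^1(G_{K,S},\FF_p)\cong\bigoplus_{v\in S}\rmH^1(\mathcal{T}_v,\FF_p)^{\mathcal{G}_v},
\end{align*}
and each summand is one-dimensional because $v$ is tame. Dually, $K_S^{\rm el,ab}$ is the compositum of the fields $L(v)$, as in the construction of $\Lambda_{K,S}$. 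For each $v\in S$ we choose a prolongation of $v$ to $K_S(p)$ and let $\sigma_v,\tau_v\in G_{K,S}$ be the images of a Frobenius lift and a tame inertia generator of the local group $\mathcal{G}_v(p)$. We normalize $\tau_v$ so that it maps to the chosen generator of $\Gal(L(v)/K)$. The images of the $\tau_v$ in $G_{K,S}/\Phi(G_{K,S})$ form a basis, so by Burnside's basis theorem $\pi\colon x_v\mapsto\tau_v$ is an epimorphism $F\to G_{K,S}$ sending the generators to a minimal generating set.

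Next we produce the relations. In $\mathcal{G}_v(p)=\langle\sigma,\tau\mid\tau^{N(\mathfrak q_v)-1}[\tau^{-1},\sigma^{-1}]\rangle$ we have the relation $\tau_v^{N(\mathfrak q_v)-1}[\tau_v^{-1},\sigma_v^{-1}]=1$. We write $\sigma_v=\pi(y_v)$ for some $y_v\in F$. Modulo $\Phi(F)$ the element $y_v$ is determined by the image of $\sigma_v$ in $G_{K,S}^{\rm el,ab}=\prod_w\Gal(L(w)/K)$. That image is the Artin symbol $(v,L(w)/K)$ in the $w$-th component, so $y_v\equiv\prod_{w\neq v}x_w^{-\ell(v,w)}\pmod{\Phi(F)}$. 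In the $v$-th component the class is arbitrary, but that contribution is harmless because $[x_v,x_v]=1$. We set
\begin{align*}
\rho_v:=x_v^{N(\mathfrak q_v)-1}[x_v^{-1},y_v^{-1}]\in\ker\pi.
\end{align*}
Using bilinearity of commutators modulo $P_3^*(F)$, we compute
\begin{align*}
[x_v^{-1},y_v^{-1}]\equiv[x_v,y_v]\equiv\prod_w[x_v,x_w]^{-\ell(v,w)}\pmod{P_3^*(F)},
\end{align*}
where the $p$-th power parts of $y_v$ contribute only to $P_3^*$. Also $x_v^{N(\mathfrak q_v)-1}=x_v^{pa_v}$. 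This gives the required congruence, up to the sign convention for $\ell$; the sign is absorbed either by the definition $(v,L(w)/K)=\tau_v^{-\ell(v,w)}$ or by replacing $\tau$ with $\tau^{-1}$. This yields an equivalent diagram, and by the remark following Definition~\ref{defn:groups represented by linking diagrams} equivalent diagrams give the same weak presentations. The main bookkeeping obstacle lies here: getting the signs and the index convention right, and checking that the terms we ignored really do lie in $P_3^*(F)$.

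Finally, for the presentation statement we assume $\B_\emptyset=0$. Then $\B_S=0$, so $\Sha^2(G_{K,S})=0$ by Koch's embedding. Hence the restriction map $\rmH^2(G_{K,S},\FF_p)\to\prod_v\rmH^2(\mathcal{G}_v,\FF_p)$ is injective, and by Corollary~\ref{cor:generator and relation rank of GS} we get $r(G_{K,S})\le|S|=d(G_{K,S})$. Let $R$ be the closed normal subgroup generated by the $\rho_v$, and consider the surjection $F/R\to G_{K,S}$. The elements $\rho_v$ are precisely the images of the local relators, and each local $\rmH^2$ is detected by the corresponding local relator. The standard transgression argument then shows that the classes of the $\rho_v$ span $R\cap\ker\pi/(\ker\pi)^p[\ker\pi,F]$, i.e. they generate $\ker\pi$ as a normal subgroup. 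In other words, the injectivity of $\rmH^2(G_{K,S})\to\bigoplus_v\rmH^2(\mathcal{G}_v)$ makes the dual map $\bigoplus_v\rmH_2(\mathcal{G}_v)\to\ker\pi/(\ker\pi)^p[\ker\pi,F]$ surjective. Hence $\ker\pi=R$ and $G_{K,S}\cong\langle x_v\mid\rho_v\rangle$.
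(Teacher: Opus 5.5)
Your proposal is correct and is essentially Koch's argument, which is all the paper relies on (it gives no proof of its own, only the citation). The steps match Koch's: the inertia generators together with Frobenius lifts give a minimal generating set, the tame local relators $\tau^{q-1}[\tau^{-1},\sigma^{-1}]$ pulled back to $F$ give the $\rho_v$, and $\Sha^2(G_{K,S})\hookrightarrow\B_S=0$ together with naturality of transgression shows these $\rho_v$ span $\ker\pi/(\ker\pi)^p[\ker\pi,F]$. Two small points: depending on the commutator convention your congruence comes out with $-\ell(v,w)$, which is absorbed by the equivalence with $c_v=1$, $d_w=-1$ (replace $x_w$ by $x_w^{-1}$ and $\rho_v$ by $\rho_v^{-1}$); and ``$R\cap\ker\pi$'' should simply read $\ker\pi$.
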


\subsection{Realization of linking numbers}
\label{ssec:Realization of linking numbers}
In \cite{Labute2006} J.~Labute asked whether each possible choice of linking numbers is realizable by a tame set of places. This question was recently answered by the author together with O.~Hamza and D.~Lim in \cite{FeuerpfeilHamzaLim2026}. We refine the statement here in terms of linking diagrams from Definition~\ref{defn:Linking diagram}, which carry the sequence $a$ as additional piece of data.
\begin{thm}
\label{thm:Linking diagram approximation by arithmetic}
    Let $p$ be an odd prime, $\Lambda$ be linking diagram and $K$ a number field, not containing $\zeta_p$ with $h_p(K)=1$ and $p$ unramified in $K/\QQ$. Then there exists a set of tame places $S$ of $K$ such that $\Lambda$ is equivalent to $\Lambda_{K,S}$. Moreover, such a set of places is given by \v{C}ebotarev classes and therefore they have a positive density.
\end{thm}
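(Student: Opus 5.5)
We construct the places of $S$ one at a time, each determined by a Čebotarev condition in a suitable governing field. The goal is that the Frobenius data controlling $\Lambda_{K,S}$ — the residue norms $N(\mathfrak{q}_v)$ and the Artin symbols $(v,L(w)/K)$ — take prescribed values.

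Enumerate the vertices $v_1,\dots,v_d$ of $\Gamma$. For each $v$ write $a_v=p^{m_v}u_v$ with $u_v\in\ZZ_p^\times$. Since equivalence of linking diagrams allows rescaling $a_v$ by units, we only need $v_p(N(\mathfrak{q}_v)-1)=1+m_v$ exactly. This is the condition that $\mathfrak{q}_v$ splits completely in $K(\zeta_{p^{1+m_v}})$ but not in $K(\zeta_{p^{2+m_v}})$. Because $p$ is unramified in $K/\QQ$, we have $[K(\zeta_{p^n}):K]=(p-1)p^{n-1}$, so these conditions describe a nonempty union of Frobenius classes.

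Inductively, suppose $\mathfrak{q}_1,\dots,\mathfrak{q}_{i-1}$ have been chosen. Since $h_p(K)=1$, after we also impose $\B_{S}=\B_\emptyset$, each $\mathfrak{q}_j$ admits the cyclic degree-$p$ extension $L(\mathfrak{q}_j)$, ramified only at $\mathfrak{q}_j$. This extension lies in the ray class field mod $\mathfrak{q}_j$. We choose $\mathfrak{q}_i$ with the following properties.
\begin{enumerate}[(i)]
\item Its Frobenius in each $L(\mathfrak{q}_j)$, $j<i$, equals $\tau_j^{-\ell(v_i,v_j)}$; this prescribes $\ell(v_i,v_j)$.
\item It satisfies the cyclotomic condition above.
\item Its Frobenius in the governing field $K(\zeta_p,\sqrt[p]{V_\emptyset})$ is chosen so that $\B_{S}=\B_\emptyset$ persists; concretely, the elements of $V_{\emptyset}$ should not become local $p$-th powers at $\mathfrak{q}_i$.
\item It prescribes the reverse symbols $\ell(v_j,v_i)$, i.e.\ the Frobenius of the old $\mathfrak{q}_j$ in the new field $L(\mathfrak{q}_i)$.
\end{enumerate}
Condition (iv) is handled through the $p$-th power residue symbol: up to normalization, $(\mathfrak{q}_j,L(\mathfrak{q}_i)/K)$ is governed by whether a generator of $\mathfrak{q}_j$ (adjusted by units) is a $p$-th power modulo $\mathfrak{q}_i$. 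By reciprocity-free Kummer theory this is a splitting condition of $\mathfrak{q}_i$ in $K(\zeta_{p^{1+m}},\sqrt[p]{\pi_j})$.

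All these conditions live in the compositum $M_i$ of the following fields: the $L(\mathfrak{q}_j)$, $K(\zeta_{p^{N}})$, and the Kummer extensions $K(\zeta_p,\sqrt[p]{V_\emptyset},\sqrt[p]{\pi_1},\dots,\sqrt[p]{\pi_{i-1}})$. Čebotarev then gives a positive density of admissible $\mathfrak{q}_i$, provided the prescribed Frobenius data are \emph{compatible}, that is, provided they come from a single element of $\Gal(M_i/K)$.

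The main obstacle is exactly this independence of the constituent fields. Several facts are needed:
\begin{itemize}
\item the $L(\mathfrak{q}_j)$ are ramified at $\mathfrak{q}_j$, so they are linearly disjoint from the fields unramified there;
\item the Kummer extension must be linearly disjoint from the cyclotomic tower, which uses $\zeta_p\notin K$ and $p$ unramified;
\item the elements $\pi_j$ must be independent modulo $V_\emptyset K^{\times p}$, which follows from their valuations at the distinct $\mathfrak{q}_j$.
\end{itemize}
A subtle point is that the exact-valuation condition on $N(\mathfrak{q}_v)-1$ must be compatible with choosing $\sqrt[p]{\pi_j}$-Frobenius freely. Here the key fact is that $K(\zeta_{p^\infty})\cap K(\zeta_p,\sqrt[p]{\pi_j},\ldots)=K(\zeta_p)$. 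I would verify this with a degree computation, exactly as in \cite{FeuerpfeilHamzaLim2026}, and this is where I would invoke that paper's realization argument, adding only the cyclotomic layer.

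Once compatibility holds, the set of admissible $\mathfrak{q}_i$ is a union of Čebotarev classes and so has positive density. This completes the induction, yields $\Lambda_{K,S}\simeq\Lambda$, and gives the density statement.
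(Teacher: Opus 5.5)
Your architecture matches the paper's. The theorem is proved by adding one place at a time via Lemma~\ref{lem:construction of extension with prescribed ramification}, whose conditions (1), (2), (3)--(4), (5) correspond to your (ii), (i), (iv), (iii). The paper gets the reverse symbols from the Maire--Sankara refinement of Gras--Munnier. You instead compute them directly through the ray class group mod $\mathfrak q_i$ and $p$-th power residue symbols of $\pi_j$ (take $\mathfrak q_j^h=(\pi_j)$ with $p\nmid h$, since $\mathfrak q_j$ need not be principal). Given $h_p(K)=1$, this is a legitimate and more explicit substitute. As written, however, three steps are wrong.

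\emph{First}, the reduction to $v_p(N(\mathfrak q_v)-1)=1+m_v$ is false. In the definition of equivalence the \emph{same} unit $c_v$ rescales $a_v$ and the whole row $\ell(v,\cdot)$; only the column factors $d_w$ (the choice of $\tau_w$) are free. For distinct $u,v,w$ with $v_p(a_u)=v_p(a_v)$ and $\ell(u,w)\ell(v,w)\neq 0$, the class of $\ell(u,w)\ell(v,w)^{-1}(a_v/a_u)$ in $\FF_p^\times$ is an invariant. So leaving the unit part of $(N(\mathfrak q_v)-1)/p$ uncontrolled realizes the wrong class in general. You must impose $N(\mathfrak q_v)\equiv 1+pa_v\pmod{p^{m_v+2}}$, a \v{C}ebotarev condition in $K(\zeta_{p^{m_v+2}})$; this is condition (1) of the lemma. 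Alternatively, rescale the prescribed row by the resulting unit.

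\emph{Second}, (iii) is backwards. $\B_S=\B_\emptyset$ means $V_S=V_\emptyset$, i.e.\ every element of $V_\emptyset=\mathcal O_K^\times/\mathcal O_K^{\times p}$ must be a local $p$-th power at $\mathfrak q_i$, i.e.\ $\mathfrak q_i$ splits completely in $K(\zeta_p,\sqrt[p]{V_\emptyset})$. Under your condition (when $V_\emptyset\neq 0$), the sequence \eqref{eq:generators of G_KS sequence} gives $\rmH^1(G_{K,\{\mathfrak q_i\}},\FF_p)=0$, so $L(\mathfrak q_i)$ does not exist.

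\emph{Third}, your justification of the compatibility of (i) with (iv) does not work. This is the crux: it is exactly where a reciprocity law would tie $\ell(v_i,v_j)$ to $\ell(v_j,v_i)$ if $\zeta_p\in K$. The field $K(\zeta_p,\sqrt[p]{\pi_j})$ is ramified at $\mathfrak q_j$, so ramification does not separate it from $L(\mathfrak q_j)$. The correct reason uses $\zeta_p\notin K$. Put $W=\langle\mathcal O_K^\times,\pi_1,\dots,\pi_{i-1}\rangle K^{\times p}/K^{\times p}$ and $H=\Gal(K(\zeta_{p^N},\sqrt[p]{W})/K(\zeta_{p^N}))\hookrightarrow\Hom(W,\mu_p)$. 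A lift of $\delta\in\Gal(K(\zeta_p)/K)$ acts on $H$ by $h\mapsto h^{\omega(\delta)}$, where $\omega$ is the mod-$p$ cyclotomic character, nontrivial because $\zeta_p\notin K$. Choosing $\omega(\delta)\neq 1$ shows every element of $H$ is a commutator. Hence every abelian subextension of $K(\zeta_{p^N},\sqrt[p]{W})/K$ lies in $K(\zeta_{p^N})$.

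It follows that the compositum of the $L(\mathfrak q_j)$, $j<i$, meets $K(\zeta_{p^N},\sqrt[p]{W})$ inside $K(\zeta_{p^N})$. That intersection is an everywhere unramified $p$-extension of $K$, hence trivial since $h_p(K)=1$.

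The same eigenspace reasoning proves your key fact $K(\zeta_{p^N})\cap K(\zeta_p,\sqrt[p]{W})=K(\zeta_p)$. The class of $\zeta_p$ lies in the $\omega$-eigenspace of $K(\zeta_p)^\times/K(\zeta_p)^{\times p}$, while the image of $W$ lies in the trivial eigenspace. Moreover $\zeta_p\notin K(\zeta_p)^{\times p}$ because $p$ is unramified in $K$.

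With these three repairs the induction goes through and yields the density statement.
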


The above theorem follows by an inductive argument from the following rather technical Lemma. Its proof is based on a refinement of the Gras--Munnier theorem (see \cite{GrasMunnier1998}) due to C. Maire and K. Sankara \cite{MaireSankara2025} and follows verbatim the one in \cite{FeuerpfeilHamzaLim2026}*{Lem. 2.13} with minimal modifications. Our assumptions on $p$ and $K$ guarantee by Remark 2.14 of \cite{FeuerpfeilHamzaLim2026} that we can choose the norm of $\fp_{v'}$ arbitrarily.
\begin{lem}
\label{lem:construction of extension with prescribed ramification}
    Let $p$ be an odd prime and $K$ a number field not containing $\zeta_p$ with $h_p(K)=1$ and $p$ unramified in $K/\QQ$. Fix a set $S$ of tame places of $K$. Let an element $\tau\in G_{K,S}^{\rm el,ab}$, $a\in p\ZZ_p$, $k\geq 1$ and elements $\ell_{v}\in \FF_p$ for $v\in S$ be given.  Then there exists a prime $v'$ of $K$ and a cyclic extension $L/K$ of degree $p$ such that
    \begin{enumerate}
        \item \label{it:Norm condition} $N(\fp_{v'})-1\equiv a\pmod {p^k}$;
        \item \label{it:Frobenius condition} $(v',K_S^{\rm el,ab}/K)=\tau$;
        \item \label{it:Ramification condition} $L/K$ is ramified exactly at $v'$;
        \item \label{it:linking condition} for a generator $\sigma$ of $\Gal(L/K)$ we have $(v,L/K)=\sigma^{\ell_{v}}$ for each $v\in S$.
        \item The canonical surjection $\B_{S}\twoheadrightarrow \B_{S\cup \{v'\}}$ is an isomorphism.
    \end{enumerate}
\end{lem}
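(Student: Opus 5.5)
The strategy is to choose $v'$ by a single \v{C}ebotarev condition in a suitable governing field, and then to produce $L$ via the Gras--Munnier criterion in the refined form of Maire--Sankara. Since $h_p(K)=1$, a cyclic degree-$p$ extension of $K$ ramified exactly at $v'$ exists if and only if $v'$ is tame and splits completely in the governing field $K(\zeta_p,\sqrt[p]{V_\emptyset})$. Here $V_\emptyset=V_\emptyset^\emptyset(K)$, which is generated by units since $\cl(K)[p]=0$. So I would work inside the compositum
\begin{align*}
    M\coloneq K_S^{\rm el,ab}\cdot K(\zeta_{p^{k+1}})\cdot K\bigl(\zeta_p,\sqrt[p]{V_\emptyset^{S}}\bigr)\cdot K\bigl(\zeta_p,\sqrt[p]{\mathcal{O}_{K,S}^\times}\bigr),
\end{align*}
and prescribe a Frobenius class of $v'$ in $\Gal(M/K)$. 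The components are as follows.
\begin{enumerate}
    \item On $K(\zeta_{p^{k}})$, and more precisely on the image of $\Gal(K(\zeta_{p^{k+1}})/K)$ in $(\ZZ/p^{k+1})^\times$, the Frobenius is $N(\fp_{v'})$. We prescribe it so that $N(\fp_{v'})\equiv 1+a\pmod{p^k}$, giving item (\ref{it:Norm condition}). This is possible because $p$ is unramified in $K/\QQ$, so $K\cap\QQ(\zeta_{p^\infty})=\QQ$ and the cyclotomic character is surjective; this is Remark 2.14 of \cite{FeuerpfeilHamzaLim2026}.
    \item On $K_S^{\rm el,ab}$ we prescribe $\tau$, giving item (\ref{it:Frobenius condition}).
    \item On the Kummer part we prescribe the action on $p$-th roots of elements of $V_\emptyset$ to be trivial, so that the Gras--Munnier condition for existence of $L$ ramified exactly at $v'$ holds, giving item (\ref{it:Ramification condition}).
\end{enumerate}

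For item (\ref{it:linking condition}), the linking numbers $(v,L/K)$ are controlled by the choice of $L$ among the extensions ramified only at $v'$, together with the Kummer data. Reciprocity expresses the Artin symbol of $v$ in $L$ through local symbols at $v'$ of elements that are $p$-th powers away from $v$. Concretely, I would pick elements $\alpha_v\in V_\emptyset^{\{v\}}$ type classes (units of $\mathcal{O}_{K,\{v\}}$ modulo $p$-th powers whose valuation at $v$ is not divisible by $p$). The splitting of $v$ in $L$ is then read off from the image of $\alpha_v$ in $K_{v'}^\times/K_{v'}^{\times p}$. So prescribing the Frobenius of $v'$ on $\sqrt[p]{\alpha_v}$ to be the appropriate power determines $\ell_v$ up to the fixed scalar that normalizes $\sigma$. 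This is exactly the Maire--Sankara refinement, and I would follow \cite{FeuerpfeilHamzaLim2026}*{Lem.~2.13} verbatim here.

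For the last item, that $\B_S\to\B_{S\cup\{v'\}}$ is an isomorphism, it suffices that no nonzero element of $V_S$ becomes a local $p$-th power at $v'$. Equivalently, $v'$ must be inert in $K(\zeta_p,\sqrt[p]{x})$ for each nonzero $x\in V_S$, that is, the Frobenius of $v'$ acts nontrivially on each such Kummer generator. I would impose this as a further condition on the Frobenius in the Kummer part $\Gal(K(\zeta_p,\sqrt[p]{V_S})/K(\zeta_p))$.

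The main obstacle is compatibility, that is, showing that all these prescriptions define a nonempty union of conjugacy classes in $\Gal(M/K)$. Two tensions must be resolved.
\begin{enumerate}
    \item The requirements in item (\ref{it:Ramification condition}) that the Frobenius be trivial on $\sqrt[p]{V_\emptyset}$, and in the last item that it be nontrivial on $\sqrt[p]{V_S}$, must not conflict. Here one uses $V_S\subseteq V_\emptyset$, but the conditions are of different type: for $V_S$ one needs nontriviality in the local sense at $v'$ after fixing the cyclotomic part.
    \item The cyclotomic condition $N(\fp_{v'})\equiv 1+a$ with $a\in p\ZZ_p$ forces $v'$ to split in $K(\zeta_p)$. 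We therefore need the Kummer extensions $K(\zeta_p,\sqrt[p]{\cdot})$ to be linearly disjoint from $K(\zeta_{p^{k+1}})K_S^{\rm el,ab}$ over $K(\zeta_p)$.
\end{enumerate}
I would establish both using $\zeta_p\notin K$ and the eigenspace decomposition under $\Gal(K(\zeta_p)/K)$. The Kummer extensions live in the $\omega$-twisted eigenspace, while the abelian-over-$K$ pieces live in the trivial eigenspace, so they are disjoint because $\omega\neq 1$. Granting disjointness, \v{C}ebotarev gives positive density for $v'$, and in particular the existence of $v'$ and $L$.
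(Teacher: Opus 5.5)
\textbf{There is a genuine gap: your treatment of item (5) is backwards.} Your overall route is the one the paper takes, following Lemma~2.13 and Remark~2.14 of the Feuerpfeil--Hamza--Lim paper: a single \v{C}ebotarev condition on $v'$ in the compositum of $K(\zeta_{p^k})$, $K_S^{\rm el,ab}$ and $K(\zeta_p,\sqrt[p]{V_\emptyset^S})$, then Gras--Munnier in the Maire--Sankara form to get $L$ and read off the linking numbers. The error in item (5) makes your \v{C}ebotarev condition empty.

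The surjection $\B_S\twoheadrightarrow\B_{S\cup\{v'\}}$ is the Pontryagin dual of the inclusion $V_{S\cup\{v'\}}\hookrightarrow V_S$. So it is an isomorphism if and only if $V_{S\cup\{v'\}}=V_S$. That means \emph{every} element of $V_S$ is a $p$-th power in $K_{v'}$, i.e.\ $v'$ splits completely in $K(\zeta_p,\sqrt[p]{V_S})$.

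You require the opposite, that no nonzero element of $V_S$ be a local $p$-th power at $v'$. This gives $V_{S\cup\{v'\}}=0$, so whenever $V_S\neq0$ it guarantees that (5) fails. It also directly contradicts your item (3). Since $V_S\subseteq V_\emptyset$, triviality of the Frobenius on $\sqrt[p]{V_\emptyset}$ forces triviality on $\sqrt[p]{x}$ for every $x\in V_S$. Hence your prescribed set of Frobenius classes is empty as soon as $V_S\neq0$. For example, take $S=\emptyset$ and any $K$ other than $\QQ$ or an imaginary quadratic field; then $V_\emptyset\cong\mathcal{O}_K^\times/\mathcal{O}_K^{\times p}\neq0$. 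No eigenspace argument can reconcile ``trivial'' and ``nontrivial'' on the same Kummer generator, so your first ``tension'' is an artifact of this sign error. Even in isolation, a homomorphism $V_S\to\mu_p$ cannot be nonzero on every nonzero vector once $\dim V_S\geq 2$.

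\textbf{The repair is to drop that condition entirely, because item (5) follows from item (3).} If $v'$ splits completely in $K(\zeta_p,\sqrt[p]{V_\emptyset})$, then every element of $V_S\subseteq V_\emptyset$ is a local $p$-th power at $v'$, so $V_{S\cup\{v'\}}=V_S$. The paper argues this way in the proof of Theorem~\ref{thm:Infinite and uFM GS}, where trivial Frobenius in the governing field yields $\B_{\{v_1\}}=\B_\emptyset$.

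After this correction, two disjointness facts remain to check.
\begin{enumerate}
\item The Kummer part must be disjoint from the part abelian over $K$. Your $\omega\neq1$ eigenspace argument handles this correctly.
\item You also need $K_S^{\rm el,ab}\cap K(\zeta_{p^k})=K$, which you did not address. This intersection is unramified outside $S$ and outside $p$, hence unramified everywhere since $p$ is odd, so it is trivial because $h_p(K)=1$.
\end{enumerate}

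A smaller inaccuracy: since $h_p(K)=1$, the extension $L$ ramified exactly at $v'$ is unique when it exists. So the linking numbers are not controlled by a choice of $L$. They are determined entirely by the Frobenius of $v'$ on $\sqrt[p]{\pi_v}$, where $\fp_v^{h}=(\pi_v)$ with $p\nmid h$. The $\omega$-twist ambiguity of that Frobenius under $\Gal(K(\zeta_p)/K)$ rescales all $\ell_v$ by a common unit, and this is absorbed into the choice of $\sigma$.
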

\begin{rem}
    To achieve \eqref{it:Norm condition} in Lemma~\ref{lem:construction of extension with prescribed ramification} we need that $K\cap \QQ(\zeta_{p^n})=\QQ$. This is the case for example if $p$ is unramified in $K/\QQ$. For example for $K=\QQ(\zeta_{p^n})^+$ the maximal totally real subfield of $\QQ(\zeta_{p^n})$ every tame place $v$ of $K$ satisfies $p^n\mid N(\fp_v)-1$.
\end{rem}

\section{Bockstein spectral sequences}
\label{sec:Bockstein spectral sequences}
Before returning to the study of the groups $G_{K,S}$ we introduce some homological techniques --- in particular Bockstein spectral sequences for pro-$p$ groups and $\ZZ_p$-Lie algebras.

These spectral sequences are defined in terms of exact couples. They were first introduced by W.S.~Massey in \cite{Massey1953}. An \emph{exact couple} in an abelian category $\mathcal{A}$ is a tuple $(A,E,\alpha,f,g)$, where $A$ and $E$ are objects of $\mathcal{A}$ and $\alpha,f,g$ are morphisms as in the following diagram
\begin{equation*}
    \begin{tikzcd}[column sep=small]
        A\arrow[rr,"\alpha"]&&A\arrow[dl,"f"]\\
        &E\arrow[ul,"g"]
    \end{tikzcd}
\end{equation*}
such that $\ker(\alpha)=\im(f)$, $\ker(f)=\im(\alpha)$, and $\ker (g)=\im(\alpha)$. {\color{black} To an exact couple $\mathcal{D}=(A,E,\alpha,f,g)$, one constructs the so-called \emph{derived couple} given as $\mathcal{D}'=(\alpha(A),E',\alpha|_{\alpha(A)},f',g')$ with 
\begin{align*}
    E'\coloneq \tfrac{\ker(f\circ g)}{\im(f\circ g)},\quad f'(\alpha(x))\coloneq [f(x)],\quad \text{and}\quad g'([y])=g(y).
\end{align*}
It is straightforward to verify that this tuple is again an exact couple. By iterating this procedure, one gets a sequence of exact couples $(\mathcal{D}^{(r)})_{r\geq 0}$, which we will call the \emph{spectral sequence associated to} $\mathcal{D}=\mathcal{D}^{(0)}$. There is a natural notion of morphisms of exact couples, which give rise to morphisms of the associated spectral sequences. For more details and further applications of exact couples, we refer to \cite{Massey1953}, \cite{stacks-project}*{\href{https://stacks.math.columbia.edu/tag/011P}{tag 011P}}, and \cite{McCeary2001}*{\S 2.2}.}
\begin{con}
\label{con:Bockstein Sequence of Complex}
    Let $\mathcal{A}$ be an abelian category and $C^*\in {\rm Ch}(\mathcal{A})$ a complex. Let $\varphi:C^*\to C^*$ be a monomorphism with cokernel $D^*$. Setting $A\coloneq \bigoplus_{n}\rmH^n(C^*)$ and $E\coloneq\bigoplus_{n}\rmH^n(D^*)$ the long exact sequence in cohomology induces maps $\varphi^*:A\to A$, $\pi^*:A\to E$ and $\delta^*:E\to A$, where $\varphi^*$ and $\pi^*$ are degree preserving and $\delta^*${\color{black}, being induced by the connecting homomorphism,} increases the degree by $1$. 

    The tuple $(A,E,\varphi^*,\pi^*,\delta^*)$ is an exact couple and the resulting spectral sequence $(\BS_r(C^*,\varphi),d_r)_{r\geq 1}$ is called the \emph{Bockstein spectral sequence} associated to $C^*$ and $\varphi$.
\end{con}
\subsection{The Bockstein spectral sequence for pro-\texorpdfstring{$p$}{p} groups}
\label{ssec:Bockstein spectral sequence pro-p groups}
Let $G$ be a pro-$p$, consider the cochain complex $C^*_{\rm cts}(G,\ZZ_p)$ of continuous cochains of $G$ with coefficients in the trivial compact $G$ module $\ZZ_p$ as defined by Tate in \cite{Tate1976}. We view this complex as a complex of abstract $\ZZ_p$-modules. Each $C^n_{\rm cts}(G,\ZZ_p)$ is torsion free and thus multiplication by $p$ is a monomorphism of complexes and the cokernel is given by $C^*_{\rm cts}(G,\FF_p)$. We set 
\begin{align*}
    (\BS_r(G),\beta_r)_{r\geq 1}\coloneq(\BS_r(C^*_{\rm cts}(G,\ZZ_p),p),d_r)_{r\geq 1}
\end{align*}
to be the \emph{Bockstein spectral sequence associated to $G$}. We have $\BS_1^i(G)=\rmH^i(G,\FF_p)$.

Note that this spectral sequence is functorial with respect to $G$ and that $\beta^n_1:\rmH^n(G,\FF_p)\to \rmH^{n+1}(G,\FF_p)$ coincides with the Bockstein map defined as the connecting homomorphism in the long exact sequence associated to $0\to \FF_p\to \ZZ/p^2\to \FF_p\to 0$.

In order to compute the maps $\beta_r$ we make use of an explicit presentation of $G$. To that end, we restrict to the case that $G$ is finitely generated, although some of the results will still be valid in the infinitely generated case. 

For a minimal set of generators $x_1,...,x_d$ of $G$, consider the short exact sequence
\begin{equation}
    \label{eq:presentation}
    \begin{tikzcd}
        1\arrow[r]&R\arrow[r]&F\arrow[r,"\pi"]&G\arrow[r]&1
    \end{tikzcd}
\end{equation}
where $F$ is the free pro-$p$ group on $d$-generators and $\pi$ maps these generators to $x_1,...,x_d$ and $R=\ker \pi\subseteq \Phi(F_d)$. 

For a pro-$p$ group $G$ we denote by $P_n(G)$ the $n^{\rm th}$ term of the lower $p$-central series, which is defined by
\begin{align*}
    P_0(G)\coloneq G\qquad \text{and}\qquad P_{n+1}(G)\coloneq P_n(G)^p[P_n(G),G]\quad  \text{for all }n\geq 2.
\end{align*}
In particular $P_2(G)$ coincides with the Frattini subgroup of $G$. Because we will make frequent use of the subgroup $P_3(G)\cap [G,G]=[G,G]^p[[G,G],G]$ we denote it by $P_3^*(G)$. Note that this notation is not standard in the literature.

We start with a simple lemma, which is well known to experts and just a slight variation of \cite{NSW2000}*{Prop. 3.9.13 (i)}:
\begin{lem}
    \label{lem:relation presentation}
    Let $F$ be a free pro-$p$ group generated by $x_1,...,x_d$ and $\rho\in \Phi(F)$, then there exist unique $a_j\in \ZZ_p$ for $j=1,...,d$, $a_{k,l}\in \FF_p$ for $1\leq k<l\leq d$, and $\rho'\in P_3^*(F)$ such that
    \begin{align*}
        \rho=\prod_{j=1}^dx_j^{pa_j}\prod_{1\leq k<l\leq d}[x_l,x_k]^{a_{k,l}}\cdot \rho'
    \end{align*}
\end{lem}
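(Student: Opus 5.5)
Set $\Phi=\Phi(F)=F^p[F,F]$. I plan to prove existence and uniqueness together by describing the quotient $\Phi/P_3^*(F)$ explicitly: I will show it is an abelian group and decomposes as
\begin{align*}
    \Phi/P_3^*(F)\;\cong\;\ZZ_p^d\oplus \FF_p^{\binom d2},
\end{align*}
with basis the images of $x_j^p$ and the commutators $[x_l,x_k]$ for $k<l$.

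\textbf{Abelian quotient.} Since $P_3^*(F)=[F,F]^p[[F,F],F]$, the image of $[F,F]$ in $F/P_3^*(F)$ is central and of exponent $p$. Writing $x^p$ and $y^p$ in the quotient, the commutator $[x^p,y^p]=[x,y]^{p^2}$ is trivial. Likewise $[x^p,[a,b]]=[x,[a,b]]^p\cdot(\text{elements of }[[F,F],F])$ is trivial. So $\Phi/P_3^*(F)$ is abelian.

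\textbf{Spanning set.} Its subgroup $[F,F]/P_3^*(F)$ is topologically generated by the central, $p$-torsion images of the $[x_l,x_k]$, via bilinearity of commutators modulo $[[F,F],F]$. Using the Hall–Petrescu formula, $(xy)^p\equiv x^py^p[y,x]^{\binom p2}$ modulo $[[F,F],F]$. For $p$ odd, $p\mid\binom p2$, so the correction factor lies in $[F,F]^p$. Thus $x\mapsto x^p$ induces a homomorphism $F^{\rm ab}=\ZZ_p^d\to\Phi/P_3^*(F)$, whose image together with the commutators spans. Existence follows: every $\rho\in\Phi$ can be written $\rho\equiv\prod x_j^{pa_j}\prod[x_l,x_k]^{a_{k,l}}$, and $\rho'$ is defined as the remaining factor.

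\textbf{Uniqueness.} I will construct a quotient detecting all coefficients, namely the "universal" group $Q$. Its underlying set is $\ZZ_p^d\times\FF_p^{\binom d2}$, with multiplication twisted by the alternating cocycle $(u,v)\mapsto (u_lv_k)_{k<l}\bmod p$. This is a pro-$p$ group, nilpotent of class $2$. Sending $x_j$ to the $j$-th basis vector defines a map $F\to Q$. In $Q$ one has $[Q,Q]$ central of exponent $p$, so $P_3^*(F)$ maps to $1$. The image of $\prod x_j^{pa_j}\prod[x_l,x_k]^{a_{k,l}}$ is $((pa_j)_j,\,(\pm a_{k,l}+c_{k,l}))$. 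Here $c_{k,l}$ is a correction coming from the $p$-th powers, which vanishes mod $p$ because the cocycle terms in $x_j^{pa}$ are multiples of $\binom{pa}{2}$ times a symmetric-zero expression. Hence the exponents $a_j\in\ZZ_p$ and $a_{k,l}\in\FF_p$ are read off uniquely, and $\rho'$ is then determined.

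\textbf{Main obstacle.} The delicate point is the $p$-power computation: verifying that $(xy)^p$ and the images of $x_j^{pa}$ in $Q$ introduce no commutator contributions mod $p$. This is exactly where $p$ odd is used. Alternatively, uniqueness can be cited from \cite{NSW2000}*{Prop. 3.9.13}, which identifies $\Phi/\Phi^p[\Phi,F]$ with $\rmH_2$-type data. I would instead adapt that proof, replacing $\Phi^p[\Phi,F]$ by the smaller $P_3^*(F)$ so that the $x_j^p$-exponents live in $\ZZ_p$ rather than $\FF_p$.
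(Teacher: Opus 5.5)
Your proof is correct, and it does more than the paper does. The paper gives no argument for this lemma. It only remarks that the lemma is well known and a slight variation of \cite{NSW2000}*{Prop.~3.9.13~(i)}, which is exactly the alternative you mention at the end. The point of that variation is to take the error term in $P_3^*(F)=P_3(F)\cap[F,F]$ rather than in $P_3(F)$. Since $P_3(F)$ contains the $x_j^{p^2}$, using it would determine the $a_j$ only modulo $p$.

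Your main route is genuinely different and self-contained. You identify $F/P_3^*(F)$ with the explicit class-two group $Q$, an extension of $\ZZ_p^d$ by $\FF_p^{\binom d2}$. The $a_j$ are then read off in the torsion-free quotient $\ZZ_p^d$, and the $a_{k,l}$ in the central $\FF_p$-part, since $[x_l,x_k]\mapsto(0,e_{(k,l)})$. This shows why shrinking the error term upgrades the $a_j$ from $\FF_p$ to $\ZZ_p$ while keeping uniqueness, which the citation leaves to the reader. It even shows that $\Phi/P_3^*(F)$ is central in $F/P_3^*(F)$, because $[x^p,y]\equiv[x,y]^p$ there.

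A few small points:
\begin{enumerate}
\item The cocycle $(u,v)\mapsto(u_lv_k)_{k<l}$ is bilinear, not alternating. Bilinearity is all you need, and the commutator is then $\beta(u,v)-\beta(v,u)$.
\item Your justification of $c_{k,l}\equiv 0$ only covers the terms inside a single $x_j^{pa_j}$; these vanish for every exponent because $\beta(e_j,e_j)=0$. The cross terms $\beta(pa_ie_i,pa_je_j)$ between different factors vanish because they are divisible by $p^2$. In any case $c_{k,l}$ depends only on the already determined $a_j$, so uniqueness does not need it to vanish.
\item Oddness of $p$ is not actually used. Once $\Phi/P_3^*(F)$ is known to be abelian, $e_j\mapsto\overline{x_j^{p}}$ is automatically a $\ZZ_p$-linear section of $\Phi/P_3^*(F)\to p\ZZ_p^d$. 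So the spanning step needs no Hall--Petrescu computation, and the group $Q$ works for every $p$.
\end{enumerate}
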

Let $G$ be a finitely presented pro-$p$ group, presented as in \eqref{eq:presentation} with generating set $x_1,...,x_d$. Denote by $\chi_1,...,\chi_d$ the basis of $\rmH^1(G,\FF_p)$ dual to $x_1,...,x_d$. For each $\rho\in R$ there is an associated linear form $\tr_\rho:\rmH^2(G,\FF_p)\to \FF_p$.
\begin{prop}
    \label{prop:HigherBockstein for groups}
    Let $\rho\in R$ and $a_j\in \ZZ_p$, $a_{k,l}$ be as in Lemma~\ref{lem:relation presentation}. If $v_p(a_j)\geq r$ for all $j=1,...,d$ then the map $\tr_{\rho}:\rmH^2(G,\FF_p)\to \FF_p$ induces a linear form $\bar{\tr}_\rho:\BS^2_{r+1}(G)\to \FF_p$.

    If $\alpha\in \BS^1_{r+1}(G)$ is represented by $\sum_{j=1}^d c_j\chi_j\in \rmH^1(G,\FF_p)$, then we have
    \begin{align}
    \label{eq:explicit form of higher Bockstein}
        \bar\tr_{\rho}(\beta_{r+1}^1(\alpha))= -\sum_{i=1}^d c_j \frac{a_j}{p^{r}} 
    \end{align}
\end{prop}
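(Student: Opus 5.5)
We work with the standard description of $\tr_\rho$: a class in $\rmH^2(G,\FF_p)$ is the transgression image of a $G$-invariant homomorphism $\psi\colon R\to\FF_p$, and $\tr_\rho$ evaluates $\psi$ at $\rho$. The proof has two parts. First we show that $\tr_\rho$ vanishes on the image of $\beta_s^1$ for every $s\le r$, so that it descends to $\BS^2_{r+1}(G)$. Then we compute $\beta_{r+1}^1$ explicitly through the exact couple of Construction~\ref{con:Bockstein Sequence of Complex}.

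\textbf{Recalling $\beta_s$ and reducing to the $\FF_p$-span of $\rho$.} An element $\alpha\in\BS^1_s(G)$ is represented by $\chi\in\rmH^1(G,\FF_p)$ whose connecting image $\delta\chi\in\rmH^2(G,\ZZ_p)$ is divisible by $p^{s-1}$. Writing $\delta\chi=p^{s-1}y$, we have $\beta_s(\alpha)=[y\bmod p]$. Degree one is convenient because $\rmH^1(G,\ZZ_p)=\Hom(G,\ZZ_p)$ is torsion free. Using the continuous cochain complex, lift $\chi=\sum c_j\chi_j$ to a cochain $\tilde\chi\colon G\to\ZZ_p$. The coboundary $d\tilde\chi$ is divisible by $p$ and represents $\delta\chi$ as $\tfrac1p d\tilde\chi$.

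To evaluate against $\rho$, pull back to $F$. Put $f=\sum \tilde c_j\chi_j^{F}\colon F\to\ZZ_p$, where $\chi_j^F$ are the $\ZZ_p$-valued homomorphisms dual to the $x_j$. Let $\tilde f$ be a set-theoretic section-compatible lift. The relevant quantity is then the value of the $G$-invariant homomorphism $R\to\ZZ_p$, $\rho\mapsto f(\rho)$, which equals $\sum_j \tilde c_j\, p a_j$. The commutators and $\rho'\in P_3^*(F)$ contribute nothing, since $f$ is a homomorphism to an abelian torsion-free group and kills $[F,F]$.

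Transgression identifies, up to the standard sign, the class $\delta\chi$ evaluated on $\rho$ (via the $\ZZ_p$-transgression $\Hom(R,\ZZ_p)^G\to\rmH^2(G,\ZZ_p)$) with $-\tfrac1p f(\rho)=-\sum\tilde c_j a_j$. This is the step to check carefully. The natural tool is the five-term sequence for the presentation with $\ZZ_p$ coefficients, together with compatibility of transgression with reduction mod $p$ and with division by $p$.

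\textbf{Well-definedness.} The integer-valued pairing $\langle y,\rho\rangle$, defined through transgression, is compatible with $\tr_\rho$ after reduction mod $p$. Suppose $\beta_s(\alpha')$ lies in the image for some $s\le r$, represented by $y$ with $p^{s-1}y=\delta\chi'$. Then
\[
p^{s-1}\langle y,\rho\rangle=-\sum \tilde c'_j a_j\in p^r\ZZ_p .
\]
Hence $\langle y,\rho\rangle\in p^{r-s+1}\ZZ_p\subseteq p\ZZ_p$, and $\tr_\rho$ kills the image of $\beta_s$.

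There is a subtlety: $\langle\cdot,\rho\rangle$ on $\rmH^2(G,\ZZ_p)$ may be ill-defined on torsion unless we work with the cochain-level formula. To handle this, define the functional directly at cochain level. Restrict a 2-cocycle $z$ on $G$ to $F$; there it is the coboundary of some $1$-cochain $h$. Set $\langle z,\rho\rangle=h(\rho)$. This is well defined modulo the values of $\ZZ_p$-homomorphisms $F\to\ZZ_p$ on $\rho$, which lie in $p^{r+1}\ZZ_p$ by hypothesis.

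This control of ambiguity is exactly why the hypothesis $v_p(a_j)\ge r$ is needed, and it is the main obstacle. With it, $\tr_\rho$ also kills the relevant kernels. Since $\BS^2_{r+1}$ is a subquotient of $\rmH^2$, we must also check that restricting $\tr_\rho$ to $\ker\beta$-type subspaces is harmless; this holds because restriction to a subspace is always fine.

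\textbf{The formula.} Finally, for $\alpha\in\BS^1_{r+1}$, the previous computation with $s=r+1$ gives
\[
p^{r}\langle y,\rho\rangle=-\sum c_j a_j ,
\]
and therefore
\[
\bar\tr_\rho(\beta_{r+1}\alpha)=-\sum c_j a_j/p^r \pmod p .
\]
The result is independent of the lifts $\tilde c_j$, because changing $\tilde c_j$ by $p$ changes the sum by $p\cdot a_j/p^r\in p\ZZ_p$.
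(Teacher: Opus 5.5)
Your argument is correct, and its core computation is the paper's. Lifting $\chi$ to a homomorphism $f\colon F\to\ZZ_p$ turns the inflated cocycle $\tfrac1p\,d(\tilde\chi\circ\pi)$ into $d\bigl(\tfrac1p(\tilde\chi\circ\pi-f)\bigr)$, whose value at $\rho$ is $-\tfrac1p f(\rho)=-\sum_j\tilde c_ja_j$.

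What differs is the organization. The paper inducts on $r$: the case $r=0$ is quoted from \cite{NSW2000}, the formula at the previous level shows that $\tr_\rho$ kills the image of the previous differential, and the explicit cocycle is only referred to. You instead lift $\tr_\rho$ once to a $\ZZ_p$-linear map
\[
\langle\cdot,\rho\rangle\colon \rmH^2(G,\ZZ_p)\cong\Hom(R,\ZZ_p)^F/\mathrm{res}\,\Hom(F,\ZZ_p)\longrightarrow \ZZ_p/p^{r+1}\ZZ_p .
\]
This map is well defined exactly because $\phi(\rho)=p\sum_j\phi(x_j)a_j\in p^{r+1}\ZZ_p$ for all $\phi\in\Hom(F,\ZZ_p)$. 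Descent and formula then both follow by linearity from $\langle\delta\chi,\rho\rangle=-\sum_j\tilde c_ja_j$.

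Your route buys two things. First, it makes explicit where $v_p(a_j)\ge r$ enters, and how $\tr_\rho$ is evaluated on a class $y$ known only through $p^ry=\delta\chi$; the paper's sketch leaves both implicit. Second, it shortens your descent step. The denominator of $\BS^2_{r+1}(G)$ is the image of $\rmH^2(G,\ZZ_p)[p^r]$ in $\rmH^2(G,\FF_p)$, and $p^ry=0$ gives $\langle y,\rho\rangle\in p\ZZ_p/p^{r+1}\ZZ_p$ at once, without passing through $\delta\chi'$. The inductive route, for its part, reuses the classical case verbatim and needs no integral pairing.

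In a write-up, make the following adjustments.
\begin{enumerate}
\item Drop the initial ``integer-valued'' formulation. A $\ZZ_p$-valued linear form would kill the torsion class $\delta\chi$, so the target $\ZZ_p/p^{r+1}\ZZ_p$ is essential.
\item Pull back by inflation rather than restriction, and use normalized cochains.
\item Note that $\rmH^2_{\rm cts}(F,\ZZ_p)=0$.
\end{enumerate}
With the convention $\mathrm{tg}(h|_R)=[z]$ for $\inf z=dh$, your sign is exactly the paper's.
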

\begin{proof}
    This is done by induction. The case $r=0$ is precisely \cite{NSW2000}*{Prop. 3.9.14}. For the induction step $r-1\to r$ the second statement for $r-1$ yields that $\tr_\rho$ descends to the subquotient $\BS_r^2(G)$ of $\BS_{r-1}^2(G)$, since $\tr_{\rho}$ is zero on the image of $\beta_{r-1}^1$. For the explicit description, one follows the proof of \cite{NSW2000}*{Prop. 3.9.13 (ii)} to construct an explicit cocycle in $\rmH^2_{\rm cts}(G,\ZZ_p)$ representing $\delta(\alpha)$ and arrives at the form of $\beta_r^1(\alpha)$ by the construction of the Bockstein spectral sequence.
\end{proof}
\subsection{The Bockstein spectral sequence for \texorpdfstring{$\ZZ_p$}{Zp}-Lie algebras}
\label{ssec:Bockstein spectral sequence for Lie algebra}
Let $(\Lie,[\bl,\bl])$ be a $\ZZ_p$-Lie algebra, whose underlying $\ZZ_p$-module is free and finitely generated. For a $\Lie$-module $A$ we denote by ${\rm CE}^*(\Lie,A)$ the Chevalley--Eilenberg complex, whose terms are ${\rm CE}^n(\Lie,A)\cong \Hom_{\ZZ_p}(\Lambda^n(\Lie),A)$ and the differentials are given as continuation of 
\begin{align*}
    d^1:{\rm CE}^1(\Lie,A)\to {\rm CE}^2(\Lie,A),\quad f\mapsto (x\wedge y \mapsto xf(y)-yf(x)-f([x,y]))
\end{align*}
For fundamental facts about the Chevalley--Eilenberg complex we refer to \cite{Weibel1994}*{\S 7.7}. The most important fact is that it computes Lie algebra cohomology, i.e., $\rmH^i({\rm CE}^*(\Lie,A))\cong \rmH^i(\Lie,A)$. 

Consider the following short exact sequence of complexes:
\begin{equation*}
    \begin{tikzcd}
        0\arrow[r]&{\rm CE}^*(\Lie,\ZZ_p)\arrow[r,"p"]&{\rm CE}^*(\Lie,\ZZ_p)\arrow[r]&{\rm CE}^*(\Lie,\FF_p)\arrow[r]& 0
    \end{tikzcd}
\end{equation*}
The \emph{Bockstein spectral sequence associated to $\Lie$} is defined as
\begin{align*}
    (\BS_r(\Lie),\mathfrak{b}_r)_{r\geq 1}\coloneq(\BS_r({\rm CE}^*(\Lie,\ZZ_p),p),d_r)_{r\geq 1}.
\end{align*}
Similarly, as before, we have $\BS_1^i(\Lie)\cong \rmH^i(\Lie,\FF_p)$ and the spectral sequence is functorial in $\Lie$. 

From now on we assume $\Lie$ to be \emph{powerful}, i.e., $[\Lie,\Lie]\subseteq p\Lie$ (cf., e.g., \cite{DDMS1999}*{\S 9.4}). In this situation $\rmH^i(\Lie,\FF_p)\cong \Lambda^i((\Lie\otimes \FF_p)^\vee)$, where $\bl^\vee$ denotes the $\FF_p$-linear dual. 

By the torsion-freeness of $\Lie$ as $\ZZ_p$-module we can construct another Lie bracket $\dbb{\bl,\bl}$ on $\Lie$ by the property that for any $x,y\in \Lie$ we have $p\dbb{x,y}=[x,y]$. We denote $\Lie$ equipped with this new bracket by $\fLie$. Using this new Lie bracket we can give an explicit description of the connecting homomorphism $\rmH^1(\Lie,\FF_p)=\Hom_{\ZZ_p}(\Lie,\FF_p)\to \rmH^2(\Lie,\ZZ_p)$ by 
\begin{align*}
    f\mapsto [(x\wedge y \mapsto -\widehat{f}(\dbb{x,y}))]
\end{align*}
where $\widehat{f}:\Lie\to \ZZ_p$ is any lift of $f$. Notice that $\widehat{f}$ is not a homomorphism, unless $f=0$. It follows by a standard argument that the cohomology class in $\rmH^2(\Lie,\ZZ_p)$ is independent of the choice of lift. 

Using this description we derive a statement similar to Proposition~\ref{prop:HigherBockstein for groups} for powerful $\ZZ_p$-Lie algebras. For our later applications it is of advantage to consider the duals of $\mathfrak{b}^1_r$, rather than $\mathfrak{b}^1_r$ itself.
\begin{prop}
\label{prop:higherBockstein Lie algebra}
    Let $\lambda\in \Lambda^2(\Lie\otimes \FF_p)=\BS_1^2(\Lie)^\vee$, then $\lambda$ defines an element in $\BS_r^2(\Lie)$ if and only if there exists a lift $\widehat{\lambda}\in \Lambda^2(\Lie)$ such that $\dbb{\widehat{\lambda}}\equiv 0\pmod {p^{r-1}}$. In this case 
    \begin{align*}
        (\mathfrak{b}_r^1)^\vee([\lambda])=-\big[{\dbb{\widehat{\lambda}}}/{p^{r-1}}\big]\in \BS^1_1(\Lie)^\vee
    \end{align*}
\end{prop}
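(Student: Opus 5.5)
The plan is to dualize and work with the Chevalley--Eilenberg \emph{chain} complex, where lifts and divisibility by powers of $p$ can be handled directly. Since $\Lie$ is free of finite rank over $\ZZ_p$, the complex ${\rm CE}^*(\Lie,\ZZ_p)=\Hom_{\ZZ_p}(\Lambda^*(\Lie),\ZZ_p)$ is the $\ZZ_p$-dual of the homological complex $C_*=\Lambda^*(\Lie)$. Its differential in low degrees is
\[
\partial(x\wedge y)=-[x,y],\qquad \partial(x\wedge y\wedge z)=-[x,y]\wedge z+[x,z]\wedge y-[y,z]\wedge x .
\]
Multiplication by $p$ on $C_*$ gives a homological Bockstein spectral sequence $(\BS_r(C_*),\partial_r)$.

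\textbf{Step 1: duality of the two spectral sequences.} I first check that, for a bounded complex of finitely generated free $\ZZ_p$-modules, $\BS_r^n({\rm CE}^*)$ is canonically the $\FF_p$-dual of $\BS_r^{n}(C_*)$, with $\mathfrak b_r$ the transpose of $\partial_r$. For the first page this is universal coefficients over the field $\FF_p$. For later pages I use the explicit description of the pages of a Bockstein spectral sequence:
\begin{itemize}
\item $E_r$ consists of classes mod $p$ admitting a lift $\hat c$ with $\partial\hat c\in p^r C$;
\item modulo reductions of $\partial(y)/p^{r-1}$;
\item with $d_r[c]=[\partial\hat c/p^r]$.
\end{itemize}
Alternatively, one decomposes $C_*$ into elementary complexes $\ZZ_p\xrightarrow{p^k}\ZZ_p$ and $\ZZ_p$ via Smith normal form, and checks the claim summand by summand, where it is obvious. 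I expect this bookkeeping, namely that the pairing is compatible with passing to each derived couple, to be the main technical obstacle. The Smith normal form route is the one I would try first, because the statement is additive and the elementary cases are trivial.

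\textbf{Step 2: the first page.} Since $\Lie$ is powerful, $[\Lie,\Lie]\subseteq p\Lie$, so $\partial(C_n)\subseteq pC_{n-1}$ for all $n$. The displayed formula shows every term in $\partial$ involves a bracket. Hence the differential of $C_*\otimes\FF_p$ vanishes, and $\BS_1^2(C_*)=\Lambda^2(\Lie\otimes\FF_p)$, matching $\BS_1^2(\Lie)^\vee$. Moreover $\partial(x\wedge y)=-p\dbb{x,y}$, so on $\Lambda^2$ we have $\partial=-p\dbb{\bl}$, where $\dbb{\bl}\colon\Lambda^2(\Lie)\to\Lie$ is induced by the bracket of $\fLie$.

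\textbf{Step 3: applying the explicit description.} By Step 1, $\lambda\in\Lambda^2(\Lie\otimes\FF_p)$ survives to $\BS_r^2$ if and only if it has a lift $\widehat\lambda\in\Lambda^2(\Lie)$ with $\partial\widehat\lambda\in p^r\Lie$. By Step 2 this is equivalent to $\dbb{\widehat\lambda}\equiv0\pmod{p^{r-1}}$. In that case
\[
\partial_r[\lambda]=[\partial\widehat\lambda/p^r]=-[\dbb{\widehat\lambda}/p^{r-1}]\in\Lie\otimes\FF_p=\BS^1_1(\Lie)^\vee .
\]
Transposing via Step 1 identifies $\partial_r$ with $(\mathfrak b_r^1)^\vee$, which gives the stated formula.

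Finally I verify independence of the lift, which is the standard argument in the construction of the exact couple. Changing $\widehat\lambda$ by $p\mu$ with $\dbb{\mu}\equiv 0\pmod{p^{r-2}}$ alters $\dbb{\widehat\lambda}/p^{r-1}$ by $\dbb{\mu}/p^{r-2}$. This is exactly the image of an earlier differential $\partial_{r-1}$, or zero, so it vanishes in the target subquotient. It is in this sense that $\BS_1^1(\Lie)^\vee$ in the statement should be read, namely as the appropriate subquotient on the $r$-th page.
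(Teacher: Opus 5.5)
Your proof is correct, but it is organized differently from the paper's. The paper stays on the cohomological side and argues by induction on $r$, in parallel with Proposition~\ref{prop:HigherBockstein for groups}. It uses the explicit cocycle $x\wedge y\mapsto -\widehat f(\dbb{x,y})$ for the connecting map $\rmH^1(\Lie,\FF_p)\to\rmH^2(\Lie,\ZZ_p)$, writes $\delta(\alpha)=p^{r-1}y$, and evaluates $\mathfrak{b}^1_r(\alpha)$ against a lift $\widehat\lambda$ via the exact-couple construction. You instead transpose to the Chevalley--Eilenberg chain complex $\Lambda^*(\Lie)$. There the proposition becomes a literal instance of the chain-level description of the pages of a homological Bockstein spectral sequence.

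What your route buys:
\begin{itemize}
\item The ``if and only if'' and the independence of the lift come in one stroke, since surviving to page $r$ is exactly the existence of a lift with $\partial\widehat\lambda\in p^r\Lie$.
\item In a cohomological induction the converse direction needs an extra lift-adjustment step: one modifies $\widehat\lambda$ by $p^{r-1-s}\mu$ for classes $\mu$ feeding earlier differentials. The paper does not spell this out.
\item It makes clear that the value lives in $\BS^1_r(\Lie)^\vee$, a quotient of $\Lie\otimes\FF_p$, because $\partial$ vanishes on $\Lambda^1(\Lie)$.
\end{itemize}
The paper's route, in turn, is literally the same computation as for the group $G$, where no finite free chain complex is at hand. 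That is what keeps the comparison in diagram~\eqref{diag:Bockstein square} transparent.

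Your Step~1 is easier than you expect and needs no Smith normal form. The convention $d^1f(x\wedge y)=-f([x,y])$ says exactly that the cochain differential is $f\mapsto f\circ\partial$ for your $\partial$. Take lifts $\widehat f,\widehat c$ with $\widehat f\circ\partial\equiv 0$ and $\partial\widehat c\equiv 0\pmod{p^r}$. Then $(\widehat f\circ\partial)(\widehat c)/p^r=\widehat f(\partial\widehat c/p^r)$, so the $r$-th differentials are adjoint for the pairing induced on representatives. Perfectness passes from $E_r$ to $E_{r+1}$ because the homology of mutually adjoint maps between finite-dimensional $\FF_p$-spaces is dual. The same observation shows that no sign is lost in the transposition, so your minus sign agrees with the statement.
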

\begin{proof}
    The proof works exactly in the same way as the one of Proposition~\ref{prop:HigherBockstein for groups}.
\end{proof}
The next corollary is an immediate consequence of Proposition~\ref{prop:higherBockstein Lie algebra}
\begin{cor}
\label{cor:Second page Bockstein Lie algebra}
    We have $\BS_2^1(\Lie)^\vee\cong (\fLie_p)^{\rm ab}$ and  $\mathfrak{b}^1_s=0$ for all $s<r$ if and only if $\dbb{\fLie,\fLie}\subseteq p^{r-1}\fLie$.
\end{cor}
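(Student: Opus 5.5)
The plan is to use Proposition~\ref{prop:higherBockstein Lie algebra} directly and dualize. Throughout I read $\fLie_p$ as $\fLie\otimes\FF_p$. Two preliminary observations:
\begin{itemize}
\item Since $\Lie$ is powerful, $\rmH^1(\Lie,\FF_p)=\Hom(\Lie\otimes\FF_p,\FF_p)$ and $\BS_1^2(\Lie)^\vee=\Lambda^2(\Lie\otimes\FF_p)$.
\item All differentials $\mathfrak{b}_s^0$ vanish. Indeed, $\rmH^0(\Lie,\ZZ_p)=\ZZ_p\to\rmH^0(\Lie,\FF_p)=\FF_p$ is surjective, so the connecting map out of degree $0$ is zero.
\end{itemize}
Hence $\BS_{s+1}^1(\Lie)=\ker\mathfrak{b}_s^1$ for every $s$: nothing is ever quotiented out in degree $1$.

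\emph{First claim.} We have $\BS_2^1(\Lie)=\ker(\mathfrak{b}_1^1)$. Its dual is therefore the cokernel of
\[
(\mathfrak{b}_1^1)^\vee:\Lambda^2(\Lie\otimes\FF_p)\to\Lie\otimes\FF_p .
\]
By Proposition~\ref{prop:higherBockstein Lie algebra} with $r=1$, every $\lambda$ is admissible, and the map is $\lambda\mapsto-[\dbb{\widehat\lambda}]$. Since $\dbb{\bl,\bl}$ is bilinear and $\Lambda^2(\Lie)$ is spanned by elementary wedges $x\wedge y\mapsto\dbb{x,y}$, the image is $(\dbb{\fLie,\fLie}+p\fLie)/p\fLie$. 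The cokernel is thus
\[
\fLie/(\dbb{\fLie,\fLie}+p\fLie)=(\fLie_p)^{\rm ab},
\]
which gives the first claim.

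\emph{Second claim.} I will argue by induction on $r$; the case $r=1$ is vacuous. Assume the equivalence for $r-1$, so $\mathfrak{b}_s^1=0$ for $s<r-1$ iff $\dbb{\fLie,\fLie}\subseteq p^{r-2}\fLie$. Under this condition I need that $\mathfrak{b}_{r-1}^1=0$ iff $\dbb{\fLie,\fLie}\subseteq p^{r-1}\fLie$.
\begin{itemize}
\item \emph{Source.} We have $\BS_{r-1}^1=\rmH^1$, since all earlier $\mathfrak{b}^1$ vanish and $\mathfrak{b}^0$ is always zero.
\item \emph{Admissibility.} Given $\dbb{\fLie,\fLie}\subseteq p^{r-2}\fLie$, every lift $\widehat\lambda$ of every $\lambda\in\Lambda^2(\Lie\otimes\FF_p)$ satisfies $\dbb{\widehat\lambda}\equiv0\pmod{p^{r-2}}$. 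So by the proposition every $\lambda$ defines a class of $\BS_{r-1}^2(\Lie)^\vee$.
\item \emph{Dual map.} The dual $(\mathfrak{b}^1_{r-1})^\vee$ sends $[\lambda]$ to $-[\dbb{\widehat\lambda}/p^{r-2}]$. This is well defined: changing the lift by $p\mu$ alters $\dbb{\widehat\lambda}$ by $p\dbb{\mu}\in p^{r-1}\fLie$.
\item \emph{Vanishing.} The map vanishes iff $\dbb{\widehat\lambda}\in p^{r-1}\fLie$ for all $\widehat\lambda\in\Lambda^2(\Lie)$, i.e.\ iff $\dbb{\fLie,\fLie}\subseteq p^{r-1}\fLie$.
\end{itemize}
Conversely, if $\dbb{\fLie,\fLie}\subseteq p^{r-1}\fLie$, the inductive hypothesis applies and gives $\mathfrak{b}_s^1=0$ for $s<r-1$. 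The computation above then gives $\mathfrak{b}_{r-1}^1=0$.

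The main obstacle is the bookkeeping in the dualization. I must make sure the vanishing of $\mathfrak{b}^1_s$ is equivalent to vanishing of its dual as a map from the full subquotient $\BS_s^2(\Lie)^\vee$. That subquotient could a priori be a proper subquotient of $\Lambda^2(\Lie\otimes\FF_p)$, because $\mathfrak{b}^2_s$ may be nonzero. The admissibility criterion of Proposition~\ref{prop:higherBockstein Lie algebra} is what ensures every $\lambda$ contributes, so nothing is lost.
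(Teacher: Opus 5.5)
Your proposal is correct and follows the paper's route: the paper simply calls the corollary an immediate consequence of Proposition~\ref{prop:higherBockstein Lie algebra}. Your argument unwinds exactly that claim: the vanishing of $\mathfrak{b}^0_s$ gives $\BS^1_{s+1}(\Lie)=\ker\mathfrak{b}^1_s$, dualizing yields the cokernel $(\fLie_p)^{\rm ab}$, and you induct on $r$ using that every $\lambda\in\Lambda^2(\Lie\otimes\FF_p)$ is admissible once $\dbb{\fLie,\fLie}\subseteq p^{r-2}\fLie$. You also handle correctly the one point the paper leaves implicit, namely the dual of the subquotient $\BS^2_{r-1}(\Lie)$.
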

\subsection{Uniform pro-\texorpdfstring{$p$}{p} groups}
\label{ssec:Bockstein Spectral Sequences are the same for uniform}
The theory of $p$-adic analytic groups, Lie groups over the field $\QQ_p$, has its origins in the seminal paper of M.~Lazard~\cite{Lazard1965}. A.~Lubotzky and A.~Mann simplified and reinterpreted in \cites{LubotzkyMann1987I,LubotzkyMann1987II} Lazard's ideas and introduced (uniformly) powerful pro-$p$ groups yielding a more intrinsic group-theoretic framework for studying $p$-adic analytic groups. An excellent exposition of this topic is given by J.~D.~Dixon, M.~P.~F.~du Sautoy, A.~Mann and D.~Segal in  \cite{DDMS1999}.

A \emph{uniformly powerful} or \emph{uniform pro-$p$ group} is a finitely generated pro-$p$ group $U$, for which $[U,U]\subseteq U^p$ (and $[U,U]\subseteq U^4$ if $p=2$), which is torsion free (cf. \cite{DDMS1999}*{Def. 4.1 \& Thm. 4.5}).

\begin{fct}
    Every powerful group (i.e. a pro-$p$ group $G$ with $[G,G]\subseteq G^p$) has an open uniform subgroup and every compact $p$-adic analytic group contains an open uniform subgroup.
\end{fct}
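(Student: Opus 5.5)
The plan is to derive both assertions from the standard structure theory of powerful pro-$p$ groups in \cite{DDMS1999}. Throughout, ``powerful group'' is read as \emph{finitely generated} powerful pro-$p$ group, as in \cite{DDMS1999}; without finite generation the first assertion fails.

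For the first assertion, let $G$ be finitely generated and powerful, so that $[G,G]\subseteq G^p$. The first step is to recall that the lower $p$-series of $G$ has a simple form:
\begin{itemize}
\item[(a)] $P_{i+1}(G)=G^{p^i}$ for all $i$;
\item[(b)] each $G^{p^i}$ is again powerful;
\item[(c)] each $G^{p^i}$ is open in $G$.
\end{itemize}
Indeed, every element of $G^{p^i}$ is a single $p^i$-th power, and $G/G^{p^i}$ is a finitely generated pro-$p$ group of finite exponent generated by finitely many elements, hence finite.

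The second step uses the $p$-power map. It induces epimorphisms
\begin{align*}
    P_i(G)/P_{i+1}(G)\twoheadrightarrow P_{i+1}(G)/P_{i+2}(G)
\end{align*}
between finite elementary abelian $p$-groups. Hence the sequence of ranks $d\bigl(P_i(G)\bigr)$ is non-increasing. Being a sequence of non-negative integers, it is eventually constant, say from some index $k$ on. From that index on, all the maps above are isomorphisms.

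The third step is to conclude that $U=P_k(G)$ is uniform. By the criterion in \cite{DDMS1999}*{Def. 4.1 \& Thm. 4.5}, it suffices that $U$ is finitely generated, powerful, and that the $p$-power maps between consecutive quotients of its lower $p$-series are isomorphisms. This holds because $P_j(U)=P_{k+j-1}(G)$. Since $U$ is open, this proves the first claim. The only delicate point here is the identification of the lower $p$-series of $P_k(G)$ with a tail of that of $G$. This is where powerfulness is used, through the standard lemmas $(G^{p^i})^{p}=G^{p^{i+1}}$ and $[G^{p^i},G^{p^j}]\subseteq G^{p^{i+j+1}}$.

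For the second assertion, let $H$ be a compact $p$-adic analytic group. The plan is to invoke Lazard's characterization as presented in \cite{DDMS1999}*{Thm. 8.32 and Cor. 8.34}: $H$ has an open subgroup $G$ that is a finitely generated powerful pro-$p$ group. Concretely, in a $p$-adic chart around $1$, a sufficiently small ball is a pro-$p$ subgroup on which the group law is given by convergent power series, and shrinking the ball forces $[G,G]\subseteq G^p$. Applying the first part to $G$ gives an open uniform subgroup $U\le G$, which is also open in $H$. I expect this step to be the main obstacle. It is genuinely the deep part of Lazard's theory, namely the equivalence between analytic structure and finite rank. I would not reprove it, but cite it from \cite{DDMS1999}.
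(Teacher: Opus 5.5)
Your proposal is correct and follows the route the paper relies on. The paper gives no proof of this Fact and simply appeals to the standard theory in \cite{DDMS1999}; your first part is exactly the stabilization argument for $d(P_i(G))$ from Chapter~4 there, and your second part cites Lazard's theory from Chapter~8 in the same way. Two minor remarks: your added finite-generation hypothesis is indeed necessary, and for $p=2$ one must use $[G,G]\subseteq G^4$, because $[G,G]\subseteq G^2$ holds in every pro-$2$ group. Also, the direction of Lazard's theorem you need (analytic $\Rightarrow$ open standard, hence uniform, subgroup) is the comparatively elementary one, not the deep implication from finite rank to analyticity.
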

One of the principal constructions for uniform pro-$p$ groups $U$ is the associated powerful $\ZZ_p$-Lie algebra $\Lie(U)$ (see \cite{DDMS1999}*{\S 4.5}). This construction is inspired by work of Lazard about $p$-saturated groups. There are mutually inverse maps $\exp:\Lie(U)\to U$ and $\log:U\to \Lie(U)$ such that 
\begin{align*}
    \exp(x)\exp(y)=\exp(\Phi(x,y))
\end{align*}
where $\Phi$ is the Baker--Campbell--Hausdorff series. 

The following theorem shows the strength of the theory of uniform pro-$p$ groups:
\begin{thm}
    There is an equivalence between the category of uniform pro-$p$ groups and the category of powerful Lie algebras over $\ZZ_p$, i.e., Lie algebras $\Lie$ over $\ZZ_p$ that are free as $\ZZ_p$-modules and satisfy $[\Lie,\Lie]\subseteq p\Lie$.
\end{thm}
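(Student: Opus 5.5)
The plan is to build the equivalence out of the two functors $U\mapsto \Lie(U)$ and $\Lie\mapsto U(\Lie)$ that were just recalled, and to check they are mutually quasi-inverse.

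\textbf{Group to Lie algebra.} Given a uniform $U$, I take $\Lie(U)$ to be $U$ as a set. The operations are defined by Lazard's limits:
\begin{align*}
    x+y\coloneq \lim_{n\to\infty}\big(x^{p^n}y^{p^n}\big)^{p^{-n}},\qquad
    (x,y)_{\Lie}\coloneq \lim_{n\to\infty}[x^{p^n},y^{p^n}]^{p^{-2n}}.
\end{align*}
\begin{enumerate}
\item Well-definedness uses \cite{DDMS1999}*{\S 4}: in a uniform group the map $x\mapsto x^{p^n}$ is a bijection $U\to P_{n+1}(U)$, and $P_{n+1}(U)$ is itself uniform.
\item The limits converge, and $\Lie(U)$ is a free $\ZZ_p$-module of rank $d(U)$, with $\ZZ_p$-action given by $\lambda\cdot x=x^\lambda$.
\item Powerfulness, i.e.\ $(\Lie,\Lie)\subseteq p\Lie$, follows from $[U,U]\subseteq U^p$, passed through the limit.
\item A continuous homomorphism of uniform groups commutes with $p$-power maps and limits. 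It therefore gives a Lie algebra homomorphism, so the construction is a functor.
\end{enumerate}

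\textbf{Lie algebra to group.} Given a powerful $\Lie$, I use the Baker--Campbell--Hausdorff series to define $x*y\coloneq\Phi(x,y)$. The key estimate is that the degree-$n$ homogeneous component of $\Phi$ has denominators of $p$-adic valuation at most about $(n-1)/(p-1)$. Each $n$-fold bracket, meanwhile, lies in $p^{n-1}\Lie$ by powerfulness, and for $p$ odd the gain beats the loss. Hence $\Phi(x,y)$ converges in $\Lie$. I expect this valuation bookkeeping to be the main obstacle: I would bound $v_p$ of the coefficients using the Dynkin form of $\Phi$ together with $v_p(n!)\le (n-1)/(p-1)$, and treat $p=2$ separately with the stronger hypothesis $[\Lie,\Lie]\subseteq 4\Lie$.

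Once convergence holds, the formal identities of $\Phi$ make $(\Lie,*)$ a pro-$p$ group in which $x^{*n}=nx$. It is torsion free and finitely generated, and commutators lie in $p\Lie$, which is the set of $p$-th powers. So $(\Lie,*)$ is uniform. Lie homomorphisms preserve $\Phi$, so this is again functorial.

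\textbf{Quasi-inverse.} Finally I check that the two functors are quasi-inverse.
\begin{enumerate}
\item Starting from a Lie algebra: in $(\Lie,*)$ one has $(x^{p^n}*y^{p^n})^{p^{-n}}=p^{-n}\Phi(p^nx,p^ny)\to x+y$. Similarly, the group commutator of $p^nx$ and $p^ny$ is $p^{2n}(x,y)+O(p^{3n})$, which recovers the bracket.
\item Starting from a group: I need the identity $xy=\Phi(x,y)$ in $\Lie(U)$. I would get it by comparing both sides on $P_{n}(U)$ modulo higher terms and passing to the limit. Alternatively, I would cite \cite{DDMS1999}*{Thm.~9.10}, where this identification is established.
\end{enumerate}
Naturality of both isomorphisms is clear from the constructions, and this gives the equivalence.
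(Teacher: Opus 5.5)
The paper gives no proof of this statement; it quotes it from Lazard and \cite{DDMS1999}, and your outline (limit operations from group to Lie algebra, the Baker--Campbell--Hausdorff series back) follows the route taken there. However, the step you flag as the main obstacle is the comparatively routine one. Even the crude bound from Dynkin's formula, $v_p(\text{denominator})\le (n-1)/(p-1)+v_p(n)$ for a degree-$n$ term, beats the factor $p^{n-1}$ (resp.\ $4^{n-1}$) coming from an $n$-fold bracket. Meanwhile, two genuinely substantial points are not supplied.

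The first is the identity $xy=\Phi(x,y)$ in $\Lie(U)$, which is what makes the composite $U\mapsto(\Lie(U),\Phi)$ the identity. Comparing the two multiplications on $P_n(U)$ modulo higher terms and letting $n\to\infty$ shows only two things: the rescaled laws $p^{-n}(p^nx\cdot p^ny)$ and $p^{-n}\Phi(p^nx,p^ny)$ both converge to $x+y$, and the rescaled commutators both converge to $(x,y)_{\Lie}$. Nothing in that comparison forces two group laws with the same first- and second-order data to coincide exactly, so a rigidity input is missing.

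In \cite{DDMS1999} that input is the logarithm. One embeds $U$ in a complete normed $\QQ_p$-algebra built from the completed group algebra $\ZZ_p[\![U]\!]$. There $\log$ converges on $U$, and $\log(xy)=\Phi(\log x,\log y)$ holds because it is a formal identity evaluated in a complete normed associative algebra. One then shows that $\log$ is injective on $U$. Using $\log(z^\lambda)=\lambda\log z$ and continuity, one also shows that $\log$ carries $+$ and $(\,,\,)_{\Lie}$ to the sum and to the commutator $ab-ba$.

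The same logarithm also gives the second missing point: that $(x,y)_{\Lie}=\lim[x^{p^n},y^{p^n}]^{p^{-2n}}$ is bilinear and satisfies the Jacobi identity, which your outline never checks. Note also that even extracting the $p^{2n}$-th root, and concluding that the limit lies in $U^p$, needs $[P_i(U),P_j(U)]\subseteq P_{i+j}(U)$, not just $[U,U]\subseteq U^p$.

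Your fallback of citing \cite{DDMS1999}*{Thm.~9.10} for $xy=\Phi(x,y)$ amounts to citing the equivalence itself. So as a proof, the proposal reduces to the same citation the paper makes.
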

In \cite{HuberKingsNaumann2011} A.~Huber, G.~Kings, and N.~Naumann proved an integral version of the Lazard isomorphism theorem, which originates in Lazard's seminal work \cite{Lazard1965} on $p$-adic analytic groups. {\color{black} The following lemma is not explicitly stated in their work, but not difficult to deduce from their results.
\begin{lem}
    Let $U$ be a uniform group and $\Lie=\Lie(U)$ the Lie algebra associated to $U$ and $M$ a finitely generated $\ZZ_p$-module, considered as a trivial $U$ and $\Lie$-module. Then for every $n$ there is an isomorphism $\phi^n_G(M):\rmH^n_{\rm cts}(G,M)\to \rmH^n(\Lie,M)$ natural in $M$. Such that for a short exact sequence $0\to M'\to M\to M''\to 0$ of finitely generated trivial $G$ modules the following square commutes:
    \begin{equation*}
        \begin{tikzcd}
            \rmH^n_{\rm cts}(G,M'')\arrow[r,"\delta"]\arrow[d,"\phi^{n}_G(M')"]&\rmH_{\rm cts}^{n+1}(G,M'')\arrow[d,"\phi^{n+1}_G(M')"]\\
            \rmH^n(\Lie,M'')\arrow[r,"\delta"]&\rmH^{n+1}(\Lie,M')
        \end{tikzcd}
    \end{equation*}
\end{lem}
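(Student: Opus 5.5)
The isomorphism $\phi^n_U(M)$ will come from the integral Lazard comparison of Huber--Kings--Naumann, made functorial in $M$ and extended to short exact sequences of coefficients. Recall that \cite{HuberKingsNaumann2011} construct, for a uniform group $U$ with $\Lie=\Lie(U)$, a comparison at the level of complexes. Concretely, there is a zigzag of quasi-isomorphisms, natural in the trivial coefficient module, between the continuous cochains $C^*_{\rm cts}(U,M)$ and the Chevalley--Eilenberg complex ${\rm CE}^*(\Lie,M)$. Over $\ZZ_p$ and for $M$ finitely generated, they pass through completed group rings $\ZZ_p[[U]]$ and the completed universal enveloping algebra. Their main theorem gives $\rmH^*_{\rm cts}(U,\ZZ_p)\cong \rmH^*(\Lie,\ZZ_p)$, and the same method applies to $\ZZ/p^k$. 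The plan is therefore to extract the chain-level statement rather than only the cohomological one. Everything should be realized as
\[
\Hom_{A}(P_\bullet,M)\;\simeq\;\Hom_{B}(Q_\bullet,M),
\]
where $P_\bullet\to\ZZ_p$ is a resolution over $A=\ZZ_p[[U]]$, $Q_\bullet\to \ZZ_p$ is the (completed) Koszul resolution over $B$, and the two are compared through a common completed algebra by filtered quasi-isomorphisms that do not depend on $M$.

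First, I would fix $P_\bullet$, $Q_\bullet$ and the comparison maps of complexes of free modules from HKN. These are independent of $M$, so applying $\Hom(-,M)$ gives maps of cochain complexes that are natural in $M$. Second, I would check that these are quasi-isomorphisms for every finitely generated trivial $M$. The case $M=\ZZ_p$ is HKN's theorem. For $M=\ZZ/p^k$ I would use the short exact sequence $0\to\ZZ_p\xrightarrow{p^k}\ZZ_p\to\ZZ/p^k\to0$, apply $\Hom(-,M)$ to the (free, hence exact) resolutions, and use the five lemma. Finite direct sums then cover all finitely generated $M$. Third, for $0\to M'\to M\to M''\to0$ both sides give short exact sequences of cochain complexes, because the terms of $P_\bullet$ and $Q_\bullet$ are free, or projective in the compact sense for the continuous side. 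The comparison is a morphism of these short exact sequences of complexes. The commutativity of the square with connecting maps is then the standard naturality of the snake-lemma connecting homomorphism with respect to morphisms of short exact sequences of complexes. This is where chain-level naturality is essential: an abstract isomorphism of cohomology groups would not suffice.

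The main obstacle is the first step, namely that the HKN isomorphism is genuinely induced by a chain map natural in coefficients, compatibly with the topology. On the group side, continuous cochains with compact coefficients must agree with $\Hom_{\ZZ_p[[U]]}^{\rm cts}(P_\bullet,M)$ for a resolution by finitely generated free $\ZZ_p[[U]]$-modules. This holds for finitely generated $M$ because $U$ is of type $FP_\infty$ (it is a Poincaré duality group), and Lazard's identification applies. HKN work with $\ZZ_p$ coefficients and intermediate completed algebras. I would first check that every map in their zigzag is a map of complexes of finitely generated free modules over the relevant completed rings. If some intermediate step only yields an isomorphism after inverting $p$ or a filtration argument, I would fall back on proving naturality in $M$ by base change: the comparison for $\ZZ_p$ is a quasi-isomorphism of perfect complexes, so $-\otimes^{\mathbf L}_{\ZZ_p}M$ preserves it. The identity $C^*(U,M)\simeq C^*(U,\ZZ_p)\otimes_{\ZZ_p}^{\mathbf L}M$ (valid for finitely generated $M$ since the complexes consist of finite free modules) then gives the natural isomorphism together with compatibility with connecting maps, as these come from distinguished triangles.
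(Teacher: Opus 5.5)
Your proposal is correct and follows essentially the same route as the paper. Both rest on the chain-level Huber--Kings--Naumann comparison, which is natural in the trivial coefficient module. In both, compatibility with $\delta$ comes from the induced morphism of short exact sequences of complexes. The one difference is how you reach arbitrary finitely generated $M$: the paper applies \cite{HuberKingsNaumann2011}*{Thm.~3.1.1} directly to every such trivial $M$, after noting that uniform groups are equi-$p$-valued and that a trivial action factors through the augmentation, hence extends to $\operatorname{Sat}(\ZZ_p[\![U]\!])$. Your bootstrap from $\ZZ_p$ via the five lemma or derived base change is therefore unnecessary, though harmless.
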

\begin{proof}
    Uniform groups are equi-$p$-valued in the sense of \cite{Lazard1965} with respect to the valuation associated to the lower $p$-central series (see beginning of the proof of \cite{HuberKingsNaumann2011}*{Thm. 3.3.3}) taking values in $\ZZ$. As the action of $G$ on $M$ is trivial, the homomorphism $\ZZ_p[\![G]\!]\to \operatorname{End}_{\ZZ_p}(M)$ factors through the augmentation map $\ZZ_p[\![G]\!]\to \ZZ_p$ and hence extends to $\operatorname{Sat}(\ZZ_p[\![G]\!])$ (see \cite{Lazard1965}*{Chap. I,\S 2.2.11}). Thus \cite{HuberKingsNaumann2011}*{Thm. 3.1.1} is applicable, yielding the required homomorphisms and their naturality in $M$. Let $0\to M'\to M\to M''\to 0$ be an exact sequence of finitely generated trivial $G$-modules. Then by \cite{Tate1976} there is a long exact sequence in continuous cochain cohomology. The construction of $\phi^n_G(\bl )$ on the level of cochain complexes implies that the square commutes, as claimed. By \cite{HuberKingsNaumann2011}*{Prop. 3.4.8} these complexes are quasi isomorphic. Hence the short exact sequence of modules yields a short exact sequence of the complexes, showing that it commutes with the connecting homomorphisms.
\end{proof}
} An immediate consequence of this generalization is the following proposition, which will help us to reduce a problem in the cohomology of uniform groups to a problem in linear algebra.
\begin{thm}
\label{thm:uniform Bocksteins agree}
    Let $U$ be a uniform pro-$p$ group and $\Lie(U)$ be its associated $\ZZ_p$-Lie algebra. Then the Bockstein spectral sequences $(\BS^*_r(U),\beta_r^*)$ and $(\BS^*_r(\Lie(U)),\mathfrak{b}_r^*)$ are naturally isomorphic.
\end{thm}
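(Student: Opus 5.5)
The plan is to show that the comparison isomorphisms of the preceding lemma assemble into a morphism of exact couples that is an isomorphism on both terms. A morphism of exact couples induces morphisms on all derived couples, and if it is an isomorphism on the $A$- and $E$-terms it stays one on every derived couple. So the whole statement reduces to a check on the initial exact couple.

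\textbf{The two exact couples.} For $U$, Construction~\ref{con:Bockstein Sequence of Complex} applied to $(C^*_{\rm cts}(U,\ZZ_p),p)$ gives
\begin{align*}
A_U=\bigoplus_n \rmH^n_{\rm cts}(U,\ZZ_p),\qquad E_U=\bigoplus_n\rmH^n_{\rm cts}(U,\FF_p).
\end{align*}
The structure maps are $p$, reduction mod $p$, and the connecting homomorphism $\delta$ for the sequence $0\to\ZZ_p\xrightarrow{p}\ZZ_p\to\FF_p\to 0$. Here I use Tate's long exact sequence to identify the cohomology of the cokernel complex $C^*_{\rm cts}(U,\FF_p)$ with $\rmH^*_{\rm cts}(U,\FF_p)$. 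For $\Lie=\Lie(U)$ the same construction, applied to ${\rm CE}^*(\Lie,\ZZ_p)$ with the analogous maps, gives $A_\Lie$ and $E_\Lie$.

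\textbf{The comparison maps.} Define $\Phi_A=\bigoplus_n\phi^n_U(\ZZ_p)$ and $\Phi_E=\bigoplus_n\phi^n_U(\FF_p)$, using the previous lemma with $M=\ZZ_p$ and $M=\FF_p$. Both are finitely generated trivial modules, so the lemma applies. Three compatibilities must be checked.
\begin{enumerate}
\item $\Phi_A$ commutes with multiplication by $p$. This holds by naturality in $M$ applied to the module endomorphism $p\colon\ZZ_p\to\ZZ_p$.
\item $\Phi_E\circ\pi^*=\pi^*\circ\Phi_A$. This holds by naturality applied to the reduction map $\ZZ_p\to\FF_p$.
\item $\Phi_A\circ\delta=\delta\circ\Phi_E$. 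This is exactly the commutative square of the lemma for the sequence $0\to\ZZ_p\to\ZZ_p\to\FF_p\to0$.
\end{enumerate}
All three maps preserve degrees in the appropriate sense, with $\delta$ raising degree by one on both sides. Hence $(\Phi_A,\Phi_E)$ is a morphism of exact couples. It is an isomorphism because each $\phi^n_U(M)$ is one.

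\textbf{Passing to derived couples.} By induction on $r$, an isomorphism of exact couples $\mathcal D\to\mathcal D'$ induces an isomorphism $\mathcal D^{(1)}\to\mathcal D'^{(1)}$ of derived couples. On $\alpha(A)$ it is just the restriction of $\Phi_A$. On $E'=\ker(f g)/\im(f g)$ it is the map induced by $\Phi_E$, which is well defined and bijective since $\Phi_E$ intertwines $f\circ g$ on both sides. Therefore $\BS_r^*(U)\cong\BS_r^*(\Lie(U))$ for every $r$, compatibly with the differentials $d_r=f^{(r)}\circ g^{(r)}$, that is, with $\beta_r^*$ and $\mathfrak b_r^*$. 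Naturality in $U$ comes from the functoriality of the Huber--Kings--Naumann comparison under morphisms of uniform groups, which correspond to morphisms of their Lie algebras.

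\textbf{Main obstacle.} The substantive issue is making sure the connecting maps really correspond. The Bockstein spectral sequence is defined from the cochain-level short exact sequence, while the lemma gives compatibility with the connecting maps in cohomology. These agree because Tate's continuous-cochain long exact sequence for $0\to\ZZ_p\to\ZZ_p\to\FF_p\to0$ is precisely the long exact sequence of the complex short exact sequence defining $\BS(U)$, and the Chevalley--Eilenberg side behaves the same way. Beyond this, I expect the only subtle point to be naturality in $U$, which I would take from the functoriality of the constructions in Huber--Kings--Naumann.
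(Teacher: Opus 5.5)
Your proposal is correct and follows the paper's route. The paper gives no separate proof: it calls the theorem an immediate consequence of the preceding Huber--Kings--Naumann lemma, whose comparison maps for $M=\ZZ_p$ and $M=\FF_p$ are natural in $M$ and compatible with the connecting map of $0\to\ZZ_p\to\ZZ_p\to\FF_p\to 0$, so they form an isomorphism of exact couples and hence of all derived couples. You spell out the three compatibility checks and the induction on derived couples that the paper leaves implicit. Naturality in $U$ is not proved by the paper's lemma either, and the later applications do not need it.
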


\section{Uniform quotients and groups with the uFM property}
\label{sec:Uniform quotients}
Having established the comparison between Bockstein spectral sequences of uniform groups and Lie algebras, we now use these tools to study uniform quotients. Recall from the introduction, that a pro-$p$ group $G$ is said to have the \emph{uniform Fontaine--Mazur property} (or short \emph{uFM property}) if $G$ does not admit a nontrivial uniform pro-$p$ quotient. 
\subsection{uFM groups and Bockstein spectral sequences}
Let $G$ be a pro-$p$ and $\pi:G\twoheadrightarrow U$ be the projection on a uniform pro-$p$ group and $\Lie$ be the Lie algebra associated to $U$. Then by Theorem~\ref{thm:uniform Bocksteins agree} and the functoriality of $\BS_r(\bl)$ we have the following commutative diagram for each $r\in \NN$:
\begin{equation}
\label{diag:Bockstein square}
    \begin{tikzcd}
        \BS_r^1(G)\arrow[d,"\beta_r^1"] & \BS_r^1(U)\arrow[l,swap,"\pi^*"]\arrow[d,"\beta_r^1"]&\BS_r^1(\Lie)\arrow[l,swap,"\sim"]\arrow[d,swap,"\mathfrak{b}_r^1"]\\
        \BS_r^2(G) & \BS_r^2(U)\arrow[l,swap,"\pi^*"]&\BS_r^2(\Lie)\arrow[l,swap,"\sim"]
    \end{tikzcd}  
\end{equation}
We consider the outer square and take linear duals. For $r=1$ this yields the following commutative square:
\begin{equation*}
    \begin{tikzcd}
            \rmH^1(G,\FF_p)^\vee\arrow[r,equal]&G/\Phi(G)\arrow[r,two heads]&\fLie_p\\
             \arrow[r,"\inf^\vee"]\rmH^2(G,\FF_p)^\vee \arrow[u,swap,"(\beta^1_1)^\vee"]&\rmH^2(U,\FF_p)^\vee\arrow[r,"(\bl\smallsmile \bl)^\vee"]&\Lambda^2(\fLie_p)\arrow[u,"-\dbb{\bl}_p"]
        \end{tikzcd}
\end{equation*}
Let $x_1,...,x_d$ be a minimal generating set of $G$ and set $e_i\coloneq \log(\pi(x_i))\in \Lie$. If $\rho$ is a relation with $\rho\equiv \prod_{j=1}^dx_j^{pa_j}\prod_{1\leq k<l\leq d}[x_l,x_k]^{a_{k,l}}\pmod{P_3^*(F)}$, as in Lemma~\ref{lem:relation presentation}. Then Propositions~\ref{prop:HigherBockstein for groups} and \ref{prop:higherBockstein Lie algebra} applied to the element $\tr_\rho\in \rmH^2(G,\FF_p)^\vee$ yield together with the commutativity of the diagram the following congruence in $\fLie$:
\begin{align}
\label{eq:Bockstein 1 relation}
    \sum_{j=1}^d a_j e_j\equiv \sum_{k<l}a_{k,l}\dbb{e_k,e_l}\pmod p
\end{align}
This relation is well known, when working with the graded Lie algebra associated to $p$-lower central series instead of with the $\ZZ_p$-Lie algebra associated to a uniform group. The translation is immediate, as this graded Lie algebra can be obtained from $\Lie$ using the filtration $(p^n\Lie)_n$. This provides a link between ours and J.~Labute's approach in~\cite{Labute2014}.

When the valuation of the $a_i$ is at least $1$, the higher Bockstein morphisms give a way to lift the relations modulo higher powers of $p$. This is made precise in the subsequent two propositions.
\begin{prop}
\label{prop:Bockstein 2 relation}
    If for all $j=1,..,d$ we have $a_j\in p\ZZ_p$, then the following congruence holds for any lift $\widehat{a}_{k,l}\in \ZZ_p$ of $a_{k,l}$:
    \begin{align*}
        \sum_{j=1}^d a_j e_j\equiv \sum_{k<l}\widehat{a}_{k,l}\dbb{e_k,e_l}\mod{(p^2,p\dbb{\fLie,\fLie})}
    \end{align*}
\end{prop}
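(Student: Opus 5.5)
We use the $r=2$ layer of the outer square in diagram \eqref{diag:Bockstein square}, the same way \eqref{eq:Bockstein 1 relation} came from the $r=1$ layer, and dualize.

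\textbf{Step 1: the relation survives to the second page.} Since $v_p(a_j)\geq 1$ for all $j$, Proposition~\ref{prop:HigherBockstein for groups} with $r=1$ shows that $\tr_\rho$ vanishes on the image of $\beta_1^1$. Hence it descends to a linear form $\bar\tr_\rho$ on $\BS^2_2(G)$, and on a class $\alpha$ represented by $\sum_j c_j\chi_j$ we have
\begin{align*}
\bar\tr_\rho(\beta_2^1(\alpha))=-\sum_j c_j\,a_j/p .
\end{align*}

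\textbf{Step 2: transport to $\Lie$.} Dualize the $r=2$ square and apply it to $\bar\tr_\rho\in \BS_2^2(G)^\vee$. Its image $\lambda\in\BS_2^2(\Lie)^\vee$ is the class of the image of $\tr_\rho$ in $\Lambda^2(\fLie_p)$ computed for $r=1$, namely $\sum_{k<l}a_{k,l}\,e_k\wedge e_l \bmod p$ (up to the sign convention $[x_l,x_k]$ versus $e_k\wedge e_l$, already fixed in \eqref{eq:Bockstein 1 relation}). By Proposition~\ref{prop:higherBockstein Lie algebra} with $r=2$:
\begin{itemize}
\item $\lambda$ is a genuine element of $\BS_2^2(\Lie)$ exactly when some lift $\widehat\lambda$ satisfies $\dbb{\widehat\lambda}\equiv0\pmod p$; we know this from \eqref{eq:Bockstein 1 relation}, because $a_j\equiv 0\pmod p$;
\item in that case $(\mathfrak b_2^1)^\vee([\lambda])=-[\dbb{\widehat\lambda}/p]\in\BS_2^1(\Lie)^\vee$.
\end{itemize}

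\textbf{Step 3: identify the target.} By Corollary~\ref{cor:Second page Bockstein Lie algebra}, $\BS_2^1(\Lie)^\vee\cong(\fLie_p)^{\rm ab}=\fLie/(p\fLie+\dbb{\fLie,\fLie})$. On the group side, $(\beta_2^1)^\vee(\bar\tr_\rho)$ evaluated through Step 1 corresponds to the image of $-\sum_j (a_j/p)\, e_j$ in this quotient, since the $e_j$ are dual to the $\chi_j$. Commutativity of the square then gives
\begin{align*}
\sum_j (a_j/p)\,e_j\equiv \dbb{\widehat\lambda}/p \pmod{p\fLie+\dbb{\fLie,\fLie}}.
\end{align*}
Multiplying by $p$ gives the congruence modulo $(p^2,p\dbb{\fLie,\fLie})$ for the particular lift $\widehat\lambda$.

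\textbf{Step 4: pass to an arbitrary lift.} A lift of the form $\sum\widehat a_{k,l}e_k\wedge e_l$ differs from $\widehat\lambda$ by an element of $p\Lambda^2(\Lie)$. The relevant lifts are those with $\dbb{\cdot}\equiv0\pmod p$, and in the paper's formulation this is automatic for any lift of $a_{k,l}$, since the congruence mod $p$ already holds. Changing the lift by $p\mu$ changes $\dbb{\widehat\lambda}$ by $p\dbb{\mu}\in p\dbb{\fLie,\fLie}$, which the modulus absorbs. So the statement holds for every lift $\widehat a_{k,l}$.

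\textbf{Main obstacle.} The delicate point is Step 2: checking that the duals of the isomorphisms in Theorem~\ref{thm:uniform Bocksteins agree} on the second page send $\bar\tr_\rho$ exactly to the class of that lift, and that the subquotient structures of $\BS_2$ on both sides match under $\pi^*$. I would handle this by working with the explicit subquotient descriptions: $\BS_2^2$ is the kernel of $\beta_1^2$ modulo the image of $\beta_1^1$, and dually it is a subquotient of $\Lambda^2(\fLie_p)$. Naturality of the $r=1$ identification then passes to the derived couple, by the functoriality of exact couples.
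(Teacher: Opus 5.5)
Your proposal is correct and follows the paper's route. The paper's proof is the one-line citation of the $r=2$ layer of diagram \eqref{diag:Bockstein square} together with Propositions~\ref{prop:HigherBockstein for groups} and \ref{prop:higherBockstein Lie algebra} and Corollary~\ref{cor:Second page Bockstein Lie algebra}, and your Steps 1--4 spell that argument out faithfully, including the check that changing the lift by $p\mu$ only changes $\dbb{\widehat\lambda}$ by an element of $p\dbb{\fLie,\fLie}$.
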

\begin{proof}
    This follows in the same manner as before from the commutative diagram (\ref{diag:Bockstein square}) together with Propositions~\ref{prop:HigherBockstein for groups} and \ref{prop:higherBockstein Lie algebra} and Corollary~\ref{cor:Second page Bockstein Lie algebra}.
\end{proof}
\begin{prop}
\label{prop:First r Bocksteins vanish}
    Assume that $\dbb{\fLie,\fLie}\subseteq p^{r-1}\fLie$ for some $r$. If $a_j\in p^{r-1}\ZZ_p$ for all $j=1,...,d$, then we have for any lift $\widehat{a}_{k,l}$ of $a_{k,l}$ to $\ZZ_p$
    \begin{align*}
        \sum_{j=1}^d a_j e_j\equiv \sum_{k<l}\widehat{a}_{k,l}\dbb{e_k,e_l}\pmod{p^r}.
    \end{align*}
    If $s\coloneq\min v_p(a_j)<r-1$, then the element in $G^{\rm ab}$ given by $\prod_{j=1}^d \overline{x}_j^{a_j}$ defines an element in the kernel of the induced map
    \begin{align*}
        G^{\rm ab}\otimes \ZZ/p^s\to U^{\rm ab}\otimes \ZZ/p^s\cong (\ZZ/p^s)^{d(U)}.
    \end{align*}
\end{prop}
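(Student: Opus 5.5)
The plan is to run the argument of Proposition~\ref{prop:Bockstein 2 relation} one page further. For the first claim we work on page $r$ of the diagram~\eqref{diag:Bockstein square} and feed in the hypothesis on $\dbb{\fLie,\fLie}$. For the second claim we use the identification of the Bockstein spectral sequence with $\ZZ_p$-cohomology.

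\textbf{First claim.} Suppose $\dbb{\fLie,\fLie}\subseteq p^{r-1}\fLie$. By Corollary~\ref{cor:Second page Bockstein Lie algebra} the differentials $\mathfrak{b}^1_s$ vanish for $s<r$. Hence $\BS^1_r(\Lie)=\rmH^1(\Lie,\FF_p)$ and $\BS^2_r(\Lie)^\vee$ is just the subspace of those $\lambda$ admitting a lift $\widehat\lambda$ with $\dbb{\widehat\lambda}\equiv 0\pmod{p^{r-1}}$. By the hypothesis this holds for every lift.

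Since $v_p(a_j)\ge r-1$, Proposition~\ref{prop:HigherBockstein for groups} (applied with $r-1$ in place of $r$) shows that $\tr_\rho$ descends to a linear form $\bar\tr_\rho$ on $\BS^2_r(G)$. Its value on $\beta^1_r(\alpha)$ is $-\sum_j c_j a_j/p^{r-1}$.

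Now take $\widehat\lambda=\sum_{k<l}\widehat a_{k,l}\,e_k\wedge e_l$. We need two facts.
\begin{enumerate}
\item The image of $\bar\tr_\rho$ under the dual of the pullback $\BS^2_r(G)\to\BS^2_r(U)\cong\BS^2_r(\Lie)$ is the class of this $\widehat\lambda$. Modulo $p$ this is the $r=1$ computation already carried out before~\eqref{eq:Bockstein 1 relation}. On page $r$ the class is still represented by the same $\lambda$, because the page is a subquotient.
\item By the dual form in Proposition~\ref{prop:higherBockstein Lie algebra}, $(\mathfrak b^1_r)^\vee([\lambda])=-[\dbb{\widehat\lambda}/p^{r-1}]$ in $\fLie/p$.
\end{enumerate}
Dualizing the outer square on page $r$ and evaluating on the dual basis $\chi_j\leftrightarrow e_j$ then gives
\begin{align*}
\sum_j \frac{a_j}{p^{r-1}}e_j\equiv \frac{1}{p^{r-1}}\sum_{k<l}\widehat a_{k,l}\dbb{e_k,e_l}\pmod p .
\end{align*}
Multiplying by $p^{r-1}$ gives the claimed congruence modulo $p^r$.

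Independence of the lift: changing $\widehat a_{k,l}$ by a multiple of $p$ changes the right side by an element of $p\dbb{\fLie,\fLie}\subseteq p^r\fLie$, so the choice of lift does not matter.

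\textbf{Second claim.} Suppose $s=\min_j v_p(a_j)<r-1$. The differentials $\mathfrak b_t$ vanish for $t<r$. By Theorem~\ref{thm:uniform Bocksteins agree}, so do the $\beta_t$ for $U$. Hence $\rmH^1(U,\ZZ_p)\to\rmH^1(U,\FF_p)$ is surjective modulo $p^{r-1}$, in the sense that every $\chi\in\rmH^1(U,\FF_p)$ lifts to $\rmH^1(U,\ZZ/p^{r-1})$, and therefore to $\rmH^1(U,\ZZ/p^{s+1})$.

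Equivalently, dually, run the argument of the first claim on page $s+1$. Here $\tr_\rho$ survives to $\BS^2_{s+1}(G)$ by Proposition~\ref{prop:HigherBockstein for groups}. Its image in $\BS^2_{s+1}(\Lie)$ is zero, because $\mathfrak b^1_{s+1}=0$ on the Lie side, while $\bar\tr_\rho\circ\beta^1_{s+1}$ is the functional $\alpha\mapsto -\sum_j c_j a_j/p^s$, which is nonzero on $G$.

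The commutative square then forces the following. For each homomorphism $\chi:U\to\ZZ/p^s$, the pullback $\chi\circ\pi$ satisfies $\sum_j a_j\,\chi(\pi(x_j))\equiv 0\pmod{p^s}$. The reason is that the connecting map into $\rmH^2(G,\ZZ_p)$ evaluated on $\rho$ computes $\sum_j a_j\chi(x_j)$, and this must come from $U$, where it vanishes. This is precisely the statement that $\prod_j\bar x_j^{a_j}$ maps to $0$ in $U^{\rm ab}\otimes\ZZ/p^s$. Freeness, $U^{\rm ab}\otimes\ZZ/p^s\cong(\ZZ/p^s)^{d(U)}$, holds because $U^{\rm ab}$ is a quotient of the free module $\Lie/\,[\Lie,\Lie]$ and its rank is $d(U)$ modulo $p^{r-1}$. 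A shortcut is that $p^{r-1}$ kills the bracket, so $U^{\rm ab}/p^{r-1}\cong\Lie/p^{r-1}$.

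\textbf{Main obstacle.} The delicate point is the bookkeeping in the first claim: verifying that the dual of the restriction on the page-$r$ subquotients sends $\bar\tr_\rho$ exactly to the class of $\sum\widehat a_{k,l}e_k\wedge e_l$. This requires comparing the explicit cocycle construction from the proof of Proposition~\ref{prop:HigherBockstein for groups} with the Lazard comparison isomorphism. I would do this by checking that both are determined on page $1$ and are compatible with passage to subquotients.
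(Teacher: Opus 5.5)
Your first part is correct and follows the paper's argument. All Lie-side differentials $\mathfrak b^1_t$ with $t<r$ vanish, and $\bar\tr_\rho$ lives on $\BS^2_r(G)$ by Proposition~\ref{prop:HigherBockstein for groups} applied with $r-1$. Its image on the Lie side is the page-$1$ class of $\sum a_{k,l}\,e_k\wedge e_l$ carried to the subquotient. Proposition~\ref{prop:higherBockstein Lie algebra} together with the page-$r$ square \eqref{diag:Bockstein square} then gives the congruence after multiplying by $p^{r-1}$. Your remark on independence of the lift is also right.

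The second part does not work as written.

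\textbf{The conclusion is vacuous.} The congruence you end with, $\sum_j a_j\chi(\pi(x_j))\equiv 0\pmod{p^s}$, holds for the trivial reason that $p^s$ divides every $a_j$. For the same reason $\prod_j\bar x_j^{a_j}$ is already $0$ in $G^{\rm ab}\otimes\ZZ/p^s$. So none of your spectral-sequence input is actually used. What the paper's proof extracts is one level deeper: $\sum_j (a_j/p^s)e_j\equiv 0\pmod p$ in $\fLie_p$, i.e.\ $\prod_j\bar x_j^{a_j}$ dies in $U^{\rm ab}\otimes\ZZ/p^{s+1}$.

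\textbf{A key step is false.} Your claim that the image of $\bar\tr_\rho$ in $\BS^2_{s+1}(\Lie)$ is zero is wrong. That image is the class of $\sum a_{k,l}\,e_k\wedge e_l$, which need not vanish. What vanishes is $(\mathfrak b^1_{s+1})^\vee$ of that class, because $s+1<r$. Commutativity of the dualized square then forces the image in $\fLie_p$ of $(\beta^1_{s+1})^\vee(\bar\tr_\rho)$ to vanish. That element is represented by $-\sum_j (a_j/p^s)\bar x_j$, so this gives the desired statement; it is the paper's argument. Whether this functional is nonzero on $\BS^1_{s+1}(G)$ plays no role, and it is not guaranteed anyway.

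\textbf{Your lifting idea also works, one power of $p$ further.} This route is more elementary than the paper's. Vanishing of $\beta^1_t$ on $U$ for all $t\le r-1$ means every $\chi\in\rmH^1(U,\FF_p)$ lifts to $\ZZ/p^{r}$, not just $\ZZ/p^{r-1}$. Since $s+2\le r$, $\chi$ lifts to some $\tilde\chi\colon U\to\ZZ/p^{s+2}$. As $\prod_j x_j^{pa_j}\in R[F,F]$, the homomorphism $\tilde\chi\circ\pi$ kills it. Hence $\sum_j pa_j\tilde\chi(\pi(x_j))\equiv 0\pmod{p^{s+2}}$, i.e.\ $\sum_j (a_j/p^s)\chi(\pi(x_j))=0$ in $\FF_p$ for all $\chi$. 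With lifts only to $\ZZ/p^{s+1}$, as you wrote, you again get only the trivial congruence.

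\textbf{Minor point.} $\Lie/[\Lie,\Lie]$ is not free. The freeness of $U^{\rm ab}\otimes\ZZ/p^t$ for $t\le r$ comes from $[\Lie,\Lie]\subseteq p^r\Lie$.
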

\begin{proof}
    We have that $\BS_r^1(\Lie)\cong \fLie_p$ by applying Proposition~\ref{prop:higherBockstein Lie algebra}. The same arguments as before yield the first statement. The second one follows since $(\beta_s^1)^\vee(\tr_\rho)$ is contained in the kernel of $\BS_s^{1}(G)^\vee \to \BS_s^1(\Lie)^\vee\cong \fLie_p$, as otherwise $\BS_{s+1}^1(\Lie)\not\cong \fLie_p$. Translating this fact back to the group-theoretic context gives the second statement.
\end{proof}

\subsection{An example with four generators}
{\color{black} Without posing further assumptions on $\Lie^{\flat}$ it is hard to satisfy the conditions of Proposition~\ref{prop:First r Bocksteins vanish}. The following example shows that for suitable presentations, this can be achieved without any additional assumptions on a uniform quotient $U$. }
\begin{exmp}
\label{exmp:four generators five relations}
    Let $\langle x_1,...,x_4\mid \rho_1,...,\rho_5\rangle $ be the presentation of a pro-$p$ group $G$ with 
    \begin{align*}
        \rho_1\equiv x_1^{p^n}[x_1,x_2],\quad \rho_2\equiv x_2^{p^n}[x_2,x_3],\quad \rho_3\equiv x_3^{p^n}[x_3,x_4]\\
        \quad \rho_4\equiv x_4^{p^n}[x_4,x_1],\quad \text{and}\quad  \rho_5\equiv [x_1,x_3][x_2,x_4]
    \end{align*}
    where all the congruences are taken modulo $P_3^*(F)$. Then each uniform quotient of $G$ is trivial.
    
    Let $\pi:G\twoheadrightarrow U$ be the projection onto a uniform quotient, $\Lie=\Lie(U)$, and $e_i=\log(\pi(x_i))$. For $n=0$ the claim is clear. {\color{black} For $n=1$ we set $a=\dbb{e_1,e_3}\equiv -\dbb{e_2,e_4}\pmod p$ and find
    \begin{align*}
        e_1\equiv \dbb{e_1,e_2}\equiv \dbb{e_1,\dbb{e_2,e_3}}\equiv \dbb{e_2,a}+\dbb{e_1,e_3}\equiv \dbb{e_2,a}+a\pmod p
    \end{align*}
    and similarly $e_2\equiv -\dbb{e_3,a}-a,\, e_3\equiv \dbb{e_4,a}+a,\, e_4\equiv \dbb{e_1,a}+a$.
    The Jacobi identity for $a,e_1$ and $e_3$ yields
    \begin{align*}
        0&\equiv \dbb{a,\dbb{e_1,e_3}}\equiv \dbb{\dbb{a,e_1},e_3}+\dbb{e_1,\dbb{a,e_3}}\equiv \dbb{a-e_4,e_3}+\dbb{e_1,e_2+a}\\
        &\equiv e_3+e_2+a+e_1+e_4-a\equiv e_1+e_2+e_3+e_4\pmod p
    \end{align*}
    Substituting this into the original congruences we find
    \begin{align*}
        e_1&\equiv \dbb{e_1,e_2}\equiv \dbb{-e_3,e_2}+\dbb{-e_4,e_2}\equiv e_2-a\quad\text{and}\\
        e_3&\equiv \dbb{e_3,e_4}\equiv \dbb{-e_1,e_4}+\dbb{-e_2,e_4}\equiv e_4+a.
    \end{align*}
    yielding the additional relation $e_1-e_2+e_3-e_4\equiv 0\pmod p$. This shows that $\dim \fLie_p\leq 2$. But $\fLie_p$ is perfect. This is a contradiction. 
    
    For $n\geq 2$ we have $0\equiv \dbb{e_i,e_{i+1}}\pmod p$ for $i=1,..,4$ and $\dbb{e_1,e_3}\equiv -\dbb{e_2,e_4}\pmod p$. We write $\dbb{e_1,e_3}=\lambda_1e_1+...+\lambda_4e_4$ for suitable $\lambda_i\in \ZZ_p$. Then the Jacobi identity implies
    \begin{align*}
        0\equiv \dbb{e_1,\dbb{e_2,e_3}}\equiv \dbb{\dbb{e_1,e_2},e_3}+\dbb{e_2,\dbb{e_1,e_3}}\equiv \lambda_4\dbb{e_2,e_4}\equiv -\lambda_4\dbb{e_1,e_3}.
    \end{align*}
    Thus if $\dbb{e_1,e_3}\not\equiv 0\pmod p$, then $\lambda_4=0$. The same argument applies for the other coefficients and sees that $\dbb{e_1,e_3}\equiv \dbb{e_2,e_4}\equiv 0\pmod p$ and thus $\fLie_p$ is abelian and we conclude $\dbb{\fLie,\fLie}\subseteq p^1\fLie$. One proceeds by induction using Proposition~\ref{prop:First r Bocksteins vanish} to see that $\dbb{\fLie,\fLie}\subseteq p^{n-1}\fLie$. The claim then follows from the case $n=1$.}
\end{exmp}

\section{Koch-type groups with three generators}
\label{sec:Koch type groups with three generators}
We now specialize the general method developed in Section \ref{sec:Uniform quotients} to Koch-type groups with three generators corresponding to the groups $G_{K,S}$ with $|S|=3$. {\color{black} We start by recalling some well-known results on $\ZZ_p$-Lie algebras with finite abelianization and prove Proposition~\ref{prop:FAb Lie algebra dim3}, which classifies those of dimension $3$. Using this result, we present a method to study pro-$p$ groups, which are weakly presented by a linking diagram with three vertices. Finally we prove the infinitude of some $G_{K,S}$ with $|S|=3$, that also have the uFM property.}
\subsection{three-dimensional \texorpdfstring{$\ZZ_p$}{Zp}-Lie algebras with finite abelianization}
A reference for the first statement can be found in \cite{HajirMaire2022}*{Prop. 3.18 and Cor. 3.19} and for the second in \cite{Maire2018}*{Cor. 2.12}.
\begin{prop}
\label{prop:FAB uniform groups}
    Let $U$ be a uniform pro-$p$ group and $\Lie$ its associated $\ZZ_p$-Lie algebra. Then the following are equivalent:
    \begin{center}
        $U^{\rm ab}$ is finite
        $\Leftrightarrow$ $U$ is FAb
        $\Leftrightarrow$ $\Lie$ is FAb
        $\Leftrightarrow$ $\Lie^{\rm ab}$ is finite
        $\Leftrightarrow$ $\Lie\otimes_{\ZZ_p} \QQ_p$ is perfect
    \end{center}
    Furthermore, if $U$ is nontrivial and FAb, then $d(U)\geq 3$.
\end{prop}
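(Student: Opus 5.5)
The equivalences form a chain, and I will prove each link through the Lie correspondence together with the Bockstein comparison of Theorem~\ref{thm:uniform Bocksteins agree}. The final claim on $d(U)$ comes from a small-dimension classification.

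\textbf{$U^{\rm ab}$ finite $\Leftrightarrow$ $U$ FAb.} Any open subgroup of a uniform group contains an open uniform subgroup $V=U^{p^k}$. The Lazard correspondence identifies $\Lie(U^{p^k})$ with $p^k\Lie(U)$, which is isomorphic to $\Lie(U)$ as a Lie algebra after rescaling by $\QQ_p$. So it suffices to show that $U^{\rm ab}$ is finite if and only if $\Lie^{\rm ab}=\Lie/[\Lie,\Lie]$ is finite. For this step I would quote \cite{HajirMaire2022}*{Prop.~3.18} rather than reprove it.

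\textbf{$U^{\rm ab}$ finite $\Leftrightarrow$ $\Lie^{\rm ab}$ finite.} This is also a direct argument. The torsion-free rank of $U^{\rm ab}$ equals $\dim_{\QQ_p}\rmH^1_{\rm cts}(U,\QQ_p)$. The Huber--Kings--Naumann comparison (the lemma preceding Theorem~\ref{thm:uniform Bocksteins agree}), applied with $M=\ZZ_p$ and tensored with $\QQ_p$, identifies this with $\dim\rmH^1(\Lie,\QQ_p)=\dim\Hom(\Lie/[\Lie,\Lie],\QQ_p)$. Hence the two ranks agree.

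\textbf{$\Lie$ FAb $\Leftrightarrow$ $\Lie^{\rm ab}$ finite $\Leftrightarrow$ $\Lie\otimes\QQ_p$ perfect.} $\Lie^{\rm ab}$ is a finitely generated $\ZZ_p$-module, so it is finite exactly when $\Lie^{\rm ab}\otimes\QQ_p=(\Lie\otimes\QQ_p)^{\rm ab}$ vanishes, i.e. when $\Lie\otimes\QQ_p$ is perfect. For the FAb notion, open subalgebras of $\Lie$ are lattices $\mathfrak{h}$ with $\mathfrak{h}\otimes\QQ_p=\Lie\otimes\QQ_p$. Each such $\mathfrak{h}$ then has finite abelianization by the same argument, which closes the chain.

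\textbf{$d(U)\geq 3$.} Let $U$ be nontrivial and FAb. Then $\mathfrak{g}=\Lie\otimes\QQ_p$ is a nonzero perfect Lie algebra of dimension $d(U)$, since $\Lie$ is free of rank $d(U)$. A solvable Lie algebra is never perfect, so $\mathfrak{g}$ has a nonzero Levi factor, which is semisimple. A nonzero semisimple Lie algebra over a field of characteristic $0$ has dimension at least $3$: there are no nonabelian Lie algebras of dimension $1$, and the only nonabelian one of dimension $2$ is solvable. Therefore $d(U)=\dim\mathfrak{g}\geq 3$.

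The main obstacle is making the dimension count of $\rmH^1$ precise, namely passing from the integral comparison to ranks. It is safer to simply cite \cite{HajirMaire2022} and \cite{Maire2018}, as the paper indicates.
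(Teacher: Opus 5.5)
Your proposal is correct, but it takes a different route from the paper, which gives no argument: it cites Hajir--Maire (Prop.~3.18, Cor.~3.19) for the equivalences and Maire (Cor.~2.12) for $d(U)\geq 3$. You give a self-contained proof instead. You reduce FAb to the open uniform subgroups $U^{p^k}$ via $\Lie(U^{p^k})=p^k\Lie$ and $p^k\Lie\otimes_{\ZZ_p}\QQ_p=\Lie\otimes_{\ZZ_p}\QQ_p$. You then compare $U^{\rm ab}$ with $\Lie^{\rm ab}$ through degree-one cohomology. Finally, you get $d(U)\geq 3$ from the fact that a perfect nonzero Lie algebra in characteristic $0$ cannot have dimension at most $2$. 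This buys transparency: it shows the proposition is essentially formal once the Lazard correspondence is available, while the citation keeps the paper short.

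Two points would tighten it. First, in the reduction you should say why finiteness of $V^{\rm ab}$ for $V=U^{p^k}\subseteq H$ gives finiteness of $H^{\rm ab}$: the image of $V$ in $H^{\rm ab}$ is a quotient of $V^{\rm ab}$ and has finite index. Your remark that you would quote Hajir--Maire for that step is superfluous, since your next paragraph proves it.

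Second, both the Huber--Kings--Naumann comparison and the Levi decomposition are more than you need. For the abelianization, $U^{\rm ab}$ is finite iff $\Hom_{\rm cts}(U,\ZZ_p)=0$. The equivalence between uniform groups and powerful $\ZZ_p$-Lie algebras, under which $\ZZ_p$ corresponds to the abelian Lie algebra $\ZZ_p$, identifies this with the set of Lie homomorphisms $\Lie\to\ZZ_p$. That set is zero iff $\Lie^{\rm ab}$ is finite. For the bound on $d(U)$: if $\operatorname{rk}_{\ZZ_p}\Lie\leq 2$, then $[\Lie,\Lie]$ is generated by at most one bracket of basis vectors. It therefore has rank strictly smaller than $\Lie$, so $\Lie^{\rm ab}$ is infinite.
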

We shall restrict ourselves to $\ZZ_p$-Lie algebras of dimension $3$, i.e., $\Lie\cong \ZZ_p^3$ as $\ZZ_p$ modules, with finite abelianization. We believe that the following proposition is well known, but we have not been able to find any account of it in the literature.
\begin{prop}
\label{prop:FAb Lie algebra dim3}
    Let $\Lie$ be a Lie algebra of dimension $3$ for $p$ odd. Fix a representative $\beta$ of the nontrivial element of $\ZZ_p^\times /\ZZ_p^{\times 2}\cong \ZZ/2$. If $\Lie^{\mathrm{ab}}$ is finite, then there exists a basis $e_1,e_2,e_3$ of $\Lie$, non-negative integers $v_1,v_2,v_3\in \mathbb{N}_0$ and $s\in \{0,1\}$ such that
    \begin{align*}
        [e_1,e_2]=\beta^{s}p^{v_3}e_3,\quad [e_3,e_1]=p^{v_2}e_2,\quad [e_2,e_3]=p^{v_1}e_1
    \end{align*}
\end{prop}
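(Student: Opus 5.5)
Since $\Lie^{\rm ab}$ is finite, Proposition~\ref{prop:FAB uniform groups} shows that $\Lie_{\QQ_p}=\Lie\otimes\QQ_p$ is a perfect three-dimensional Lie algebra, and hence simple. The plan is to transport the classical description of three-dimensional simple Lie algebras via the cross product to $\ZZ_p$, and then to diagonalize a symmetric bilinear form over $\ZZ_p$.

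\textbf{Step 1: the bracket as a twisted cross product.} Fix a $\ZZ_p$-basis of $\Lie$ and an orientation, i.e.\ a generator $\omega$ of $\Lambda^3\Lie\cong\ZZ_p$. This gives an isomorphism $\Lambda^2\Lie\cong\Lie^\vee$, $x\wedge y\mapsto \omega(x\wedge y\wedge\cdot)$. The bracket is a linear map $\Lambda^2\Lie\to\Lie$, so it corresponds to a $\ZZ_p$-bilinear form $B$ on $\Lie^\vee$, or equivalently a $3\times 3$ matrix $M$ with $[x,y]=M(x\times y)$ in coordinates. We then impose the Jacobi identity. The standard computation for three-dimensional Lie algebras (Bianchi's) shows that it forces $M=S+\operatorname{ad}(v)$ with $S$ symmetric and $Sv=0$, where $\operatorname{ad}(v)$ denotes the antisymmetric part. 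If $v\neq0$, then $v^\perp$ (dual sense) spans an ideal containing $[\Lie,\Lie]$, which after tensoring with $\QQ_p$ is a proper subspace. This contradicts perfectness, so $v=0$ and $M$ is symmetric. Moreover $[\Lie_{\QQ_p},\Lie_{\QQ_p}]=\operatorname{im}M$, so perfectness is equivalent to $\det M\neq 0$.

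\textbf{Step 2: change of basis.} Under a change of basis $g\in\GL_3(\ZZ_p)$, the matrix $M$ transforms as $M\mapsto \det(g)^{-1}\, g^{-1}Mg^{-T}$, the congruence action twisted by a unit. Since $p$ is odd, a nondegenerate symmetric matrix over $\ZZ_p$ is congruent to a diagonal matrix $\operatorname{diag}(u_1p^{v_1},u_2p^{v_2},u_3p^{v_3})$ with $u_i\in\ZZ_p^\times$; this is the Jordan splitting of quadratic forms over $\ZZ_p$. In the corresponding basis $e_1,e_2,e_3$ we obtain $[e_2,e_3]=u_1p^{v_1}e_1$, $[e_3,e_1]=u_2p^{v_2}e_2$ and $[e_1,e_2]=u_3p^{v_3}e_3$.

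\textbf{Step 3: normalizing the units.} Rescaling $e_i\mapsto\lambda_ie_i$ changes $u_1$ to $u_1\lambda_2\lambda_3/\lambda_1$, and cyclically for the others. We choose $\lambda_1=u_2u_3$, $\lambda_2=u_1u_3$ and $\lambda_3=u_1u_2$, which turns all three units into $u_1u_2u_3$. Scaling every $e_i$ by a common $\mu$ then multiplies all three units by $\mu$, so we can make them equal to $1$ except for the common factor. It remains to move the residual square class to a single coefficient. Replacing $(\lambda_i)$ by a choice with $\lambda_1\lambda_2\lambda_3$ adjusted multiplies the $i$-th unit by $\prod_j\lambda_j/\lambda_i^{2}$. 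These factors are all congruent modulo squares, so only one square class $\beta^s$ survives, and it can be placed on $[e_1,e_2]$ by taking $\lambda_3$ appropriately.

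The main obstacle is Step 1: justifying that Jacobi forces $M$ to be symmetric and that the twisted congruence action is exactly as stated. I would verify this carefully via the identity $\operatorname{ad}$-invariance of $\omega$, or directly on a basis. The diagonalization in Step 2 is standard because $p$ is odd.
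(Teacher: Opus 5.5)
Your route is the same as the paper's: Jacobi makes the structure matrix symmetric (you use Bianchi's decomposition $M=S+\operatorname{ad}(v)$, $Sv=0$, where the paper cites Jacobson), perfectness of $\Lie\otimes\QQ_p$ makes it nondegenerate, a change of basis acts by a unit-twisted congruence, and one diagonalizes over $\ZZ_p$ for $p$ odd. Steps 1 and 2 are fine. One small correction: with $x=gx'$, the identity $(gx')\times(gy')=\det(g)\,g^{-T}(x'\times y')$ gives $M'=\det(g)\,g^{-1}Mg^{-T}$. This differs from your formula only by the square unit $\det(g)^2$, so Step 2 is unaffected.

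Step 3, however, fails as written.

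\begin{itemize}
\item The computation is wrong. With $\lambda_1=u_2u_3$, $\lambda_2=u_1u_3$, $\lambda_3=u_1u_2$, the new units are $u_i\lambda_j\lambda_k/\lambda_i=u_i^3$, not $u_1u_2u_3$.
\item More importantly, no rescaling can make the three units equal in general. Write $\Lambda=\lambda_1\lambda_2\lambda_3$. Rescaling sends $u_i\mapsto u_i\Lambda/\lambda_i^2$, so all three square classes are shifted by the same class $[\Lambda]$.
\item For the same reason your last sentence is false. If $u_1=\beta$ and $u_2=u_3=1$, rescaling only reaches the square-class patterns $(\beta,1,1)$ and $(1,\beta,\beta)$. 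The nonsquare can therefore never be moved onto $[e_1,e_2]$ by choosing $\lambda_3$.
\end{itemize}

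What is missing is a permutation of the basis. The paper uses one: it multiplies by $\lambda\in\{1,\beta\}$ and then permutes the diagonal entries.

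A clean repair goes as follows. Since $p$ is odd, two of the three $u_i$ lie in the same square class. The cyclic substitution $(e_1,e_2,e_3)\mapsto(e_2,e_3,e_1)$ cyclically permutes the coefficient triple without introducing signs. It also permutes the $v_i$, which the statement allows. So you may assume $u_1u_2\in\ZZ_p^{\times 2}$.

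Now let $s\in\{0,1\}$ be such that $u_1u_3\in\beta^s\ZZ_p^{\times 2}$. The equations
\[
\lambda_1^2=\beta^s/(u_2u_3),\qquad \lambda_2^2=\beta^s/(u_1u_3),\qquad \lambda_3^2=1/(u_1u_2)
\]
are then solvable in $\ZZ_p^\times$. After changing the sign of $\lambda_1$ if necessary, you get $\lambda_1\lambda_2\lambda_3=\beta^s/(u_1u_2u_3)$. The new units $u_i\Lambda/\lambda_i^2$ are then $1,1,\beta^s$, which is exactly the claimed normal form. With this fix your argument is complete.
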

\begin{proof}
\color{black}
    The proof essentially follows \cite{Jacobson1962}*{I.4 (e)}. Pick a basis $\{e_1,e_2,e_3\}$ of $\Lie$ as free $\ZZ_p$-module. We set $f_1\coloneq [e_2,e_3]$, $f_2\coloneq [e_3,e_1]$ and $f_3\coloneq [e_1,e_2]$ and write 
    \begin{align*}
        f_i={\sum}_{j=1}^3\alpha_{ij}e_j
    \end{align*}
    for suitable $\alpha_{ij}\in \ZZ_p$. This yields a matrix $A=(\alpha_{ij})\in \ZZ_p^{3\times 3}$, which is symmetric by the Jacobi identity. Since $\Lie\otimes_{\ZZ_p}\QQ_p$ is perfect, it follows that $\det A\neq 0$. Similar to the proof of \cite{Jacobson1962}*{I.4 (e)} there is a basis $\{e_1',e_2',e_3'\}$ of $\Lie$ with associated matrix $A'$ if and only if $A'=\lambda N^TAN$ for suitable $\lambda\in \ZZ_p^\times$ and $N\in \GL_3(\ZZ_p)$.

    This type of equivalence relation is rather well studied in the theory of quadratic forms. From \cite{OMeara1973}*{\S 92.} it follows that there is a matrix $N\in \GL_3(\ZZ_p)$ such that $N^TAN$ is diagonal. We write the diagonal entries as $\beta^{s_i}u_i^2p^{v_i}$ for $i=1,2,3$, $u_i\in \ZZ_p^\times$ and $s_i\in\{0,1\}$. By multiplying with $\lambda\in \{1,\beta\}$ we can assume that $s_i=0$ for all $i$ or $s_i=1$ for exactly one $i$, which by permuting of the entries can be put in the third position. We have
    \begin{align*}
    A\sim 
        \medmatrix{
            u_1^{-1}&0&0\\
            0&u_2^{-1}&0\\
            0&0&u_3^{-1}
        }^T
        (\lambda N^TAN)
        \medmatrix{
            u_1^{-1}&0&0\\
            0&u_2^{-1}&0\\
            0&0&u_3^{-1}
        }=\medmatrix{
            p^{v_1}&0&0\\
            0&p^{v_2}&0\\
            0&0&\beta^{s}p^{v_3}
        }
    \end{align*}
    By picking the associated $\ZZ_p$-basis of $\Lie$, the desired statement follows.
\end{proof}
A consequence of Proposition~\ref{prop:FAb Lie algebra dim3} is that, in special cases, it is possible to deduce from the isomorphism class of a three-dimensional $\FF_p$-Lie algebra, whether it is the reduction of a FAb $\ZZ_p$-Lie algebra. The following corollary makes this precise.
\begin{cor}
\label{cor:Mod p reduction of FAb LieAlgebras dim3}
    Let $\Lie$ be a three-dimensional $\ZZ_p$-Lie algebra with finite abelianization. We set $\Lie_p\coloneq \Lie\otimes \FF_p$. Then the following hold:
    \begin{enumerate}
        \item \label{it:one dimensional derived implies central} If $\dim_{\FF_p}\mathfrak{g}_p'=1$, then $\Lie_p'\subseteq \mathfrak{z}(\Lie_p)$. In particular, all such Lie algebras are isomorphic.
        \item \label{it:two dimensional derived implies iso class} If $\dim_{\FF_p}\mathfrak{g}_p'=2$, then the conjugacy class in $\mathrm{PGL}_2(\FF_p)$ associated to $\mathfrak{g}_p$ is generated by 
        \begin{align*}
            \medmatrix{
                0&-1\\
                1&0
            }
            \qquad \text{or}\qquad 
            \medmatrix{
                0&-\beta\\
                1&0
            }
        \end{align*}
        where $\beta$ is a non-square in $\FF_p$ and there are only two possible isomorphism classes.
    \end{enumerate}
\end{cor}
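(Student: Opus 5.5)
The plan is to reduce everything to the normal form of Proposition~\ref{prop:FAb Lie algebra dim3} and then read off the structure of $\Lie_p=\Lie\otimes\FF_p$. Choose a basis $e_1,e_2,e_3$ with
\begin{align*}
[e_1,e_2]=\beta^{s}p^{v_3}e_3,\quad [e_3,e_1]=p^{v_2}e_2,\quad [e_2,e_3]=p^{v_1}e_1 .
\end{align*}
Reducing modulo $p$, the bracket of $\Lie_p$ is obtained by keeping only those structure constants with $v_i=0$. So $\Lie_p'$ is spanned by $\{\bar e_i : v_i=0\}$, and $\dim\Lie_p'$ equals the number of indices $i$ with $v_i=0$. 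The remaining work is a case analysis on this number.

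For \eqref{it:one dimensional derived implies central}, exactly one $v_i$ vanishes. By symmetry of the normal form, permuting indices and absorbing units, I will arrange that it is $v_3$; the case $v_i=0$ with $i\ne 3$ is handled the same way, since the unit $\beta^s$ plays no role. Then $\Lie_p$ has $[\bar e_1,\bar e_2]=u\bar e_3$ with $u\in\FF_p^\times$, and all other brackets vanish. Hence $\Lie_p'=\FF_p\bar e_3$, which is central because $[\bar e_3,\bar e_1]=[\bar e_3,\bar e_2]=0$. Rescaling $\bar e_3$ by $u$ identifies $\Lie_p$ with the Heisenberg algebra, so all such $\Lie_p$ are isomorphic.

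For \eqref{it:two dimensional derived implies iso class}, exactly two $v_i$ vanish. The case requiring care is the one where the non-vanishing index interacts with $\beta^s$. I will permute so that $v_3\geq1$, which requires re-running the normalization of the proof of Proposition~\ref{prop:FAb Lie algebra dim3} with the non-square placed appropriately; the diagonal form allows any position for the nonsquare after adjusting $\lambda$. This yields $\Lie_p'=\langle\bar e_1,\bar e_2\rangle$, an abelian ideal since $[\bar e_1,\bar e_2]=0$, and $\Lie_p=\FF_p\bar e_3\ltimes\Lie_p'$. The action of $\mathrm{ad}\,\bar e_3$ on $\Lie_p'$ in the basis $\bar e_1,\bar e_2$ is
\begin{align*}
\mathrm{ad}\,\bar e_3:\ \bar e_1\mapsto c\,\bar e_2,\quad \bar e_2\mapsto -c'\bar e_1,
\end{align*}
with units $c,c'$ coming from $1$ and $\beta^{s}$. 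Rescaling $\bar e_1$ normalizes this matrix to $\medmatrix{0&-\gamma\\1&0}$ with $\gamma\in\{1,\beta\}$ modulo squares.

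The next step is to check that the isomorphism class of $\Lie_p$ is exactly the $\mathrm{PGL}_2(\FF_p)$-conjugacy class of $\mathrm{ad}\,\bar e_3|_{\Lie_p'}$. Any complement element is $\bar e_3$ times a scalar plus an element of the abelian ideal, which acts trivially on that ideal, so it only changes the operator by a scalar. An isomorphism restricts to a linear isomorphism of $\Lie_p'$, giving conjugation. The two classes are distinct because $-\det/\mathrm{tr}^2$ is not available here (the trace is zero). Instead I will use the projective invariant, namely whether $-\det$ is a square up to the scalar action $\lambda^2$. The characteristic polynomial $x^2+\gamma$ is split or irreducible over $\FF_p$ according to the class of $-\gamma$, and scaling by $\lambda$ multiplies $\gamma$ by $\lambda^2$, so this is an invariant. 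This yields at most two classes, and both occur.

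I expect the main obstacle to be the bookkeeping in placing $\beta^s$ when choosing which index has $v_i\geq1$. This is harmless because $\beta$ only affects the discriminant class. The distinctness argument is routine.
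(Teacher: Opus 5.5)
Your proposal is correct and follows the paper's route: you pass to the normal form of Proposition~\ref{prop:FAb Lie algebra dim3}, note that $\Lie_p'$ is spanned by the $\bar e_i$ with $v_i=0$, and read off the Heisenberg case and the two $\mathrm{PGL}_2(\FF_p)$-classes from the reduced brackets, in more detail than the paper's brief sketch. One small caveat: once you fix which index has $v_i\geq 1$, the non-square cannot be moved to an arbitrary position (e.g.\ $\mathrm{diag}(1,\beta,p)$ and $\mathrm{diag}(1,1,\beta p)$ reduce to the two different classes), but your computation keeps $c,c'\in\{1,\beta^{s}\}$ general and so does not rely on that remark.
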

\begin{proof}
    Using Proposition~\ref{prop:FAb Lie algebra dim3} one chooses a suitable basis and notices that $\Lie_p'$ is spanned by the reductions of the $p^{v_i}e_i$. The number of $v_i$ which are $0$ is the dimension of $\Lie_p'$, from which the claim follows by studying the relations in $\Lie$.
\end{proof}
{\begin{exmp}
    Let $\Lie=\FF_pe\oplus \FF_pf\oplus \FF_pg$, then the definitions
    \begin{align*}
        [e,f]=e&,\quad [e,g]=0,\quad [f,g]=0,\qquad \text{and respectively}\\
        [e,f]=0&,\quad [e,g]=e+f,\quad [f,g]=f
    \end{align*}
    define Lie algebra structures on $\Lie$ (see \cite{Jacobson1962}*{I \S4 (c) \& (d)}). By Corollary~\ref{cor:Mod p reduction of FAb LieAlgebras dim3} neither of these Lie algebras is the mod-$p$ reduction of a three-dimensional $\ZZ_p$-Lie algebra with finite abelianization.
\end{exmp}
\begin{quest}
    What are necessary conditions on an $\FF_p$-Lie algebra (in dimension $>3$) to be the mod-$p$ reduction of a FAb $\ZZ_p$-Lie algebra of the same dimension (in particular free as a $\ZZ_p$-module)?
\end{quest}}
\subsection{Pro-\texorpdfstring{$p$}{p} groups and linking diagrams with three vertices}
\label{ssec:FAb groups with three generators}
Let $G$ be a pro-$p$ group with $d(G)=3$ weakly presented by a linking diagram $\Lambda=(\Gamma,a,\ell)$. We write $V(\Gamma)=\{1,2,3\}$ and $\ell_{i,j}=\ell(i,j)$ to simplify the notation. Furthermore, we assume that $v_p(a_1)\leq v_p(a_2)\leq v_p(a_3)$. Let us collect all the relevant data in the following way:
\begin{align}
\label{eq:combinatorial data}
    A\coloneq\medmatrix{
        a_1&0&0\\ 0&a_2&0\\ 0&0&a_3 
    }\in \ZZ_p^{3\times 3}\qquad 
    L\coloneq\medmatrix{
        0&-\ell_{13}&\ell_{12}\\
        \ell_{23}&0&-\ell_{21}\\
        -\ell_{32}&\ell_{31}&0
    }\in \FF_p^{3\times 3}
\end{align}
We write $A_p$ for the mod-$p$ reduction of $A$.
\begin{rem}
    If $\dim \rmH^2(G,\FF_p)=3$, i.e. $G$ is presented by $\Lambda$ in the strong sense, then the matrix $L$ describes the cup product $\Lambda^2(\rmH^1(G,\FF_p))\to \rmH^2(G,\FF_p)$ with respect to the appropriate bases of $\Lambda^2(\rmH^1(G,\FF_p))$ and $\rmH^2(G,\FF_p)$. Thus one has $\rk L=\dim_{\FF_p}(\rmH^1(G,\FF_p)\smallsmile \rmH^1(G,\FF_p))$ and $\rk L$ is an invariant of the group and independent of the specific chosen presentation.

{\color{black}
    Furthermore, the valuations of the $a_i$ are also invariants of the group, as they can be read of the elementary divisors of the abelianization.} 
\end{rem}
Assume that $G$ admits a nontrivial uniform quotient $\pi:G\twoheadrightarrow U$, let $\Lie=\Lie(U)$ be its Lie algebra and $e_i=\log(\pi(x_i))$. Then we set 
\begin{align*}
    f_1\coloneq \dbb{e_2,e_3},\quad f_2\coloneq \dbb{e_3,e_1},\quad \text{and}\quad f_3\coloneq \dbb{e_1,e_2}.
\end{align*}
With notations fixed this way, (\ref{eq:Bockstein 1 relation}) reads as 
\begin{align}
\label{eq:relations in three dim Lie algebra}
    A\begin{pmatrix}
        e_1,e_2,e_3
    \end{pmatrix}^T\equiv
    L\begin{pmatrix}
        f_1,f_2,f_3
    \end{pmatrix}^T
    \pmod p.
\end{align}
To test whether $G$ satisfies all the necessary conditions to admit a nontrivial uniform quotient we {\color{black} use the results from Lemma~\ref{lem:rank and valuation relation} and Propositions~\ref{prop:rkL = 3} to \ref{prop:rkL = 1}. The method to check their conditions is summarized in Table~\ref{tab:rk3 Algorithm}}. It distinguishes $16$ cases by the rank of $L$ and the number of zero entries in the triple $(v_1,v_2,v_3)=(v_p(a_1),v_p(a_2),v_p(a_3))\in \NN_0^3$. By $*$ we indicate any positive integer. The lightning symbol ($\lightning$) indicates that a $G$ with the given combinatorial data cannot admit a nontrivial uniform quotient. If the method is not sufficient to exclude the existence of a nontrivial uniform quotient, we write \emph{inconclusive}.
\begin{center}
\begin{table}[h!]
    \begin{tabular}{c||K{2cm}|K{2cm}|K{2cm}|K{2cm}}
         $(v_1,v_2,v_3)$&  $(0,0,0)$ & $(0,0,*)$ & $(0,*,*)$ & $(*,*,*)$\\
         \hline \hline 
        \makecell{$\rk(L)=3$\\ see Prop.~\ref{prop:rkL = 3} }&\makecell{ check\\ Prop.~\ref{prop:rkL = 3} (\ref{it:rkL = 3 a})} & $\lightning$ & $\lightning$ & divide each $a_i$ by $p^{v_1}$ and start over\\
        \hline 
        \makecell{$\rk(L)=2$\\ see Prop.~\ref{prop:rkL = 2} }& $\lightning$ & \makecell{ check\\ Prop.~\ref{prop:rkL = 2} (\ref{it:rkL = 2 a})} &  \makecell{ check\\ Prop.~\ref{prop:rkL = 2} (\ref{it:rkL = 2 b})} & \makecell{ check\\ Prop.~\ref{prop:rkL = 2} (\ref{it:rkL = 2 c})}\\
        \hline 
        \makecell{$\rk(L)=1$\\ see Prop.~\ref{prop:rkL = 1} }& $\lightning$ & $\lightning$ & \makecell{check\\ Prop.~\ref{prop:rkL = 1} (\ref{it:rkL = 1 a})} & {inconclusive}\\
        \hline 
        $\rk(L)=0$ & $\lightning$ & $\lightning$ & $\lightning$ & inconclusive\\
    \end{tabular}
    \caption{Summary of the method to determine whether $G$ represented by $\Lambda$ with $A$ and $L$ given by \eqref{eq:combinatorial data} has the uFM property}
    \label{tab:rk3 Algorithm}
    \end{table}
\end{center}
Before stating Propositions~\ref{prop:rkL = 3}, \ref{prop:rkL = 2} and \ref{prop:rkL = 1}, we want to highlight a consequence of the existence of this method. Since it relies only on the combinatorial data, it is possible to artificially construct suitable linking diagrams with three vertices satisfying the conditions in each case. By an application of Theorem~\ref{thm:Linking diagram approximation by arithmetic} we can realize this linking diagram by infinitely many triples of primes and conclude the following theorem.
\begin{thm}
\label{thm:realization of uFM groups}
    Fix a number field $K$ and a prime $p$ unramified in $K/\QQ$ with $h_p(K)=1$. Let $a_1,a_2,a_3\in \ZZ\setminus \{0\}$, $k\in \NN$, and $0\leq r\leq 3$ with $r\geq 2$ if each $a_i$ is divisible by $p^2$. Then there exist infinitely many triples of tame places $S=\{v_1,v_2,v_3\}$ such that $G_{K,S}$ has the uFM property, $N(\mathfrak{p}_{v_i})-1\equiv pa_i\pmod {p^k}$ and $\rk L=r$.
\end{thm}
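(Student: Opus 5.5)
The proof has two parts. First we construct suitable combinatorial data, namely a linking diagram $\Lambda=(\Gamma,a,\ell)$ on three vertices. Then we realize it arithmetically via Theorem~\ref{thm:Linking diagram approximation by arithmetic}.

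Put $\Lambda$'s sequence equal to the given $a_1,a_2,a_3$, viewed in $\ZZ_p\setminus\{0\}$, and order the vertices so that $v_p(a_1)\le v_p(a_2)\le v_p(a_3)$. This fixes the column of Table~\ref{tab:rk3 Algorithm} we are in. The task is then to choose the linking numbers $\ell_{ij}\in\FF_p$, hence the matrix $L$ of \eqref{eq:combinatorial data}, with $\rk L=r$, so that the corresponding entry of the table is either $\lightning$ or a ``check'' condition that our choice satisfies. We go through the cases of the table:
\begin{itemize}
\item For $r=0$ and at least one $v_i=0$ the entry is $\lightning$, so any $L=0$ works.
\item For $r=1$ the columns $(0,0,0)$ and $(0,0,*)$ are $\lightning$. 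In the column $(0,*,*)$ we pick a rank-one $L$ meeting Prop.~\ref{prop:rkL = 1}(\ref{it:rkL = 1 a}).
\item For $r=2$ the column $(0,0,0)$ is $\lightning$; in the remaining columns we choose $L$ satisfying the relevant part of Prop.~\ref{prop:rkL = 2}.
\item For $r=3$ the columns $(0,0,*)$ and $(0,*,*)$ are $\lightning$. The column $(0,0,0)$ requires Prop.~\ref{prop:rkL = 3}(\ref{it:rkL = 3 a}). In the column $(*,*,*)$ we rescale by $p^{v_1}$, which lands in one of the other three columns with $\rk L=3$, and handle that column as above.
\end{itemize}
The hypothesis ``$r\ge2$ if every $a_i$ is divisible by $p^2$'' is there to avoid the inconclusive entries. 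Since the theorem speaks of $N(\fp)-1\equiv pa_i$, the diagram entry is $a_i$ and its valuation enters directly. The cells with $r\le1$ in the column $(*,*,*)$ are inconclusive, so the hypothesis must be what excludes them. I will match this carefully against the statement, possibly replacing $a_i$ by $a_i+p^{k}m_i$ with $m_i$ chosen to adjust valuations, which is allowed since only $a_i\bmod p^{k}$ is prescribed (after enlarging $k$ if needed).

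The main obstacle is to show that each ``check'' condition in Propositions~\ref{prop:rkL = 3}--\ref{prop:rkL = 1} can actually be satisfied by some $L$ of the prescribed rank with the given $a_i$. I expect these conditions to be open, nonvanishing-type conditions on finitely many polynomial expressions in the $\ell_{ij}$ and the residues or leading $p$-adic digits of the $a_i$. For instance, they may ask that certain minors of $A_p^{-1}L$ or of $L$ restricted to the relevant coordinates do not vanish, or that a suitable characteristic polynomial does not reflect the two reductions in Corollary~\ref{cor:Mod p reduction of FAb LieAlgebras dim3}. My first attempt is to take $L$ as sparse as possible, for example diagonal-like or with a single nonzero off-diagonal pattern of the right rank, and verify the condition by hand. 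Failing that, I would count solutions over $\FF_p$ to show that the bad locus is a proper subvariety, which has fewer than all points of $\{L:\rk L=r\}$ for $p$ odd. Since linking numbers may be arbitrary elements of $\FF_p$, including zero, the rank can be prescribed freely.

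Finally, apply Theorem~\ref{thm:Linking diagram approximation by arithmetic}, in its refined inductive form Lemma~\ref{lem:construction of extension with prescribed ramification}, with the norm conditions $N(\fp_{v_i})-1\equiv pa_i\pmod{p^k}$. This gives a positive-density, hence infinite, family of triples $S$ with $\Lambda_{K,S}\simeq\Lambda$ and $\B_S=\B_\emptyset$. By Koch's theorem $G_{K,S}$ is weakly presented by $\Lambda$. The uFM property follows from the table, because every proposition used there only requires a weak presentation. Equivalence of diagrams rescales the $a_i$ and $\ell$ by units, which preserves valuations and $\rk L$. The conditions we checked must therefore be invariant under this rescaling, or else we realize $\Lambda$ exactly using the freedom of choosing generators.
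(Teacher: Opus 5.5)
Your overall route is the paper's: fix the $a_i$, choose linking numbers of rank $r$ so that Table~\ref{tab:rk3 Algorithm} ends in a contradiction, and realize the diagram with the exact norm congruences via Lemma~\ref{lem:construction of extension with prescribed ramification} and Koch's theorem.

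The genuine gap is the cell $\rk L=2$ with every $a_i$ divisible by $p^2$, which the hypothesis explicitly allows. There, case (c) of Proposition~\ref{prop:rkL = 2} gives nothing for \emph{any} choice of $L$. The entries of $M$ are $(a_i/p)\mu_i \bmod p$, and these all vanish. Hence $\rk\widehat L=\rk L=2$, and the rank-$3$ hypothesis of case (c) never holds. Your expectation that each ``check'' condition is an open non-vanishing condition, to be met by a sparse $L$ or by counting points off a proper subvariety, fails exactly here: the bad locus is everything, and the table is inconclusive.

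The paper closes this cell with a separate argument, Example~\ref{exmp:rk2 and Lie algebra not abelian}:
\begin{itemize}
\item Take the path diagram $x_1\to x_2\to x_3$ with linking numbers $1$, where $x_3$ is the vertex of minimal valuation.
\item By Corollary~\ref{cor:Mod p reduction of FAb LieAlgebras dim3}, either $\mathfrak{b}^1_1=0$ and all valuations drop by one, or $\dbb{e_1,e_3}$ is a multiple of $e_2$ modulo $p$.
\item In the second case, Proposition~\ref{prop:Bockstein 2 relation} and induction over the higher Bockstein differentials lift the congruences to ever higher powers of $p$.
\item This continues until $p^{v_3}e_3\equiv 0$ contradicts $e_3$ being part of a basis.
\end{itemize}
Some such higher-Bockstein argument outside the table is indispensable, and your proposal has nothing in its place.

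Two further points.

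First, Propositions~\ref{prop:rkL = 3}--\ref{prop:rkL = 1} give \emph{necessary} conditions for a uniform quotient. In the ``check'' cells you must therefore choose $L$ \emph{violating} them (e.g.\ $A_p^{-1}L$ not symmetric, $L_{3-}\neq 0$, $L_{21}\neq 0$), not ``meeting'' them. In case (c) you need $\rk\widehat L=3$ \emph{and} a contradiction in the cell reached after descending.

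Second, your treatment of the column $(*,*,*)$ with $r\le 1$ does not work.
\begin{itemize}
\item The stated hypothesis does not exclude these inconclusive cells when all $a_i$ are divisible by $p$ but not all by $p^2$.
\item Altering $a_i$ modulo $p^k$ cannot leave that column. For $k\ge 2$, the congruence $N(\fp_{v_i})-1\equiv pa_i\pmod{p^k}$ with $p\mid a_i$ forces $p\mid (N(\fp_{v_i})-1)/p$. Enlarging $k$ only adds constraints.
\item The table marks these cells inconclusive, and at least for $r=0$ nothing depending only on the diagram can settle them. For instance, $\SL_2^1(\ZZ_p)$ is weakly presented by $L=0$ with all $a_i\in p\ZZ_p^\times$, because $P_3^*(\SL_2^1(\ZZ_p))=\SL_2^1(\ZZ_p)^{p^2}$.
\end{itemize}
The paper's proof does not treat these cells either: it names $\rk L=2$ with $p^2\mid a_i$ as the only problematic case. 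What the method actually supports is the hypothesis ``$r\ge 2$ if each $a_i$ is divisible by $p$'' (or $k=1$). You should flag this mismatch rather than assert that the stated hypothesis removes those cells.
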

\begin{proof}
    The only case where the above method (more specifically Proposition~\ref{prop:rkL = 2}) does not give the answer is when $\rk(L)=2$ and each $a_i$ is divisible by $p^2$. In this case we refer to Example~\ref{exmp:rk2 and Lie algebra not abelian}, which gives a family having the uFM property by choosing $x_3$ in the example to correspond to the $a_i$ with minimal valuation.
\end{proof}
The next lemma, we justify, why Table~\ref{tab:rk3 Algorithm} has a triangular form. 
\begin{lem}
    \label{lem:rank and valuation relation}
    If $\sum_{j=1}^3c_jL_{j-}=0$ for some $c_j\in \FF_p$, then $c_j=0$ for all $j$ with $v_p(a_j)=0$. In particular, the rows $L_{j-}$ for all $j$ with $v_p(a_j)=0$ are linearly independent and the rank of $L$ is at least the number of $j$s such that $v_p(a_j)=0$.
\end{lem}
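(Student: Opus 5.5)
The plan is to read the lemma under the standing assumption of this subsection: $G$ is weakly presented by $\Lambda$ and admits a nontrivial uniform quotient $\pi:G\twoheadrightarrow U$, with $\Lie=\Lie(U)$ and $e_i=\log(\pi(x_i))$. A purely combinatorial statement about $L$ cannot hold, since the rows of $L$ can be arbitrary. The lemma should therefore be understood as saying that if the conclusion fails, no such $U$ exists. This is also what makes the lower-triangular part of Table~\ref{tab:rk3 Algorithm} consist of $\lightning$ entries. The argument has three steps: show that $U$ is FAb of dimension $3$, combine the rows of congruence \eqref{eq:relations in three dim Lie algebra}, and compare coefficients in a basis.

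\textbf{Step 1: $U$ is FAb and $d(U)=3$.} Since all $a_j\neq 0$, the abelianization $G^{\rm ab}$ is finite: modulo commutators the relations $\rho_v$ become $x_v^{pa_v}$. Hence $U^{\rm ab}$, a quotient of $G^{\rm ab}$, is finite, so $U$ is FAb. By Proposition~\ref{prop:FAB uniform groups}, a nontrivial FAb uniform group has $d(U)\geq 3$. On the other hand, $U$ is generated by the three elements $\pi(x_i)$, so $d(U)\le 3$. Therefore $d(U)=3$, and the reductions of $e_1,e_2,e_3$ form an $\FF_p$-basis of $\fLie_p=\Lie\otimes\FF_p$, which is $U/\Phi(U)$ via $\log$. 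I expect this step to be the only real obstacle: it is exactly where the existence of a nontrivial uniform quotient is used, and where one needs the finiteness of $G^{\rm ab}$ together with the dimension bound from Proposition~\ref{prop:FAB uniform groups}.

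\textbf{Step 2: combine the rows.} Take $c=(c_1,c_2,c_3)$ with $c^TL=\sum_j c_jL_{j-}=0$. Multiply congruence \eqref{eq:relations in three dim Lie algebra} on the left by $c^T$, using arbitrary lifts of the $c_j$ to $\ZZ_p$. This gives
\begin{align*}
\sum_{j=1}^3 c_ja_je_j\equiv (c^TL)\,(f_1,f_2,f_3)^T\equiv 0 \pmod p .
\end{align*}

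\textbf{Step 3: compare coefficients.} Since the $e_j$ reduce to a basis of $\fLie_p$ by Step 1, the congruence in Step 2 forces $c_ja_j\equiv 0\pmod p$ for every $j$. Whenever $v_p(a_j)=0$, the element $a_j$ is a unit, so $c_j=0$. Consequently the rows $L_{j-}$ with $v_p(a_j)=0$ are linearly independent. Indeed, any vanishing combination of them extends by zero coefficients to a vanishing combination of all rows, and Step 3 then kills every coefficient. The rank bound follows immediately: $\rk L$ is at least the number of indices $j$ with $v_p(a_j)=0$.
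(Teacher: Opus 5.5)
Your proposal is correct and is the paper's argument: you multiply the congruence \eqref{eq:relations in three dim Lie algebra} by $c$ to get $\sum_j c_ja_je_j\equiv 0\pmod p$, then use the linear independence of the $e_j$ modulo $p$ to force $c_ja_j\equiv 0$. Your Step~1 (finite $G^{\rm ab}$, hence $U$ is FAb and $d(U)=3$ by Proposition~\ref{prop:FAB uniform groups}) makes explicit two things the paper's proof uses without comment: the standing hypothesis that a nontrivial uniform quotient exists, and the resulting linear independence of the $e_j$.
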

\begin{proof}
    Let $\sum_{j=1}^3c_jL_{j-}=0$. Then it follows from \eqref{eq:relations in three dim Lie algebra} that
    \begin{align*}
        \sum_{j=1}^3(c_ja_j)e_i\equiv  \bigg(\sum_{j=1}^3c_jL_{j-}\bigg)(f_1,f_2,f_3)^T\equiv 0\pmod p.
    \end{align*}
    From the linear independence of the $e_i$ it follows that $c_ja_j\equiv 0\pmod p$ for all $j$. Hence there can be no $j$ with $v_p(a_j)=0$ and $0\neq c_j\in \FF_p$.

    The last two statements follow from the first one.
\end{proof}
The following propositions yield the necessary conditions indicated in Table~\ref{tab:rk3 Algorithm}. Since the proofs are very technical and not very enlightening we decided to move them to Appendix~\ref{sec:Appendix with proofs}.
\begin{prop}
    \label{prop:rkL = 3}
    If $\rk L=3$ and there exists a nontrivial uniform quotient of $G$, then either
    \begin{enumerate}[(a)]
        \item \label{it:rkL = 3 a} each $v_i$ is $0$ and $A_p^{-1}L$ is symmetric or
        \item \label{it:rkL = 3 b} each $v_i> 0$ and there is a three-dimensional FAb $\ZZ_p$-Lie algebra $\widetilde{\Lie}$ such that $\widetilde{\Lie}_p$ satisfies the relations (\ref{eq:relations in three dim Lie algebra}) with $a_i$ replaced by $a_i/p^{v_1}$.
    \end{enumerate}  
\end{prop}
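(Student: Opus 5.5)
The plan is to first show that any nontrivial uniform quotient is forced to be a three-dimensional FAb object whose Lie algebra has basis $e_1,e_2,e_3$, and then to split according to the valuations $v_i=v_p(a_i)$.

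\emph{Step 1: reduction to the FAb case.} Let $\pi:G\twoheadrightarrow U$ be a nontrivial uniform quotient, with $\Lie=\Lie(U)$ and $e_i=\log(\pi(x_i))$. Since $G$ is weakly presented by $\Lambda$ with all $a_i\neq 0$, the abelianization $G^{\rm ab}$ is finite, because the relations $\rho_v$ give $x_v^{pa_v}\in [G,G]$ up to $P_3^*$ corrections. Hence $U^{\rm ab}$ is finite. By Proposition~\ref{prop:FAB uniform groups}, $U$ and $\Lie$ are FAb and $d(U)\geq 3$. Since $d(U)\leq d(G)=3$, the images of $e_1,e_2,e_3$ form a basis of $\Lie_p=\fLie_p$, hence a $\ZZ_p$-basis of $\Lie$. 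Rescaling the bracket by $p$ does not change perfectness over $\QQ_p$, so $\fLie$ is also a three-dimensional FAb Lie algebra. In the basis $e_i$, write $f\equiv Me \pmod p$, where $f=(f_1,f_2,f_3)^T$ and $e=(e_1,e_2,e_3)^T$. By the Jacobi identity, as in the proof of Proposition~\ref{prop:FAb Lie algebra dim3}, the matrix $M$ is symmetric. Since $\rk L=3$, the congruence \eqref{eq:relations in three dim Lie algebra} can be inverted to give $f\equiv L^{-1}A_p\,e\pmod p$.

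\emph{Step 2: all $v_i=0$, and the mixed case.} If all $v_i=0$, then $M\equiv L^{-1}A_p$ is symmetric. Its inverse $A_p^{-1}L$ is therefore symmetric as well, which is \eqref{it:rkL = 3 a}.

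Now suppose some but not all $v_i$ vanish. Then $\rk A_p\in\{1,2\}$, so the reductions of the $f_i$ span a subspace of dimension $\rk(L^{-1}A_p)<3$ in $\fLie_p$. Hence $\dbb{\fLie,\fLie}+p\fLie\neq\fLie$, i.e.\ $\fLie_p$ is not perfect. I will derive a contradiction from the FAb structure of $\fLie$ through Proposition~\ref{prop:FAb Lie algebra dim3}. Choose the normal-form basis there, with $[\cdot,\cdot]$ replaced by $\dbb{\cdot,\cdot}$ and exponents $w_1,w_2,w_3$. In the original basis $e_i$, the matrix $M$ is congruent to $\mathrm{diag}(p^{w_1},p^{w_2},\beta^sp^{w_3})$ up to a unit and a $\GL_3(\ZZ_p)$-congruence. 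Consequently $\rk(M \bmod p)=\#\{i:w_i=0\}$.

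On the other hand, $M\equiv L^{-1}A_p$ has rank exactly $\#\{i:v_i=0\}$. The symmetry of $L^{-1}A_p$ should then be exploited, row by row, against the columns of $A_p$ that vanish. If $v_3>0$, say, the third column of $L^{-1}A_p$ is zero, so by symmetry its third row is zero too. This forces $f_3\equiv 0$, and then the relations $a_je_j\equiv(Lf)_j$ for $j$ with $v_j=0$ must be inconsistent with Lemma~\ref{lem:rank and valuation relation}. I expect this bookkeeping to be the main obstacle. The first thing I would try is to combine the symmetry of $M$ with the invertibility of $L$ and with the zero pattern of $A_p$ in the cases $(0,0,*)$ and $(0,*,*)$. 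If that does not suffice, I would use the second-page relation of Proposition~\ref{prop:Bockstein 2 relation} on the $e_j$ with $v_j>0$.

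\emph{Step 3: all $v_i>0$.} This case uses induction with Proposition~\ref{prop:First r Bocksteins vanish}. I claim $\dbb{\fLie,\fLie}\subseteq p^{r}\fLie$ for all $r\leq v_1$. For $r=1$: $A_p=0$, so $Lf\equiv 0$, and invertibility of $L$ gives $f\equiv 0\pmod p$. For the step from $r-1$ to $r$ with $r\leq v_1$: all $a_j\in p^{r-1}\ZZ_p$, so Proposition~\ref{prop:First r Bocksteins vanish} gives $Ae\equiv \widehat{L}f\pmod{p^r}$, where $\widehat{L}$ is a lift of $L$. The left side vanishes modulo $p^r$ because $r\leq v_1$, and $\widehat{L}$ is invertible over $\ZZ_p$, so $f\equiv 0\pmod{p^r}$. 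Applying the proposition once more with $r=v_1+1$ yields $Ae\equiv\widehat{L}f\pmod{p^{v_1+1}}$. Set $\widetilde{\Lie}$ equal to $\Lie$ with the bracket $[\cdot,\cdot]/p^{v_1+1}=\dbb{\cdot,\cdot}/p^{v_1}$. This is a three-dimensional FAb $\ZZ_p$-Lie algebra by Step~1. Dividing the congruence by $p^{v_1}$ gives exactly \eqref{eq:relations in three dim Lie algebra} for $\widetilde{\Lie}_p$ with $a_i$ replaced by $a_i/p^{v_1}$, which is \eqref{it:rkL = 3 b}.
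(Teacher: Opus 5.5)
\textbf{There is a genuine gap: the mixed case (some but not all $v_i=0$) is not proved.} Your Steps~1 and~3 and the case where all $v_i=0$ are correct and follow the paper's proof; the descent via Proposition~\ref{prop:First r Bocksteins vanish} is the same induction.

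In the mixed case you never derive a contradiction, and the one you aim for cannot come from Lemma~\ref{lem:rank and valuation relation}. When $\rk L=3$ that lemma's conclusion, that the rows $L_{j-}$ with $v_j=0$ are independent, holds automatically, so it gives nothing. Likewise, the normal-form count $\rk \bar M=\#\{i:w_i=0\}$ is compatible with every valuation pattern and excludes nothing. The missing ingredient is the shape of the relations: $L$ has zero diagonal. The $j$-th relation only involves $\dbb{e_j,\cdot}$, hence only the $f_k$ with $k\neq j$:
$a_1e_1\equiv-\ell_{13}f_2+\ell_{12}f_3$, $a_2e_2\equiv\ell_{23}f_1-\ell_{21}f_3$, and $a_3e_3\equiv-\ell_{32}f_1+\ell_{31}f_2 \pmod p$.

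With this, your own symmetry observation closes the case quickly.
\begin{itemize}
\item[$(0,*,*)$] Columns $2,3$ of $L^{-1}A_p$ vanish, so by symmetry rows $2,3$ vanish, i.e.\ $f_2\equiv f_3\equiv 0$. The first relation then gives $a_1e_1\equiv 0$. This is impossible, since $a_1$ is a unit and $\bar e_1\neq 0$.
\item[$(0,0,*)$] Column~3, hence row~3, vanishes, so $f_3\equiv0$. Then $a_1e_1\equiv-\ell_{13}f_2$ and $a_2e_2\equiv\ell_{23}f_1$ force $\bar f_1,\bar f_2$ to be linearly independent. The third relation $0\equiv-\ell_{32}f_1+\ell_{31}f_2$ then forces $L_{3-}=0$, contradicting $\rk L=3$.
\end{itemize}

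You must also make explicit where the symmetry of $\bar M$ comes from. The Jacobi identity only gives $(\alpha_{23}-\alpha_{32})f_1+(\alpha_{31}-\alpha_{13})f_2+(\alpha_{12}-\alpha_{21})f_3=0$. This yields symmetry of the integral structure matrix of $\fLie$ only because $f_1,f_2,f_3$ are $\ZZ_p$-independent, which holds since $\fLie$ is FAb. Working modulo $p$ alone does not give it in the mixed case, because there the $\bar f_i$ are dependent. So $\bar M$ must be defined as the reduction of the integral matrix.

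The paper handles this case differently, via the structure of three-dimensional Lie algebras. In case $(0,*,*)$ the derived algebra of $\fLie_p$ is $\langle\bar e_1\rangle$, hence central by Corollary~\ref{cor:Mod p reduction of FAb LieAlgebras dim3}, contradicting $a_1e_1\equiv\dbb{e_1,\ell_{12}e_2+\ell_{13}e_3}$. In case $(0,0,*)$ the two-dimensional derived algebra $\langle\bar e_1,\bar e_2\rangle$ is abelian by Jacobson's classification, giving $f_3\equiv0$ and then $L_{3-}=0$ as above. Your symmetric-matrix route is a valid substitute for that structural input, but as written the proposal stops before reaching the contradiction.
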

\begin{prop}
    \label{prop:rkL = 2}
    If $\rk L=2$ and there exists a nontrivial uniform quotient of $G$, then either
    \begin{enumerate}[(a)]
        \item \label{it:rkL = 2 a} $v_1=v_2=0$, $v_3> 0$ and the rows $L_{1-}$, $L_{2-}$ are linearly independent, $L_{3-}=0$, and both $\ell_{13}$ and $\ell_{23}$ non-zero.
        \item \label{it:rkL = 2 b} $v_1=0$, $v_2,v_3> 0$ and $L_{1-}$ is non-zero, $L_{2-}$ and $L_{3-}$ are linearly dependent, and $L_{21}=L_{31}=0$.
        \item \label{it:rkL = 2 c} $v_1,v_2,v_3> 0$. In this case let $0\neq \mu=(\mu_1,\mu_2,\mu_3)\in \FF_p^3$ such that $\mu L=0$ and set
        \begin{align*}
            M\coloneq\medmatrix{
                0&(a_3/p) \mu_3&-(a_2/p) \mu_2\\
                -(a_3/p) \mu_3&0&(a_1/p) \mu_1\\
                (a_2/p) \mu_2&-(a_1/p) \mu_1&0
            }
            \in \FF_p^{3\times 3}\quad \text{and}\quad \widehat{L}\coloneq \medmatrix{
                L\\
                M
            }
        \end{align*}
        If the matrix $\widehat{L}$ has rank $3$, then there exists a three-dimensional FAb $\ZZ_p$-Lie algebra $\widetilde{\Lie}$ such that $\widetilde{\Lie}_p$ satisfies the relations (\ref{eq:relations in three dim Lie algebra}) with $a_i$ replaced by $a_i/p$.
    \end{enumerate}  
\end{prop}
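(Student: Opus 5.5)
The plan is to turn a nontrivial uniform quotient $\pi:G\twoheadrightarrow U$ into a statement about $3\times 3$ matrices over $\FF_p$, using the classification in Proposition~\ref{prop:FAb Lie algebra dim3}. I would first show that $U$ must be a three-dimensional FAb group.

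\textbf{Step 1: $U$ is three-dimensional and FAb.} Since $G$ is weakly presented by $\Lambda$, there are relations $\rho_v\equiv x_v^{pa_v}\prod_w[x_v,x_w]^{\ell(v,w)}$ modulo $P_3^*(F)$. Hence $G^{\rm ab}$ is a quotient of $\bigoplus_i\ZZ_p/pa_i\ZZ_p$, which is finite. So $U^{\rm ab}$ is finite, and Proposition~\ref{prop:FAB uniform groups} gives $3\le d(U)\le d(G)=3$. Therefore $e_1,e_2,e_3$ is a $\ZZ_p$-basis of $\Lie$. The algebra $\fLie$ is $\Lie$ with bracket divided by $p$, so it is also a three-dimensional FAb $\ZZ_p$-Lie algebra.

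\textbf{Step 2: reduce to a matrix identity.} Write $(f_1,f_2,f_3)^T=S\,(e_1,e_2,e_3)^T$ with $S\in\ZZ_p^{3\times3}$. As in the proof of Proposition~\ref{prop:FAb Lie algebra dim3}, the Jacobi identity makes $S$ symmetric, and $\det S\neq 0$. Since the $e_i$ form a basis, \eqref{eq:relations in three dim Lie algebra} becomes
\[
A_p=L\,S_p \quad\text{in } \FF_p^{3\times3}.
\]
The rank of $S_p$ equals $\dim_{\FF_p}\fLie_p'$. By Lemma~\ref{lem:rank and valuation relation}, at most two $v_i$ vanish, so the case $(0,0,0)$ cannot occur.

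\textbf{Step 3: the cases (a) and (b).}
\begin{enumerate}[(a)]
\item Suppose $v_1=v_2=0<v_3$. Any left dependency $c$ of the rows of $L$ has $c_1=c_2=0$ by Lemma~\ref{lem:rank and valuation relation}. Since $\rk L=2$, a nonzero dependency exists, so $L_{3-}=0$ and $L_{1-},L_{2-}$ are independent.

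For the nonvanishing of $\ell_{13},\ell_{23}$, compare entries of $A_p=LS_p$ and use that $S_p$ is symmetric. In the perfect case ($S_p$ invertible) one gets $L=A_pS_p^{-1}$. Then $\ell_{13}=-L_{12}$ and $\ell_{23}=L_{21}$ are unit multiples of the same entry $(S_p^{-1})_{12}$. The zero diagonal of $L$ forces $(S_p^{-1})_{11}=(S_p^{-1})_{22}=0$. So if $(S_p^{-1})_{12}=0$, the first two rows of $S_p^{-1}$ are proportional, contradicting invertibility.

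If instead $\rk S_p=2$, I would use Corollary~\ref{cor:Mod p reduction of FAb LieAlgebras dim3}(\ref{it:two dimensional derived implies iso class}). It fixes the normal form of $S_p$ up to congruence, and I would run the same entry comparison there.
\item Suppose $v_1=0<v_2,v_3$. The same dependency argument gives $L_{1-}\ne0$ and $L_{2-},L_{3-}$ dependent. Comparing the second and third rows of $A_p=LS_p$ (which vanish) with the first (which does not) should yield $L_{21}=L_{31}=0$. The input is again the symmetry of $S_p$ and the restricted normal forms from Proposition~\ref{prop:FAb Lie algebra dim3} together with Corollary~\ref{cor:Mod p reduction of FAb LieAlgebras dim3}.
\end{enumerate}

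\textbf{Step 4: the case (c), which I expect to be the main obstacle.} Suppose all $v_i>0$. Then $A_p=0$, so $LS_p=0$, and $\rk L=2$ forces $\rk S_p\le1$. Corollary~\ref{cor:Mod p reduction of FAb LieAlgebras dim3}(\ref{it:one dimensional derived implies central}) and Proposition~\ref{prop:FAb Lie algebra dim3} then show that $\fLie_p'$ is at most one-dimensional and central, which is too weak on its own. So I would pass to Proposition~\ref{prop:Bockstein 2 relation}: since every $a_j\in p\ZZ_p$, the relations hold modulo $(p^2,p\dbb{\fLie,\fLie})$.

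The row combination given by $\mu$ kills $L$. Hence $\sum_j\mu_j a_j e_j$ lies in $(p^2,p\dbb{\fLie,\fLie})$, which gives a relation for $\sum_j\mu_j(a_j/p)e_j$ modulo $p\fLie+\dbb{\fLie,\fLie}$. Bracketing this relation with each $e_k$ produces exactly the rows of $M$ acting on $(f_1,f_2,f_3)$.

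Next I would rescale, replacing $\fLie$ by a suitable subalgebra or lattice $\widetilde{\Lie}$ whose bracket is $\dbb{\,\cdot\,,\cdot\,}$ divided by the appropriate power of $p$. The aim is for the reduction $\widetilde{\Lie}_p$ to satisfy \eqref{eq:relations in three dim Lie algebra} with $a_i$ replaced by $a_i/p$. The condition $\rk\widehat L=3$ is what should make this rescaling well defined, because it forces the combined relations to determine all three $f_i$. The delicate part is to check that the higher-order error terms in $p\dbb{\fLie,\fLie}$ really vanish after rescaling, and that $\widetilde{\Lie}$ is still FAb and three-dimensional.
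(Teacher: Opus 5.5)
Your reformulation $A_p=LS_p$ with $S_p$ symmetric is a good device, and it settles the subcases of (a) and (b) that you only gestured at. For symmetric $S_p$, the left and right kernels both equal the orthogonal complement of the row space. The row space contains $\varepsilon_i$ whenever $v_i=0$, since $a_i\varepsilon_i=L_{i-}S_p$.
\begin{itemize}
\item In (a) with $\rk S_p=2$, this makes the third row and column of $S_p$ vanish. Then $a_1=-\ell_{13}s_{12}$ and $a_2=\ell_{23}s_{12}$, so $\ell_{13},\ell_{23}\neq 0$.
\item In (b), $\rk S_p=1$ would give $S_p=s\,\varepsilon_1^T\varepsilon_1$ and hence $L_{1-}S_p=0$, because $L_{11}=0$; this contradicts $a_1\neq 0$.
\item In (b) with $\rk S_p=2$, the kernel is spanned by whichever of $L_{2-},L_{3-}$ is nonzero, and it is orthogonal to $\varepsilon_1$. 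This gives $L_{21}=L_{31}=0$.
\end{itemize}
Written out, these arguments are fine and avoid the appeal to Jacobson's structure results that the paper uses.

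The genuine gap is at the end of (c). After bracketing with the $e_k$ and using that $\fLie_p'$ is central, you have $M(f_1,f_2,f_3)^T\equiv 0$. Since $A_p=0$, you also have $L(f_1,f_2,f_3)^T\equiv 0\pmod p$. So $\rk\widehat L=3$ does not merely ``determine'' the $f_i$: it forces $f_1\equiv f_2\equiv f_3\equiv 0\pmod p$, that is, $\dbb{\fLie,\fLie}\subseteq p\fLie$.

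You never draw this conclusion, and it is exactly what removes the ``delicate part'' you left open. Now $p\dbb{\fLie,\fLie}\subseteq p^2\fLie$, so the congruences of Proposition~\ref{prop:Bockstein 2 relation} hold modulo $p^2$ (equivalently, apply Proposition~\ref{prop:First r Bocksteins vanish} with $r=2$). No subalgebra and no other power of $p$ is involved. Take $\widetilde{\Lie}$ to be the lattice $\fLie$ itself with the bracket $\frac1p\dbb{\cdot,\cdot}$, which is integral. It is three-dimensional and FAb, because $x\mapsto x/p$ identifies $\widetilde{\Lie}\otimes\QQ_p$ with $\fLie\otimes\QQ_p$, as in your Step~1. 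Write $f_i=p\tilde f_i$, with $\tilde f_i$ the corresponding brackets in $\widetilde{\Lie}$. Dividing $a_ie_i\equiv L_{i-}(f_1,f_2,f_3)^T\pmod{p^2}$ by $p$ then gives (\ref{eq:relations in three dim Lie algebra}) for $\widetilde{\Lie}$ with $a_i$ replaced by $a_i/p$.

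This is how the paper finishes: it phrases the argument as a contradiction to $\dim\fLie_p'=1$, followed by the descent from Proposition~\ref{prop:rkL = 3}. As written, your Step~4 stops before this point and does not prove (c).
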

\begin{prop}
\label{prop:rkL = 1}
    If $\rk L=1$ and there exists a uniform quotient of $G$, then either
    \begin{enumerate}[(a)]
        \item \label{it:rkL = 1 a} $v_1=0$, $v_2,v_3>0$ and the rows $L_{2-}$, $L_{3-}$ are $0$ or 
        \item \label{it:rkL = 1 b} each $v_i>0$.
    \end{enumerate}  
\end{prop}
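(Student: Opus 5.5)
The plan is to obtain the statement directly from Lemma~\ref{lem:rank and valuation relation}, applied to a nontrivial uniform quotient $\pi\colon G\twoheadrightarrow U$, after checking that this lemma really applies. The lemma needs the reductions of $e_1,e_2,e_3$ to be linearly independent in $\fLie_p$, i.e.\ that $d(U)=3$.

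\emph{Step 1: $d(U)=3$.} Since $G$ is weakly presented by $\Lambda$, the relations $\rho_v\equiv x_v^{pa_v}\prod_w[x_v,x_w]^{\ell(v,w)}$ with all $a_v\neq 0$ force $G^{\rm ab}$ to be finite. Hence $U^{\rm ab}$ is finite, so $U$ is FAb. By Proposition~\ref{prop:FAB uniform groups} a nontrivial FAb uniform group has $d(U)\geq 3$. On the other hand $d(U)\leq d(G)=3$. Therefore $d(U)=3$, the images of $x_1,x_2,x_3$ form a minimal generating set of $U$, and the $e_i=\log(\pi(x_i))$ reduce to a basis of $\Lie\otimes\FF_p$. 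So the congruence \eqref{eq:relations in three dim Lie algebra} holds with linearly independent $e_i$ modulo $p$, and Lemma~\ref{lem:rank and valuation relation} is available.

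\emph{Step 2: case split on the valuations.} By the last assertion of Lemma~\ref{lem:rank and valuation relation}, the number of indices $j$ with $v_p(a_j)=0$ is at most $\rk L=1$.
\begin{itemize}
\item If there is no such index, every $v_i>0$, which is alternative (b).
\item Otherwise exactly one $v_j$ vanishes. With the ordering convention $v_p(a_1)\leq v_p(a_2)\leq v_p(a_3)$, this means $v_1=0$ and $v_2,v_3>0$.
\end{itemize}

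\emph{Step 3: the vanishing of $L_{2-}$ and $L_{3-}$ in the second case.} Lemma~\ref{lem:rank and valuation relation} shows that the row $L_{1-}$ is linearly independent, i.e.\ nonzero. Since $\rk L=1$, there are scalars $c_2,c_3\in\FF_p$ with $L_{2-}=c_2L_{1-}$ and $L_{3-}=c_3L_{1-}$. Consider the vanishing linear combination
\[
c_2L_{1-}-L_{2-}=0 .
\]
The lemma forces its coefficient at the index $1$ (where $v_1=0$) to vanish, so $c_2=0$ and hence $L_{2-}=0$. The same argument gives $c_3=0$ and $L_{3-}=0$. This is alternative (a).

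\emph{Expected difficulty.} The combinatorial part is immediate. The only point needing care is Step 1: justifying the linear independence of the $e_i$ modulo $p$, which I get from the FAb property through Proposition~\ref{prop:FAB uniform groups}. I also read the hypothesis ``there exists a uniform quotient'' as ``a nontrivial uniform quotient'', since the trivial quotient always exists and would make the statement false.
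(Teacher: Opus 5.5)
Your proposal is correct and follows the paper's proof, which simply says the statement is a direct consequence of Lemma~\ref{lem:rank and valuation relation}. Your Step~1 makes explicit what the paper leaves implicit in its setup before that lemma: $G^{\rm ab}$ is finite, so $d(U)=3$ and the $e_i$ are linearly independent modulo $p$. You are also right that ``uniform quotient'' in the statement must be read as ``nontrivial uniform quotient''.
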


\subsection{Examples}
We now present some examples to showcase the method and its limits. Example~\ref{exmp:SL21(Zp) combinatorial data} shows that some cases of our classification correspond to combinatorial data, which is also shared by uniform groups. Hence a full classification of the uFM property based only on the linking diagram is impossible. Example~\ref{exmp:rk2 and decend} gives a concrete application of case \eqref{it:app rkL = 2 c} of Proposition~\ref{prop:rkL = 2}, where one descends by a power of $p$ before reaching the contradiction. This case of Proposition~\ref{prop:rkL = 2} is indecisive if all $v_i\geq 2$.. Example~\ref{exmp:rk2 and decend} shows that there are examples, where such an assumption is not necessary and it is possible to deduce the uFM property even if $v_p(a_i)\geq 2$ for all $i$.
\begin{exmp}
\label{exmp:SL21(Zp) combinatorial data}
    Consider the group $\SL_2^1(\ZZ_p)$ together with its associated powerful Lie algebra $\mathfrak{sl}^1_2(\ZZ_p)=\{M\in p\cdot \mathfrak{gl}_2(\ZZ_p):\tr(M)=0\}$. We denote
    \begin{align*}
        x\coloneq\exp\left(\begin{smallmatrix}
            0&p\\
            0&0
        \end{smallmatrix}\right),
        \quad 
        y\coloneq\exp\left(\begin{smallmatrix}
            0&0\\
            p&0
        \end{smallmatrix}\right)&,
        \quad 
        z\coloneq\exp\left(\begin{smallmatrix}
            p&p\\
            -p&-p
        \end{smallmatrix}\right),
    \end{align*}
    These are minimal generators of $\SL_2^1(\ZZ_p)$, as can be seen by their reduction modulo $p^2$. Furthermore, one has 
    \begin{align*}
        x^{-2p}[x,z][x,y],\quad y^{2p}[y,z][y,x]^{-1},\quad z^{2p}[z,x][z,y]\in P_3^*(G)
    \end{align*}
    Thus the matrices $A$ and $L$ from~\eqref{eq:combinatorial data} are 
    \begin{align*}
        A=\medmatrix{
            -2&0&0\\
            0&2&0\\
            0&0&2
        }
        \quad \text{and}\quad L=\medmatrix{
            0&-1&1\\
            1&0&1\\
            -1&1&0
        }
    \end{align*}
    It follows that $A_p^{-1}\cdot L$ is symmetric (independently of $p$) and thus Proposition~\ref{prop:rkL = 3} \eqref{it:rkL = 3 a} satisfied.

    For $p=3$, the linking diagram associated with the primes $S=\{7,31,229\}$ with primitive roots $3$, $13$ and $98$ is equivalent to the linking diagram for $\SL_2^1(\ZZ_p)$. This shows, that the linking diagram alone cannot be sufficient to determine the uFM property in all cases.
\end{exmp}
\begin{exmp}\label{exmp:rk2 and decend}
    Let $p=3$ and $S=\{19,73,127\}$, then we have $a_1=6$, $a_2=24$, $a_3=42$ and with respect to the primitive roots $10$, $59$ and $110$ one computes
    \begin{align*}
        L=\medmatrix{
            0&0&1\\
            0&0&-1\\
            1&-1&0
        }
    \end{align*}
    which has rank $2$. Since each $a_i$ is divisible by $3$, we check the conditions of Proposition~\ref{prop:rkL = 2} (\ref{it:rkL = 2 c}). One can choose $m=(1,-1,0)$. Then the matrix $M$ is
    \begin{align*}
        M=\medmatrix{
            0&14\cdot 0&-8\cdot (-1)\\
            -14\cdot 0&0&2\cdot 1\\
            8\cdot (-1)&-2\cdot 1& 0
        }
        =\medmatrix{
            0&0&-1\\
            0&0&-1\\
            1&1&0
        }
    \end{align*}
    The block matrix $\widehat{L}$ now has rank $3$ and therefore according to Proposition~\ref{prop:rkL = 2} \ref{it:app rkL = 2 c} we can replace $a_1$ by $2$, $a_2$ by $8$ and $a_3$ by $14$ and see that we are in the case $(v_1,v_2,v_3)=(0,0,0)$, which is impossible. Thus $G_{K,S}$ does not have any uniform quotients. 
\end{exmp}
\begin{exmp}
\label{exmp:rk2 and Lie algebra not abelian}
    Let us consider a pro-$p$ group $G$ that is weakly presented by the following linking diagram with $v_3=\min\{v_1,v_2,v_3\}$
    \begin{center}
        \begin{tikzpicture}
            \ldVertex{x1}{0,0}{$x_1$}{$p^{v_1}$};
            \ldVertex{x2}{2,0}{$x_2$}{$p^{v_2}$};
            \ldVertex{x3}{4,0}{$x_3$}{$p^{v_3}$};
            \ldEdge{x1}{x2}{$1$}{};
            \ldEdge{x2}{x3}{$1$}{};
        \end{tikzpicture}
    \end{center}
    We also denote the corresponding generators of $G$ by $x_1,x_2,x_3$.
    
    Assume $G$ has a non-trivial uniform quotient $U$ with Lie algebra $\Lie$. Then $U$ has necessarily rank $3$. Let $e_i$ be as usual the basis of $\Lie$ corresponding to the generators $x_i$. Then from \eqref{eq:Bockstein 1 relation} we have in $\fLie$:
    \begin{align*}
        p^{v_1}e_1\equiv \dbb{e_1,e_2},\quad p^{v_2}e_2\equiv \dbb{e_2,e_3},\quad  p^{v_3}e_3\equiv 0\pmod p.
    \end{align*}
    Certainly if $v_3=0$, then this yields a contradiction as then $e_3\in p\fLie$ and hence $d(U)\leq 2$. Thus  $v_3\geq 1$. We deduce from Corollary~\ref{cor:Mod p reduction of FAb LieAlgebras dim3} that either $\dbb{e_1,e_3}$ is trivial or up to scaling $e_2$ modulo $p$. In the first case, one concludes that $\mathfrak{b}_1^1$ is trivial and one can just reduce each $v_i$ by $1$. In the second case, we infer the following congruences from Proposition~\ref{prop:Bockstein 2 relation} 
    \begin{align*}
        p^{v_1}e_1&\equiv \dbb{e_1,e_2}\pmod{p^2,pe_2}\\
        p^{v_2}e_2&\equiv \dbb{e_2,e_3}\pmod{p^2,pe_2}\\
        p^{v_3}e_3&\equiv 0\pmod{p^2,pe_2}
    \end{align*}
    Again if $v_3=1$, then this yields a contradiction, as then $pe_3\equiv \lambda pe_2\pmod {p^2}$. Thus $v_3\geq 2$ and $\dbb{e_1,e_2}\equiv a\cdot pe_2\equiv a\cdot p \cdot \dbb{e_1,e_3}\pmod{p^2}$. This shows by Proposition~\ref{prop:higherBockstein Lie algebra} that $\mathfrak{b}^1_2$ is trivial. Thus one can lift these exact relations modulo $p^3$ and by induction until $p^{v_3}$, where the last congruence gives the desired contradiction.
\end{exmp}
\begin{rem}
    We have not been able to characterize when a similar behavior as in Example~\ref{exmp:rk2 and Lie algebra not abelian} occurs in general, just in terms of the linear algebraic data $A$ and $L$. It would be nice to incorporate this case also into Proposition~\ref{prop:rkL = 2}, since it can only deal with primes for which at least one $q_i$ satisfies $v_p(q_i-1)\leq 2$.
\end{rem}
\subsection{Infinitude of some \texorpdfstring{$G_{K,S}$}{GKS} with \texorpdfstring{$|S|=3$}{|S|=3}}
\label{ssec:Infinitue of GS}
To the authors' knowledge the only known method to show the infinitude of the groups $G_{K,S}$ is by applying the Golod--\v{S}afarevi\v{c} inequality in one way or another,  (see, e.g., \cites{GolodSafaravic1964,VenkovKoch1978,HajirMaire2002,AhlqvistCarlson2025}). 

Since for $|S|=3$ the group $G_{K,S}$ has three generators and at least three relations the Golod--\v{S}afarevi\v{c} theorem can only be applied if these relations are deep in the Zassenhaus filtration (see for example \cite{VenkovKoch1978}). 

We are able to show the infinitude of several $G_{K,S}$ with $|S|=3$ by considering subgroups of $G_{K,S}$ instead of $G_{K,S}$ itself. We demonstrate the strategy using the following example:
\begin{exmp}
\label{exmp:Infinite G_S with S=3}
    Let $p=3$, $K=\QQ$ and $S=\{19,229,571\}$. Then by Lemma~\ref{lem:rank and valuation relation} one can easily show that $G_{\QQ,S}$ has no nontrivial uniform quotients. 

    To show that $G_{\QQ,S}$ is infinite consider the field $K/\QQ$ given by the unique degree $9$ subextension of $\QQ(\zeta_{19})/\QQ$. Since $299\equiv 571\equiv 1\pmod {19}$ we see that $299$ and $571$ are completely split in $K/\QQ$. Hence there are in total $19$ places above $S$ in $K$, which we denote by $\overline{S}$. Consider the group $G_{K,\overline{S}}$, which is a closed subgroup of $G_{\QQ,S}$. By Corollary~\ref{cor:generator and relation rank of GS} we have that
    \begin{align*}
        d(G_{K,\overline{S}})&=1+|\overline{S}|+\dim_{\FF_p}(\B_{\overline{S}}(K))-(r_1+r_2)=11+\dim_{\FF_p}(\B_{\overline{S}}(K))\\
        r(G_{K,\overline{S}})&\leq |\overline{S}|+\dim_{\FF_p}(\B_{\overline{S}}(K))=19+\dim_{\FF_p}(\B_{\overline{S}}(K))
    \end{align*}
    In fact, by explicit computation one can show that $\B_{\overline{S}}(K)=0$, but even without this knowledge the Golod--\v{S}afarevi\v{c} inequality can be applied to $G_{K,\overline{S}}$, showing that $G_{K,\overline{S}}$ is infinite. Thus also $G_{\QQ,S}$ is infinite.
\end{exmp}
\begin{rem}
\label{rem:why our method is need for infinity}
    Let $K=\QQ$ and $S=\{q_1,q_2,q_3\}$ be a triple of tame primes such that $p^2\nmid q_i-1$. According to \cite{Luo2024}*{Thm. 7.13} if the group $G_{\QQ,S}$ is powerful, which is equivalent to $L$ from \eqref{eq:relations in three dim Lie algebra} being invertible by \cite{SymondsWeigel2000}*{Thm. 5.1.6}, then it is either finite or isomorphic to $\SL_2^1(\ZZ_p)$. Therefore to find infinite examples with $|S|=3$, that satisfy the uFM property, one either has to admit primes $q_i$ for which $p^2\mid q_i-1$ or consider matrices $L$ of lower rank. 
\end{rem}
\begin{thm}
\label{thm:Infinite and uFM GS}
    Given a number field $K$ and an odd prime $p$ unramified in $K/\QQ$ with $h_p(K)=1$. Then there exist infinitely many triples of tame places $S=\{v_1,v_2,v_3\}$ such that $G_{K,S}$ satisfies the uFM property and $G_{K,S}$ is infinite. 
\end{thm}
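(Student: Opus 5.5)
We prove the statement by constructing the triple $S$ in two stages. We first choose a linking diagram and norm data that force the uFM property through Lemma~\ref{lem:rank and valuation relation}. We then add a splitting condition that makes Golod--\v{S}afarevi\v{c} applicable to $G_{L,\overline{S}}$ for a suitable finite $p$-extension $L/K$, exactly as in Example~\ref{exmp:Infinite G_S with S=3}. Since $G_{L,\overline{S}}$ is an open subgroup of $G_{K,S}$, infinitude of $G_{L,\overline S}$ gives infinitude of $G_{K,S}$. The uFM property is a condition on the linking diagram only, namely on $A$ and $L$. Any diagram with, say, $(v_1,v_2,v_3)=(0,0,*)$ and $\rk L=3$, or $(0,0,0)$ with $\rk L\le 2$, lies in a $\lightning$ entry of Table~\ref{tab:rk3 Algorithm}. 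Because of Remark~\ref{rem:why our method is need for infinity}, I would take $(v_1,v_2,v_3)=(0,0,*)$ with $\rk L=3$, so that a large $v_3$ leaves room for the splitting requirement below.

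For the infinitude step I would take $v_1$ to be a tame place with $N(\mathfrak p_{v_1})\equiv 1\pmod p$ but not modulo $p^2$. Let $L_1/K$ be the degree-$p$ extension ramified only at $v_1$; it is the field $L(v_1)$ from the construction of linking numbers and exists since $h_p(K)=1$. I then want $v_2,v_3$ to split completely in $L_1$. By definition of $\ell$, this means $\ell(v_2,v_1)=\ell(v_3,v_1)=0$, i.e.\ the entries $\ell_{21}$ and $\ell_{31}$ of $L$ vanish.

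The splitting condition must be compatible with the uFM condition. With $\ell_{21}=\ell_{31}=0$, the matrix $L$ still has rank $3$ if, for example, $\ell_{12},\ell_{13},\ell_{23},\ell_{32}$ are chosen generically. Then Table~\ref{tab:rk3 Algorithm} in case $(0,0,*)$ gives $\lightning$, so the uFM property holds. Alternatively, one can use $(0,0,0)$ with $\rk L=2$, which is also $\lightning$.

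With this choice, $\overline S$ consists of one place above $v_1$ (it is totally ramified in $L_1$) and $p$ places above each of $v_2$ and $v_3$. So $|\overline S|=2p+1$. Corollary~\ref{cor:generator and relation rank of GS} applied to $L_1$ gives
\[
d(G_{L_1,\overline S})\ge 1+|\overline S|-p[K:\QQ]+\delta, \qquad r\le |\overline S|+\delta ,
\]
where $\delta=\dim_{\FF_p}\B_{\overline S}(L_1)$.

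This is the main obstacle: $r_1+r_2$ grows with $p[K:\QQ]$, so a single degree-$p$ layer does not give enough generators. The remedy I would try first is to iterate. Take a tower of degree $p^m$ ramified at auxiliary places, or at $v_1$ alone if the ramification allows it, which requires $p^m\mid N(\mathfrak p_{v_1})-1$; the inertia there is $\ZZ_p$-cyclic in the $p$-part. Then impose that $v_2,v_3$ split completely in this tower. This is a \v{C}ebotarev condition and is compatible with Lemma~\ref{lem:construction of extension with prescribed ramification}, since it asks for prescribed Frobenius in a finite extension. Then $|\overline S|\approx 2p^m$ while $r_1+r_2\le p^m[K:\QQ]$.

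For $[K:\QQ]$ large this still fails. I would therefore rather take $\overline S$ to contain many places above one prime that splits completely in a degree-$p^m$ extension, and use the Golod--\v{S}afarevi\v{c} form $r<d^2/4$ with $d\approx |\overline S|-p^m[K:\QQ]$. This needs $|\overline S|\ge C p^m$ with $C>[K:\QQ]$ plus a margin. The constant is achieved by making $v_1$ carry $v_p(N\mathfrak p_{v_1}-1)$ large, so that the tower can be cut out by $v_1$ itself, and by letting both $v_2$ and $v_3$ split completely.

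If even this is insufficient, the fallback is the Hajir--Maire strategy. Choose the tower unramified outside $v_1$ with $v_2,v_3$ split. The Golod--\v{S}afarevi\v{c} inequality $r\le d^2/4$ only needs $d\gtrsim 2\sqrt{|\overline S|}$, which holds once $2p^m - p^m[K:\QQ]$ is replaced by an honest count. I expect the numerology of this last step, balancing $|\overline S|$ against $r_1+r_2$ of the tower, to be the delicate point.

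Finally, Theorem~\ref{thm:Linking diagram approximation by arithmetic} and Lemma~\ref{lem:construction of extension with prescribed ramification} realize the chosen diagram, norm congruences and Frobenius conditions by \v{C}ebotarev classes of positive density. This yields infinitely many such triples $S$.
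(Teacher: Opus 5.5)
\textbf{The infinitude step has a genuine gap}, and it is exactly the ``honest count'' you flag as delicate. You treat $\delta=\dim_{\FF_p}\B_{\overline S}(L)$ in Corollary~\ref{cor:generator and relation rank of GS} as a harmless nonnegative term, which leaves $d(G_{L,\overline S})\ge 1+|\overline S|-r_L$. In your construction $|\overline S|=1+2p^k$, whereas $r_L=p^kr_K$. So this estimate is empty as soon as $r_K\ge 2$. Enlarging the tower does not help, since $|\overline S|$ and $r_L$ both grow linearly in $p^k$, with slopes $2$ and $r_K$.

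The missing idea is that $\delta$ is the cure, not an error term. Since $d=1+|\overline S|+\dim\B_{\overline S}(L)-r_L$, you must force $\B_{\overline S}(L)$ to be as large as possible. The paper does this in three steps.
(i) Choose $v_1$ by a \v{C}ebotarev condition in a Kummer extension obtained by adjoining $\zeta_{p^k}$ and $p^k$-th roots of units of $K$. Then, by Gras, there is a cyclic $L/K$ of degree $p^k$ ramified only at $v_1$. Note that $p^k\mid N(\mathfrak p_{v_1})-1$ together with $h_p(K)=1$ does not suffice for this; likewise the existence of $L(v_1)$ requires $\B_{\{v_1\}}(K)=\B_\emptyset(K)$, not just $h_p(K)=1$.
(ii) Observe that $h_p(L)=1$, because $L/K$ has a single, totally ramified place and $h_p(K)=1$, and that $\zeta_p\notin L$. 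Hence $\dim\B_\emptyset(L)=r_L-1$ and $\dim\B_{\{w_1\}}(L)=r_L-1-\varepsilon$ with $\varepsilon\in\{0,1\}$.
(iii) Require of $v_2,v_3$, besides complete splitting in $L$, the further \v{C}ebotarev condition $\B_{\overline S}(L)=\B_{\{w_1\}}(L)$.

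Then $r_L$ cancels: $d(G_{L,\overline S})=1+2p^k-\varepsilon$ while $r(G_{L,\overline S})\le(2+r_K)p^k-\varepsilon$. The Golod--\v{S}afarevi\v{c} inequality therefore holds as soon as $p^k\ge r_K+2$. This already works for $k=1$ when $p\ge r_K+2$, contrary to your claim that a single degree-$p$ layer cannot give enough generators.

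\textbf{The uFM step also contains an error.} With $\ell_{21}=\ell_{31}=0$, the rows $L_{2-}=(\ell_{23},0,0)$ and $L_{3-}=(-\ell_{32},0,0)$ are proportional, so $\rk L\le 2$. Your choice ``$(0,0,*)$ with $\rk L=3$'' therefore cannot occur. Your alternative $(0,0,0)$ conflicts with the large $v_p(N(\mathfrak p_{v_1})-1)$ your later construction needs.

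The repair is easy. Suppose $v_p(N(\mathfrak p_{v_2})-1)=1$ and either all linking numbers vanish (as in the paper) or also $v_p(N(\mathfrak p_{v_3})-1)=1$. Then, since $\ell_{21}=\ell_{31}=0$, Lemma~\ref{lem:rank and valuation relation} yields the uFM property whatever $v_1$ is.
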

\begin{proof}
    For a number field $M$ we write $r_M\coloneq r_1+r_2$, where $(r_1,r_2)$ is the signature of $M$. Choose a positive integer $k$ such that $p^k\geq r_K+1$.

    We first construct $v_1$. Let $K_k\coloneq K(\zeta_{p^k})$. Then we choose $v_1$ by the \v{C}ebotarev Density Theorem such that the Frobenius
    \begin{align*}
        \bigg(v_1,K_k\Big(\sqrt[p^k]{\mathcal{O}_K^\times V_\emptyset^{\smash{p^{k-1}}}}\Big)/K\bigg)
    \end{align*}
    is trivial. Then by \cite{Gras2003}*{Cor. 2.9.1} there exists a cyclic extension $L/K$ of degree $p^k$ ramified exactly at $v_1$. Note that by construction $v_1$ is completely split in $K(\zeta_{p^k})/K$ and thus $p^k\mid N(v_1)-1$. Furthermore, $(v_1,{\rm Gov}_\emptyset/K)=1$ and thus $\B_{\{v_1\}}(K)=\B_\emptyset(K)$.

    By slightly varying Lemma~\ref{lem:construction of extension with prescribed ramification} one can construct two tame places $v_2$ and $v_3$ satisfying the following conditions for $S=\{v_1,v_2,v_3\}$
    \begin{enumerate}
        \item $v_p(N(v_2)-1)=1$,
        \item $\ell(v_i,v_j)=0$ for each $i,j$,
        \item $\B_{\{v_1,v_2,v_3\}}(K)=\B_\emptyset(K)$,
        \item $v_2$ and $v_3$ are completely split in $L/K$,
        \item $\B_{\overline{S}}(L)=\B_{\{w_1\}}(L)$, where $w_1$ is the unique place in $L$ above $v_1$.
    \end{enumerate}
    Thus all linking numbers are zero, but there is at least one place in $S$ (namely $v_2$), for which $v_p(N(v_2)-1)=1$. Thus by Lemma~\ref{lem:rank and valuation relation} we conclude that $G_{K,S}$ has the uFM property. We notice that
    \begin{align*}
        \dim_{\FF_p}(\B_{\bar{S}}(L))&=\dim_{\FF_p}(\B_{\{v_1\}}(L))=\dim_{\FF_p}(\B_\emptyset(L))-\delta \overset{(*)}=r_L-1-\delta\\&=p^kr_K-1-\delta
    \end{align*}
    where $\delta$ is either $0$ or $1$. The equality ($*$) follows from $h_p(L)=1$, since $L/K$ is totally ramified at exactly $v_1$. The group $G_{L,\bar{S}}$ is an open subgroup of $G_{K,S}$, for which we verify the Golod--\v{S}afarevi\v{c} inequality. By Corollary~\ref{cor:generator and relation rank of GS} we have 
    \begin{align*}
        d(G_{L,\bar S})&=1+|\bar S|+\dim(\B_{\bar S}(L))-r_L=(1+2p^k)-\delta\quad \text{and}\\
        r(G_{L,\bar S})&\leq |\bar S|+  \dim(\B_{\bar S}(L))=1+2p^k+p^kr_K-1-\delta=(2+r_K)p^k-\delta .
    \end{align*}
    We observe that
    \begin{align*}
        d(G_{L,\bar S})^2/4&\geq \tfrac{1}{4}-\delta+p^k(1+p^k)>(2+r_K)p^k-\delta\geq r(G_{L,\bar S}).
    \end{align*}
    Thus, $G_{L,\bar S}$ satisfies the Golod-\v{S}afarevi\v{c} inequality and is therefore non-analytic and hence infinite. Therefore, we can conclude the same for $G_{K,S}$.
\end{proof}
\begin{rems}    
Let $K$ be $\QQ$ or imaginary quadratic and $|S|=3$. Thus $d(G_{K,S})=r(G_{K,S})=3$.
\begin{enumerate}
    \item {\color{black} Following the construction of the proof of Theorem~\ref{thm:Infinite and uFM GS} it} is not necessary to pass to the group $G_{L,S}$, as there are exactly three relations and these are of depth at least $3$ in the Zassenhaus filtration. Thus a refined version of the Golod--\v{S}afarevi\v{c} inequality (see \cite{Koch2002}*{Thm. 7.20}) already implies the infinitude of $G_{K,S}$. 
    \item In this situation it is also possible to construct cases, in which the matrix $L$ has rank up to $2$ using the same strategy as in the proof of Theorem~\ref{thm:Infinite and uFM GS} and following Example~\ref{exmp:Infinite G_S with S=3}. 
\end{enumerate}
\end{rems}
\section{Introducing additional splitting}
\label{sec:Additional splitting}
The idea of considering the uFM property for $G_{K,S}^T$ instead of $G_{K,S}$ is not new. For example it has been applied by F.~Hajir and C.~Maire in \cite{HajirMaire2022} to extend a method developed by N.~Boston in \cites{Boston1992,Boston1999}. The ideas for this section originate from joint work with O.~Hamza and D.~Lim (see \cite{FeuerpfeilHamzaLim2026}). The author would like to thank them for the inspiration and valuable inputs on that problem.
\subsection{General splitting}
\label{ssec:general splitting}
We first start with a simple group-theoretic lemma.
\begin{lem}\label{lem:uFM for FAB after quotient}
    Let~$G$ be a pro-$p$ group with finite abelianization and $n\geq 2$. Fix a minimal generating set~$x_1,...,x_d$ and set $n_\omega$ to be the maximum of $n$ and $\log_p([G:P_n(G)])$. Let~$\mathcal{T}\coloneq \{\tau_1,\dots , \tau_{d-2}\}\subseteq G$ be any family such that for $j=1,...,d-2$
    \begin{align*}
        \tau_j\equiv x_j^{p^{n_{\omega}}} \pmod{P_{n_\omega+2}(G)}.
    \end{align*}
    Denote by $G^\mathcal{T}$ the quotient of $G$ by the normal subgroup generated by $\mathcal{T}$. Then $G^{\mathcal{T}}$ has the uFM property and~$G/P_n(G)\cong G^\mathcal{T}/P_n(G^\mathcal{T})$. 

    Furthermore, if $4r(G)<d(G)^2$, then for $n$ sufficiently large and any family $\mathcal{T}$ as above, $G^\mathcal{T}$ is non-analytic and hence infinite.
\end{lem}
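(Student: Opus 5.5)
The plan is to treat the three assertions separately, using the lower $p$-central series throughout and working with a minimal presentation $1\to R\to F\to G\to 1$ as in \eqref{eq:presentation}. I index so that $P_1(G)=G$ and $P_2(G)=\Phi(G)$, matching the paper's remark; the definition as printed gives $P_0(G)=G$, so this needs a consistency check. For any $x\in G$ one has $x^{p^m}\in P_{m+1}(G)$. Since also $P_{n_\omega+2}(G)\subseteq P_{n_\omega+1}(G)$, each $\tau_j$ lies in $P_{n_\omega+1}(G)\subseteq P_{n+1}(G)\subseteq P_n(G)$, using $n_\omega\geq n$.

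\emph{The isomorphism $G/P_n(G)\cong G^{\mathcal T}/P_n(G^{\mathcal T})$.} Let $N$ be the closed normal subgroup generated by $\mathcal T$. By the above, $N\subseteq P_n(G)$. The lower $p$-central series is compatible with surjections, so $P_n(G/N)=P_n(G)N/N=P_n(G)/N$. Hence $G^{\mathcal T}/P_n(G^{\mathcal T})\cong G/P_n(G)$. In particular, because $n\geq 2$, we get $d(G^{\mathcal T})=d(G)$ and $G^{\mathcal T}$ has the same images of $x_1,\dots,x_d$ as minimal generators.

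\emph{The uFM property.} Suppose $\pi\colon G^{\mathcal T}\twoheadrightarrow U$ is a nontrivial uniform quotient. Composing with $G\to G^{\mathcal T}$, the group $U^{\rm ab}$ is a quotient of the finite group $G^{\rm ab}$. So $U$ is FAb, and Proposition~\ref{prop:FAB uniform groups} gives $d(U)\geq 3$.

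For uniform $U$ we have $P_{k+1}(U)=U^{p^k}$, and the $p^k$-power map induces an isomorphism $U/P_2(U)\to P_{k+1}(U)/P_{k+2}(U)$ (\cite{DDMS1999}). Since $\pi$ maps $P_k(G)$ onto $P_k(U)$, the congruence $\tau_j\equiv x_j^{p^{n_\omega}}\pmod{P_{n_\omega+2}(G)}$ together with $\pi(\tau_j)=1$ gives $\pi(x_j)^{p^{n_\omega}}\in P_{n_\omega+2}(U)$. Injectivity of the power map then forces $\pi(x_j)\in\Phi(U)$ for $j=1,\dots,d-2$. Therefore $U$ is generated modulo its Frattini subgroup by $\pi(x_{d-1}),\pi(x_d)$, so $d(U)\leq 2$, a contradiction. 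The bound $n_\omega\geq \log_p[G:P_n(G)]$ does not seem to be needed for this step; I expect it is there only to make $n_\omega$ grow with $n$, which is what the last part uses.

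\emph{Infinitude.} I would use the Golod--\v{S}afarevi\v{c} criterion with weights, with respect to the Zassenhaus filtration $(D_k)$. A presentation of $G^{\mathcal T}$ is obtained from that of $G$ by adding lifts $\widehat\tau_j\in F$. Since $P_k\subseteq D_k$ and the preimage of $D_k(G)$ in $F$ is $D_k(F)R$, I can choose $\widehat\tau_j\in D_{n_\omega+1}(F)$. The relations of $R$ can be taken in $D_2(F)$, so $G^{\mathcal T}$ has a presentation with $d$ generators, $r(G)$ relations of degree $\geq 2$, and $d-2$ relations of degree $\geq n_\omega+1$.

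It then suffices to find $t_0\in(0,1)$ with
\[
1-dt_0+r(G)t_0^2+(d-2)t_0^{\,n_\omega+1}<0.
\]
The hypothesis $4r(G)<d^2$ gives a point $t_0\in(0,1)$ where $1-dt_0+r(G)t_0^2<0$. Since $n_\omega\geq n\to\infty$, the term $(d-2)t_0^{n_\omega+1}$ tends to $0$, so the inequality holds for $n$ large, uniformly in the choice of $\mathcal T$. Golod--\v{S}afarevi\v{c} groups are not $p$-adic analytic (Lubotzky), hence $G^{\mathcal T}$ is non-analytic and in particular infinite.

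The step I expect to be the main obstacle is this last one. The $\tau_j$ are only controlled modulo $P_{n_\omega+2}$, which yields Zassenhaus depth growing only linearly in $n_\omega$ rather than like $p^{n_\omega}$. I must check carefully that this linear growth still suffices, and that the weighted Golod--\v{S}afarevi\v{c} statement (e.g.\ \cite{Koch2002}*{Thm.~7.20}) applies with these mixed degrees.
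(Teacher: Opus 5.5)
Your proposal is correct and follows essentially the same route as the paper. Showing $\mathcal{T}\subseteq P_n(G)$ gives the isomorphism. The power-map isomorphism for the uniform quotient $U$ forces $\pi(x_1),\dots,\pi(x_{d-2})\in\Phi(U)$, so $d(U)\leq 2$. Infinitude then follows from the refined Golod--\v{S}afarevi\v{c} inequality together with $P_k\subseteq D_k$.

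The one difference is how you get $\tau_j\in P_n(G)$: you use $x^{p^m}\in P_{m+1}(G)$ and $n_\omega\geq n$, whereas the paper uses that the order of $\overline{x}_j$ in $G/P_n(G)$ divides $p^{n_\omega}$. Your version shows the index clause in the definition of $n_\omega$ is superfluous. Your worry about Zassenhaus depth growing only linearly is unfounded, since for a fixed $t_0<1$ you have $t_0^{n_\omega+1}\to 0$, exactly as you already argued.
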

\begin{proof}
    Since the order of $\overline{x}_j$ in $G/P_n(G)$ divides $p^{n_\omega}$ it follows that each $\tau_j\in P_n(G)$ for each $j=1,...,d-2$, implying $\mathcal{T}\subseteq P_n(G)$ and therefore 
    \begin{align*}
        G/P_n(G)\cong G^\mathcal{T}/P_n(G^\mathcal{T}).
    \end{align*}
    To show the uFM property, assume that $U$ is a nontrivial uniform quotient of $G^\mathcal{T}$ and denote by $\pi$ the composition of $G\to G^{\mathcal{T}}\to U$. Since $G$ has finite abelianization, so does $U$. By Lemma~\ref{prop:FAB uniform groups}, we deduce $d(U)\geq 3$. For each $j=1,...,d-2$
    \begin{align*}
        \pi(x_j)^{p^{n_\omega}}=\pi\big(x_j^{p^{n_\omega}}\big)=\pi\big(\tau_j^{-1}x_j^{p^{n_\omega}}\big)\in \pi(P_{n_\omega+2}(G))\subseteq P_{n_\omega+2}(U)=U^{p^{n_\omega+1}}.
    \end{align*}
    The uniformity of $U$ implies $\pi(x_i)\in U^p=\Phi(U)$ for $i=1,...,d-2$, and thus $d(U)\leq 2$. A contradiction.

    The last statement follows from refined version of the Golod--\v{S}afarevi\v{c} theorem together with the fact that $P_n(G)$ is always contained in the $n^{\rm th}$ term of the Zassenhaus filtration. 
\end{proof}
We will apply this lemma to $G=G_{K,S}$, where we will identify $\mathcal{T}$ with a set of Frobenius elements of a set of places $T$ and hence $G^\mathcal{T}$ can be seen as the group $G_{K,S}^T$. To choose $T$ we use the \v{C}ebotarev density theorem and thus we only have control over the Frobenii at $T$ in a finite extension. This is why it was crucial to allow any set $\mathcal{T}$ satisfying the conditions in Lemma~\ref{lem:uFM for FAB after quotient}. 
\begin{thm}
\label{thm:additonal d-2 splitting}
    Given a number field $K$, a set of tame places $S$, and a finite extension $L/K$. Let $d\coloneq d(G_{K,S})$ Then there exist $d-2$ places $T$ such that the group $G_{K,S}^T$ has the uFM property and $T$ is completely split in $L/K$.

    Let $K=\QQ$ or $K$ be imaginary quadratic with $h_p(K)=1$. If $|S|\geq 5$, then $T$ can be chosen such that $G_{K,S}^T$ is non-analytic.
\end{thm}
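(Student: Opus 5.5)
The plan is to deduce the statement from Lemma~\ref{lem:uFM for FAB after quotient} applied to $G\coloneq G_{K,S}$, with each abstract element $\tau_j$ realised as a Frobenius element of a place $t_j$. The \v{C}ebotarev density theorem supplies these places, and the extension $L/K$ is absorbed by enlarging $n$.

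\emph{Step 1: the lemma applies.} Since every place of $S$ is tame, class field theory shows that $G^{\rm ab}$ is finite: it is a quotient of a ray class group modulo $\prod_{v\in S}\fp_v$. Fix a minimal generating set $x_1,\dots,x_d$ of $G$. We may replace $L$ by its Galois closure over $K$, since complete splitting in the Galois closure implies complete splitting in $L$. Let $H\subseteq G$ be the image of $\Gal(K_S(p)/K_S(p)\cap L)$; it is open in $G$. The groups $P_n(G)$ form a basis of open neighbourhoods of $1$ in the finitely generated pro-$p$ group $G$, so we can choose $n\geq 2$ with $P_n(G)\subseteq H$. Let $n_\omega$ be as in the lemma. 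Then $x_j^{p^{n_\omega}}\in P_{n_\omega+1}(G)\subseteq P_n(G)\subseteq H$, so these elements act trivially on $K_S(p)\cap L$.

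\emph{Step 2: choose $T$.} Let $M$ be the fixed field of $P_{n_\omega+2}(G)$; it is a finite Galois extension of $K$. By Step~1, for each $j$ there is an element $\sigma_j\in\Gal(ML/K)$ whose restriction to $M$ is the image of $x_j^{p^{n_\omega}}$ and whose restriction to $L$ is trivial. These two conditions are compatible because both restrictions are trivial on $M\cap L\subseteq K_S(p)\cap L$. By \v{C}ebotarev we pick pairwise distinct places $t_1,\dots,t_{d-2}$ outside $S$, not above $p$, and unramified in $ML$, together with primes above them whose Frobenius in $\Gal(ML/K)$ is exactly $\sigma_j$. Lift the Frobenius of the chosen prime above $t_j$ to $\tau_j\in G$. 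Then $\tau_j\equiv x_j^{p^{n_\omega}}\pmod{P_{n_\omega+2}(G)}$, and $t_j$ splits completely in $L$. Since $t_j$ is unramified, its decomposition groups are procyclic and generated by conjugates of $\tau_j$. Hence $G_{K,S}^T$ is $G/\langle\langle\tau_1,\dots,\tau_{d-2}\rangle\rangle = G^{\mathcal T}$, and the lemma gives the uFM property.

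\emph{Step 3: non-analyticity.} For $K=\QQ$ or $K$ imaginary quadratic we have $r=1$ and $\B_\emptyset(K)=0$, hence $\B_S(K)=0$. Corollary~\ref{cor:generator and relation rank of GS} then gives $d(G)=|S|$ and $r(G)\leq|S|$. The inequality $4r(G)<d(G)^2$ holds as soon as $|S|\geq 5$. The last part of Lemma~\ref{lem:uFM for FAB after quotient} then shows that for $n$ sufficiently large and any admissible family $\mathcal T$, the group $G^{\mathcal T}$ is non-analytic. We enlarge $n$ in Step~1 accordingly, which is harmless since any larger $n$ still satisfies $P_n(G)\subseteq H$.

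The main obstacle is the compatibility in Step~2 between prescribing the Frobenius in $M$ and requiring complete splitting in $L$. This is exactly why $n$ must first be enlarged so that $P_n(G)$ acts trivially on $K_S(p)\cap L$. A minor side issue is the hypothesis $\zeta_p\notin K$ in Corollary~\ref{cor:generator and relation rank of GS}, which excludes $K=\QQ(\sqrt{-3})$ with $p=3$. That case would need the general formula from \cite{NSW2000}, or it should be excluded.
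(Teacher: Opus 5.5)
Your proposal is correct and follows the paper's proof essentially step by step: you apply Lemma~\ref{lem:uFM for FAB after quotient} to $G_{K,S}$ with $n$ chosen so that $P_n(G_{K,S})$ fixes $K_S(p)\cap L$, and you use \v{C}ebotarev to realize the $\tau_j$ as Frobenius elements that are trivial on $L$. The only minor deviation is in the non-analyticity part: the paper checks directly that $1-dt+dt^2+(d-2)t^3$ has a root in $[0,1]$ for $d\geq 5$ (the new relations lie in Zassenhaus depth $3$), whereas you invoke the last clause of the lemma after enlarging $n$; both are the same refined Golod--\v{S}afarevi\v{c} argument.
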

\begin{proof}
    Without loss of generality, we can assume that $L/K$ is a Galois extension. Otherwise, we pass to the Galois closure of $L$. Let $L'\coloneq K_S(p)\cap L$ and choose $n\geq 3$ such that $P_n(G_{K,S})\subseteq \Gal(K_S(p)/L')$. Set $n_\omega$ as in Lemma~\ref{lem:uFM for FAB after quotient}. Let $K_{n_\omega+2}$ be the fixed field of $P_{n_\omega+2}(G_{K,S})$ and denote by $x_1,...,x_d$ a set of minimal generators of $G_{K,S}$. 

    Then view $x_j^{p^{n_\omega}}$ as elements of $\Gal(K_{n_\omega+2}/K)$. Note that by construction they are elements of $\Gal(K_{n_\omega+2}/L')$. Using the \v{C}ebotarev density theorem, we choose places $w_1,...,w_{d-2}$ of $L'K_{n_\omega+2}$ not above $S$ such that for
    \begin{align*}
        \sigma_j\coloneq (1,x_j^{p^{n_\omega}})\in \Gal(LK_{n_\omega+2}/L')\cong \Gal(L/L')\times \Gal(K_{n_\omega+2}/L')
    \end{align*}
    one has $\sigma_j=(w_j,LK_{n_\omega+2}/K)$. Set $T=\{v_1,...,v_{d-2}\}$ where the $v_j$ are the places of $K$ below $w_j$. Then each $v_j$ is split in $L/K$ and that the Frobenius elements of the $w_j$ in $K_S(p)/K$ are a family as required in Lemma~\ref{lem:uFM for FAB after quotient}. This shows the first part.

    Note that by construction, the $d-2$ new relations are contained in the third term of the Zassenhaus filtration. The polynomial $1-dt+dt^2+(d-2)t^3$ has a root in the interval $[0,1]$ for $d\geq 5$ from which the second claim follows.
\end{proof}
{\color{black}
Determining the additional places $T$ explicitly is quite difficult in general, since the (finite) extensions required to just find the elements in the Galois group are already very large, which makes their computation infeasible. Assuming that one has the extension $K_{n_\omega+2}/K$ and its Galois group $G_{n_\omega+2}$ given, then the problem first amounts to finding the right elements in $G_{n_\omega+2}$. Since the ramification in $K_{n_\omega+2}$ is only tame, one can give an easy upper bound on the discriminant of $K_{n_\omega+2}$, which, in turn, by an explicit version of the \v{C}ebotarev density theorem, yields an upper bound for the norms of the elements in $T$, which one has to check.
}
\subsection{Additional splitting for \texorpdfstring{$|S|=4$}{|S|=4}}
\label{ssec:splitting for S=4}
In some situations one can do better than what is guaranteed by Theorem~\ref{thm:additonal d-2 splitting}.
\begin{prop}
\label{prop:S=4 T=1}
    Let $\Lambda=(\Gamma,a,\ell)$ be a linking diagram with four vertices and such that each $v_p(a_v)=0$ for all $v\in V(\Gamma)$ and $G$ be a pro-$p$ weakly presented by $\Lambda$. 

    Then there exists an element $\tau_0\in [G,G]$ such that for each $\tau\equiv \tau_0\pmod {P^*_3(G)}$ the group $G/\langle\!\langle \tau\rangle\!\rangle$ has the uFM property.
\end{prop}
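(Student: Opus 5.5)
The idea is to choose $\tau_0$ so that the single new congruence it imposes on a putative uniform quotient is incompatible with that quotient having dimension $3$, after first showing that dimension $4$ is impossible. I propose $\tau_0\coloneq[x_1,x_2][x_3,x_4]$, where $x_1,\dots,x_4$ are the generators attached to the vertices of $\Gamma$. The point of this choice is that its coefficient vector $b$ in $\Lambda^2(\FF_p^4)$ is a non-degenerate alternating form ($b\wedge b\neq0$). In particular $b$ is not decomposable, so it is not of the form $c\wedge u$.

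\emph{Setup.} Suppose $\pi:G/\langle\!\langle\tau\rangle\!\rangle\twoheadrightarrow U$ is a nontrivial uniform quotient, with $\Lie=\Lie(U)$ and $e_v=\log\pi(x_v)$. Since every $a_v$ is a unit, $G^{\rm ab}$ is finite, hence so is $U^{\rm ab}$. By Proposition~\ref{prop:FAB uniform groups}, $\Lie\otimes\QQ_p$ is perfect and $d(U)\in\{3,4\}$.

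\emph{Ruling out $d(U)=4$.} A perfect Lie algebra of dimension $4$ over a field of characteristic $0$ does not exist. Indeed, by Levi decomposition the semisimple part would have dimension $3$, the radical would be a $1$-dimensional module, hence trivial, and then the algebra is not perfect. So $d(U)=3$.

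\emph{Consequences of $d(U)=3$.} $\Lie\otimes\QQ_p$ is a form of $\mathfrak{sl}_2$, and the kernel of $G/\Phi(G)\to U/\Phi(U)$ is a line $\FF_p c$. The relations $\rho_v$ give, via \eqref{eq:Bockstein 1 relation},
\begin{align*}
a_ve_v\equiv\sum_w\ell(v,w)\dbb{e_v,e_w}\pmod p .
\end{align*}
Because each $a_v$ is a unit, every $e_v$ lies in $\dbb{\fLie,\fLie}$ modulo $p$. Hence $\fLie_p$ is a perfect $3$-dimensional $\FF_p$-Lie algebra. This is the case where all $v_i=0$ in Proposition~\ref{prop:FAb Lie algebra dim3}, so the bracket map $\Lambda^2(\fLie_p)\to\fLie_p$ is an isomorphism.

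\emph{The contradiction.} The relation $\tau\equiv\tau_0\pmod{P_3^*}$ has all $p$-power exponents zero, so \eqref{eq:Bockstein 1 relation} gives
\begin{align*}
\dbb{e_1,e_2}+\dbb{e_3,e_4}\equiv0\pmod p .
\end{align*}
By injectivity of the bracket on $\Lambda^2(\fLie_p)$, the image of $b$ under $\Lambda^2(\FF_p^4)\to\Lambda^2(\FF_p^4/\FF_pc)\cong\Lambda^2(\fLie_p)$ must vanish. The kernel of this map is $c\wedge\FF_p^4$, so $b=c\wedge u$ for some $u$, contradicting $b\wedge b\neq0$. Since only the class of $\tau$ modulo $P_3^*$ was used, the argument works for every $\tau\equiv\tau_0\pmod{P_3^*(G)}$.

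The step needing the most care is the identification of $\fLie_p$ as perfect with bijective bracket. I would check it via Corollary~\ref{cor:Mod p reduction of FAb LieAlgebras dim3}: $\dim\fLie_p'=3$ forces all $v_i=0$ in the normal form of Proposition~\ref{prop:FAb Lie algebra dim3}. I would also double-check that the linear-dependency bookkeeping is compatible with the weak presentation, i.e. that only congruences modulo $P_3^*(F)$ enter.
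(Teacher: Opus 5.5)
Your argument is correct, but it takes a different route from the paper's proof.

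\textbf{The paper's route.} The paper tailors $\tau_0$ to the diagram. It first reorders so that $\ell(v_1,v_j)\neq0$ for some $j$; the totally disconnected case is trivial. It then takes
\[
\tau_0\equiv\prod_{i=2}^4[x_i,y_i]^{\ell(v_1,v_i)/a_i},\qquad y_i=\prod_j x_j^{\ell(v_i,v_j)}.
\]
Substituting the relations $a_ie_i\equiv\dbb{e_i,b_i}$ with $b_i=\log\pi(y_i)$, the new congruence collapses to the linear congruence $b_1=\sum_{i\ge2}\ell(v_1,v_i)e_i\equiv0\pmod p$. This kills $e_1$ (via $a_1e_1\equiv\dbb{e_1,b_1}$) and gives a linear dependence among $e_2,e_3,e_4$. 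Hence $d(U)\le2$, contradicting Proposition~\ref{prop:FAB uniform groups} at once. Everything happens at the level of the congruences: there is no need to exclude $d(U)=4$ or to know anything about the bracket on $\fLie_p$.

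\textbf{Your route.} You use a single element $\tau_0=[x_1,x_2][x_3,x_4]$ that works for every diagram with unit $a_v$, independently of $\ell$ and $a$. In fact your argument shows that any $\tau_0\in[G,G]$ whose quadratic part is an indecomposable $2$-vector in $\Lambda^2(\FF_p^4)$ would do. The price is two extra ingredients: the Levi-decomposition argument excluding $4$-dimensional perfect Lie algebras in characteristic $0$, and the exterior-algebra step at the end. Your approach buys uniformity and avoids the case distinction. The paper's is more elementary and lands directly in $d(U)\le2$.

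\textbf{Two small remarks.} First, the step you flag as delicate is only a dimension count. The bracket $\Lambda^2(\fLie_p)\to\fLie_p$ is surjective because every $\bar e_v$ lies in the derived algebra, and both spaces are $3$-dimensional, so it is bijective. Proposition~\ref{prop:FAb Lie algebra dim3} is not needed. Second, the test $b\wedge b\neq0$ uses that $p$ is odd, which is the paper's standing assumption. Indecomposability of $b$ holds in every characteristic anyway: its alternating matrix has rank $4$, whereas $c\wedge u$ has rank at most $2$.
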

\begin{proof}    
    Denote by $x_1,...,x_4$ the generators of $G$ corresponding to the vertices $v_1,...,v_4$ of $\Gamma$. For each $i=1,...,4$ set $y_i\coloneq \prod_{j=1}^4x_j^{\ell(v_i,v_j)}$. Note that if $\ell(v_i,v_j)=0$ for each pair $(i,j)$, i.e., $\Gamma$ is totally disconnected, then the image of each $x_i$ in a uniform quotient $U$ is contained in $P_2(U)$ and thus $d(U)=0$. A contradiction.

    Thus we can assume that at least one $\ell(v_i,v_j)\neq 0$ and after reordering the vertices that $\ell(v_1,v_j)\neq 0$ for some $j$. Let $\tau$ be arbitrary such that 
    \begin{align*}
        \tau\equiv [x_2,y_2]^{\ell(v_1,v_2)/a_2}[x_3,y_3]^{\ell(v_1,v_3)/a_3}[x_4,y_4]^{\ell(v_1,v_4)/a_4}\pmod{P_3^*(G)}
    \end{align*}
    By the Koch type presentation we also see that $\tau\equiv y_1^p \pmod{P_3^*(G)}$.
    
    Set $G_1\coloneq G/\langle\!\langle \tau\rangle\!\rangle$ and assume that $\pi:G_1\twoheadrightarrow U$ is the projection of $G_1$ onto a nontrivial uniform quotient. Since $U^{\rm ab}$ is finite, we deduce by Proposition~\ref{prop:FAB uniform groups} that $d(U)\geq 3$.

    Let $e_i\coloneq \log(\pi(x_i))$ for $i=1,...,4$ and set $b_i\coloneq \log(\pi(y_i))$. Then we have the relations $a_ie_i\equiv \dbb{e_i,b_i}\pmod p$ for $i=1,...,4$ and additionally coming from $\tau$
    \begin{align*}
       0\equiv \sum_{i=2}^4a_i^{-1}\ell(v_1,v_i)\dbb{e_i,b_i}\equiv \sum_{i=2}^4 \ell(v_1,v_i)e_i\equiv b_1\pmod p.
    \end{align*}
    This immediately shows that $e_1\equiv \dbb{e_1,b_1}\equiv 0\pmod p$ and thus $d(U)\leq 3$. Since at least one $\ell(v_1,v_i)\neq 0$, the relation yields a linear dependence between $e_2,e_3,e_4$ modulo $p$ and thus $d(U)\leq 2$. This is a contradiction.
\end{proof}
In the number-theoretic situation $\tau$ can --- as before --- be identified with an additional place. Thus by means of the \v{C}ebotarev density theorem, we get the following corollary:
\begin{cor}
\label{cor:One Splitting for S=4}
    Let $K$ be a number field with $h_p(K)=1$ and $\zeta_p\not\in K$ and $S=\{v_1,...,v_4\}$ be a set of four tame places such that $p^2\nmid N(\mathfrak{q}_{v_i})-1$ for $i=1,...,4$. Then there exists a place $w$ of $K$ such that $w$ is completely split in $(K_S(p))^{\rm ab}/K$ and $G_{K,S}^{\smash{\{w\}}}$ has the uFM property.
\end{cor}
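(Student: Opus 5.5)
The plan is to reduce to Proposition~\ref{prop:S=4 T=1} applied to $G=G_{K,S}$, and then realize the abstract element $\tau$ as a Frobenius element via \v{C}ebotarev.

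\emph{Step 1: $G_{K,S}$ fits the hypotheses.} We need $G_{K,S}$ to be weakly presented by a linking diagram with four vertices and all $v_p(a_v)=0$. Since $h_p(K)=1$, the sequence \eqref{eq:generators of G_KS sequence} shows that $G_{K,S}$ is generated by the inertia groups at $v\in S$. If $\B_S=\B_\emptyset$, Koch's theorem gives that $G_{K,S}$ is weakly presented by $\Lambda_{K,S}$. Its weights are $a_v=(N(\mathfrak{q}_v)-1)/p$, and these are units because $p^2\nmid N(\mathfrak{q}_{v_i})-1$.

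If $\B_S\neq\B_\emptyset$, the number of generators drops. I would then redo the argument of Proposition~\ref{prop:S=4 T=1} with fewer generators: the local relations $x_v^{pa_v}[x_v,y_v]$ still hold for the tame inertia generators, since these are local relations. In the degenerate case $d(U)\le 2$ a contradiction arises anyway by Proposition~\ref{prop:FAB uniform groups}. Note that $G_{K,S}$ has finite abelianization because all $a_v$ are units. This case distinction is mildly technical, but I expect it to go through.

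\emph{Step 2: produce $\tau_0$ and lift it.} Proposition~\ref{prop:S=4 T=1} gives $\tau_0\in[G,G]$ such that \emph{every} $\tau\equiv\tau_0\pmod{P_3^*(G)}$ yields a uFM quotient. Let $M$ be the fixed field of $P_3^*(G_{K,S})$. Then $M/K$ is a finite Galois extension: $G_{K,S}$ is finitely generated, so $P_3^*$ is open. The field $M$ contains $K_S(p)^{\rm ab}$ because $P_3^*(G)\subseteq[G,G]$.

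By the \v{C}ebotarev density theorem, choose a place $w\notin S$ of $K$, unramified in $M$, such that some Frobenius of $w$ in $\Gal(M/K)$ equals the image of $\tau_0$. Since $\tau_0\in[G,G]$, the Frobenius of $w$ is trivial in $\Gal(K_S(p)^{\rm ab}/K)$, so $w$ splits completely there.

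\emph{Step 3: identify the quotient.} Take a Frobenius $\tau\in G_{K,S}$ of $w$ lifting the chosen one. Then $\tau\equiv\tau_0\pmod{P_3^*}$, and $G_{K,S}^{\{w\}}=G_{K,S}/\langle\!\langle\tau\rangle\!\rangle$, since imposing splitting at $w$ kills the conjugacy class of its Frobenius. Here the conjugacy ambiguity is harmless because the normal closure is taken. Proposition~\ref{prop:S=4 T=1} then gives the uFM property.

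The main obstacle is Step 1: guaranteeing the weak Koch presentation with exactly four generators and unit weights without the hypothesis $\B_S=\B_\emptyset$. If a direct argument fails, I would first check whether $\B_S=\B_\emptyset$ is implicitly needed, and otherwise handle the smaller-generator case directly via Lemma~\ref{lem:rank and valuation relation}-style linear algebra.
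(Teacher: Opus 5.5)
Your proposal is correct and follows the paper's argument: apply Proposition~\ref{prop:S=4 T=1} to $G_{K,S}$, then use \v{C}ebotarev in the fixed field of $P_3^*(G_{K,S})$ to realize a lift of $\tau_0$ as the Frobenius of a place $w$, which splits completely in $K_S(p)^{\rm ab}$ because $\tau_0\in[G_{K,S},G_{K,S}]$. The issue you raise in Step 1 is real, since the paper tacitly relies on Koch's theorem and hence on $\B_S=\B_\emptyset$, and your sketch closes it: otherwise $d(G_{K,S})\le 3$, and in the case $d=3$ one kills $\sigma_v^p$, where $\sigma_v$ is a Frobenius lift at some $v$ whose inertia generator is not in $\Phi(G_{K,S})$ (note $\sigma_v^p\in[G_{K,S},G_{K,S}]$ because all $a_v$ are units), which forces $e_v\equiv 0\pmod p$ in any uniform quotient and hence $d(U)\le 2$, contradicting Proposition~\ref{prop:FAB uniform groups}.
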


In special situations one can drop the assumption that $p^2\nmid N(\mathfrak{q}_{v_i})-1$. For $n\in \NN$ define a linking diagram $\Lambda=(\Gamma,a,\ell)$ by
\begin{center}
    \begin{tikzpicture}[]
        \ldVertex{v1}{0,0}{$v_1$}{$p^n$};
        \ldVertex{v2}{2,0}{$v_2$}{$p^n$};
        \ldVertex{v3}{2,1.5}{$v_3$}{$p^n$};
        \ldVertex{v4}{0,1.5}{$v_4$}{$p^n$};
        \ldEdge{v1}{v2}{1}{bend right=.7cm};
        \ldEdge{v2}{v3}{1}{bend right=1cm};
        \ldEdge{v3}{v4}{1}{bend right=.7cm};
        \ldEdge{v4}{v1}{1}{bend right=1cm};
    \end{tikzpicture}
\end{center}
By Theorem~\ref{thm:Linking diagram approximation by arithmetic} for a number field $K$ we find a set of four tame places $S$ such that $G_{K,S}$ is weakly represented by $\Lambda$. Next choose by the \v{C}ebotarev density theorem a place $\mathfrak{t}$ of $K$ such that its Frobenius coincides with $[x_1,x_2][x_3,x_4]$ in $G_{K,S}/P^*_3(G_{K,S})$ and thus following Example~\ref{exmp:four generators five relations} we see that $G_{K,S}^{\smash T}$ does not admit any nontrivial uniform quotients for $T=\{\mathfrak{t}\}$.

We found further such examples where one can find a single additional relation to apply this technique, but have not been able to identify an approach that subsumes all these constructions.
\begin{quest}
    Can the proof of Proposition~\ref{prop:S=4 T=1} be generalized beyond the case $v_p(a_i)=0$ for all $i$?
\end{quest}
\section{Statistical considerations}
\label{sec:numerical experiments}
Since by the theoretical results of the previous sections (especially \ref{ssec:FAb groups with three generators}), the success of our criterion relies on the precise linking numbers. This section examines in two directions how these behave and what consequences this has for the uFM property of the groups $G_{K,S}$. For simplicity we restrict here to the case $K=\QQ$ and denote $G_{\QQ,S}$ by $G_S$, although many of the observations remain valid over arbitrary number fields. Since the criteria developed are computationally effective, we can investigate them experimentally. The relevant source code relies on the computer algebra system OSCAR~\cite{OSCAR} and can be found under \cite{Feuerpfeil2026Code}.
\subsection{Uniform distribution of linking matrices}
\label{ssec:uniformity of linking matrix}
The proof of Theorem~\ref{thm:Linking diagram approximation by arithmetic}, which shows that any possible combination of linking numbers is realizable is an inductive application of Lemma~\ref{lem:construction of extension with prescribed ramification}, for which the set of valid places forms a \v{C}ebotarev class. This suggests that the linking numbers are in some sense uniformly distributed. In the case where $|S|=2$ this follows from a careful and quantitative study of the proof of Lemma~\ref{lem:construction of extension with prescribed ramification}. 

To make this intuition precise we first introduce some notation. For a prime $p$ and a positive integer $d$, we define $\nabla_{p,d}
\coloneq \{A\in \FF_p^{d\times d}:A_{ii}=0\text{ for all }i\}$ and $\mathcal{P}_{p,d}(x)$ the set of $d$-tuples of primes $(q_1,...,q_d)$ with each $q_i\equiv 1 \pmod p$ and $q_1<q_2<...<q_d\leq x$. For $(q_1,...,q_d)$ such a tuple we define
\begin{align*}
    \lambda(q_1,...,q_d)\coloneq \begin{pmatrix}
        0&\ell(q_1,q_2)&\cdots&\ell(q_1,q_d)\\
        \ell(q_2,q_1)&0&&\ell(q_2,q_d)\\
        \vdots&&\ddots&\vdots\\
        \ell(q_d,q_1)&\ell(q_d,q_2)&\cdots &0
    \end{pmatrix}\in \nabla_{p,d}.
\end{align*}
Note that this is technically not well defined, as the linking numbers depend on a choice of primitive roots, but this does not affect our results. 

We make the following heuristic based on the above observations.
\begin{heu}
\label{heu:matrices uniformly distributed}
    The matrices $\lambda(q_1,...,q_d)\in \nabla_{p,d}$ are asymptotically equidistributed in $\nabla_{p,d}$ as $q_i\to \infty$. In other words, for any matrix $M\in \nabla_{p,d}$
    \begin{align*}
        \frac{\left|\{(q_1,...,q_d)\in \mathcal{P}_{p,d}(x):\lambda(q_1,...,q_d)=M\}\right|}{|\mathcal{P}_{p,d}(x)|}\longrightarrow \frac{1}{p^{d(d-1)}}\qquad \text{as }x\to \infty.
    \end{align*}
\end{heu}
Since it is not feasible to check Heuristic~\ref{heu:matrices uniformly distributed} numerically for every matrix in $\nabla_{p,d}$, we reduce the data. To that end, we choose a finite set $\mathcal{A}\coloneq \{\alpha_1,...,\alpha_m\}$ of non-zero linear forms $\alpha_j:\nabla_{p,d}\to \FF_p$ and define
\begin{align*}
    D_{p,d,\mathcal{A}}(x)\coloneq \max_{\alpha \in \mathcal{A}}\frac{1}{|\mathcal{P}_{p,d}(x)|}\Big|{\sum}_{\mathbf{q}\in \mathcal{P}_{p,d}(x)}\zeta_p^{\alpha(\lambda(\mathbf{q}))}\Big|.
\end{align*}

To make predictions about what would follow from an honest equidistribution, we start with a lemma from probability theory. It uses the Rayleigh distribution $R(b)$ for a parameter $b>0$, which is given by the density function $x/b^2\exp(-x^2/(2b^2))$.
\begin{lem}
\label{lem:max of Rayleigh}
    Let $X_1,...,X_m$ be $m$ independent identically distributed random variables with $X_i\sim R(b)$ where $R(b)$ denotes the Rayleigh distribution for parameter $b$, then 
    \begin{align*}
        \mathbb{E}[\max_{i=1}^m X_i]\leq b(\sqrt{2\log m}+\sqrt{2\pi})
    \end{align*}
\end{lem}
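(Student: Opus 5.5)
We bound the expectation of the maximum by integrating its tail and controlling that tail with a union bound. Write $M\coloneq\max_{i=1}^m X_i$. Since $M\ge 0$,
\begin{align*}
    \mathbb{E}[M]=\int_0^\infty \mathbb{P}(M>t)\,dt .
\end{align*}
The Rayleigh tail is explicit: integrating the density gives $\mathbb{P}(X_i>t)=\exp(-t^2/(2b^2))$. The union bound then yields
\begin{align*}
    \mathbb{P}(M>t)\le \min\{1,\,m\exp(-t^2/(2b^2))\}.
\end{align*}
Independence is not even needed; the identical distribution suffices.

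Next we split the integral at the threshold $t_0\coloneq b\sqrt{2\log m}$, where $m\exp(-t_0^2/(2b^2))=1$. On $[0,t_0]$ we bound the integrand by $1$, which contributes $t_0=b\sqrt{2\log m}$. On $[t_0,\infty)$ we substitute $t=t_0+s$ and use $(t_0+s)^2\ge t_0^2+s^2$, valid since $t_0,s\ge 0$. This gives
\begin{align*}
    m\exp\big(-(t_0+s)^2/(2b^2)\big)\le \exp(-s^2/(2b^2)),
\end{align*}
so the tail piece is at most
\begin{align*}
    \int_0^\infty \exp(-s^2/(2b^2))\,ds=b\sqrt{\pi/2}.
\end{align*}
Adding the two pieces,
\begin{align*}
    \mathbb{E}[M]\le b\big(\sqrt{2\log m}+\sqrt{\pi/2}\big)\le b\big(\sqrt{2\log m}+\sqrt{2\pi}\big),
\end{align*}
which is the claimed bound.

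The case $m=1$ needs no separate treatment: then $t_0=0$ and the bound reduces to $\mathbb{E}[X_1]=b\sqrt{\pi/2}$, so the argument covers it.

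There is no real obstacle. The only points that need care are choosing the split point $t_0$ so the union bound becomes effective exactly there, and the elementary inequality $(t_0+s)^2\ge t_0^2+s^2$, which removes the factor $m$ on the tail.
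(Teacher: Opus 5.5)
Your proof is correct, and it takes a different route from the paper's.

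\textbf{The paper's route.} The paper only sketches the standard exponential-moment argument (Wainwright, Ex.~2.12). For $\lambda>0$ one has $e^{\lambda\mathbb{E}[M]}\le\mathbb{E}[e^{\lambda M}]\le m\,\mathbb{E}[e^{\lambda X_1}]$. The explicit Rayleigh moment generating function $1+\sqrt{2\pi}\,b\lambda\,e^{b^2\lambda^2/2}\Phi(b\lambda)$ is then controlled. One natural way is via $\Phi\le 1$ and $1+x\le e^x$, which bound it by $\exp(\sqrt{2\pi}\,b\lambda+b^2\lambda^2/2)$. Optimizing at $\lambda=\sqrt{2\log m}/b$ gives exactly $b(\sqrt{2\log m}+\sqrt{2\pi})$. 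So the constant $\sqrt{2\pi}$ comes from this crude linearization of the moment generating function.

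\textbf{Your route.} You integrate the tail instead, using only the closed form $\mathbb{P}(X_i>t)=e^{-t^2/(2b^2)}$, the union bound, and the split at $t_0=b\sqrt{2\log m}$. Every step checks out, including $(t_0+s)^2\ge t_0^2+s^2$ and $\int_0^\infty e^{-s^2/(2b^2)}\,ds=b\sqrt{\pi/2}$.

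\textbf{Comparison.} Your argument is more elementary: it needs no moment generating function, no Jensen step, and no optimization in $\lambda$. The case $m=1$ also needs no limit $\lambda\to 0$. It yields the sharper constant $\sqrt{\pi/2}$. With $b=1/\sqrt{2}$, as in Proposition~\ref{prop:Expected value a_max}, that bound would improve to $\sqrt{\log m}+\sqrt{\pi}/2$. The paper's argument, in turn, is the generic maximal inequality, usable whenever one controls only the moment generating function rather than the exact tail. As you note, neither argument actually uses independence.
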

\begin{proof}
    The proof is the standard exponential-moment argument used to estimate maxima of independent random variables (cf. \cite{Wainwright2019}*{Ex. 2.12}), together with the explicit moment generating function of the Rayleigh distribution given by
    \begin{align*}
        \mathbb{E}[\exp(tR(b))]=\int_{0}^\infty e^{btx}x\exp(\tfrac{-x^2}{2}){\rm d}x=1+\sqrt{2\pi}bt\exp(\tfrac{t^2b^2}{2})\Phi(bt)
    \end{align*}
    where $\Phi$ is the standard normal distribution function.
\end{proof}
\begin{prop}
\label{prop:Expected value a_max}
    Fix $r\geq 3$ and an integer $m\geq 1$. For each $1\leq j\leq m$ let $X_{j,1}$,...,$X_{j,n}$ be independent uniform variables with values in $\mu_r$. Set 
    \begin{align*}
        D\coloneq \max_{j=1}^m \tfrac{1}{n}|X_{j,1}+...+X_{j,n}|.
    \end{align*}
    Then $\limsup_{n\to \infty}\sqrt{n}\mathbb{E}[D]\leq   \sqrt{\log m}+\sqrt{\pi}$.
\end{prop}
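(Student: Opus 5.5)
The plan is to reduce to Lemma~\ref{lem:max of Rayleigh} via the central limit theorem. Fix $j$ and write $S_{j,n}\coloneq X_{j,1}+\dots+X_{j,n}$, viewed as a random vector in $\RR^2\cong\mathbb{C}$. Since $X_{j,i}$ is uniform on $\mu_r$ with $r\geq 3$, we have $\mathbb{E}[X]=0$, $\mathbb{E}[X^2]=0$ (because $\sum_{\zeta\in\mu_r}\zeta^2=0$ for $r\geq 3$), and $\mathbb{E}[|X|^2]=1$. Hence the real and imaginary parts are uncorrelated, each with variance $1/2$; this is exactly where $r\geq 3$ enters. The multivariate CLT then gives $S_{j,n}/\sqrt{n}\to Z_j$ in distribution, where $Z_j$ is a standard complex Gaussian with covariance $\tfrac12 I_2$. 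Consequently $|S_{j,n}|/\sqrt n\to |Z_j|\sim R(1/\sqrt2)$.

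By independence across $j$, the vector $(S_{1,n},\dots,S_{m,n})/\sqrt n$ converges jointly to $(Z_1,\dots,Z_m)$ with independent components. The continuous mapping theorem applied to $(z_1,\dots,z_m)\mapsto \max_j|z_j|$ then gives
\begin{align*}
    \sqrt{n}\,D=\max_{j}\tfrac{1}{\sqrt n}|S_{j,n}|\longrightarrow M\coloneq\max_j|Z_j| \quad\text{in distribution.}
\end{align*}

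To pass from convergence in distribution to convergence of expectations, I need uniform integrability of $\sqrt n D$. This is the only real technical step. I would handle it by the bound $(\sqrt n D)^2\leq \sum_{j=1}^m |S_{j,n}|^2/n$, whose expectation equals $m$ for every $n$. Bounded second moments imply uniform integrability, so $\mathbb{E}[\sqrt nD]\to\mathbb{E}[M]$. In fact we only need $\limsup\leq$, which even follows from Fatou applied in the form $\mathbb{E}[\min(\sqrt nD,K)]\to\mathbb{E}[\min(M,K)]$ combined with the tail bound $\mathbb{E}[\sqrt nD\,\mathbf 1_{\sqrt nD>K}]\leq m/K$.

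Finally, Lemma~\ref{lem:max of Rayleigh} with $b=1/\sqrt2$ gives
\begin{align*}
    \mathbb{E}[M]\leq \tfrac{1}{\sqrt2}\big(\sqrt{2\log m}+\sqrt{2\pi}\big)=\sqrt{\log m}+\sqrt{\pi},
\end{align*}
which is the claimed bound. The main point to verify carefully is the variance identification: $\mathbb{E}[(\operatorname{Re}X)^2]=\mathbb{E}[(\operatorname{Im}X)^2]=1/2$ and $\mathbb{E}[\operatorname{Re}X\operatorname{Im}X]=0$, both of which follow from $\mathbb{E}[X^2]=0$ together with $\mathbb{E}|X|^2=1$.
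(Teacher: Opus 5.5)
Your proof is correct and follows the same route as the paper. It uses the multivariate CLT with covariance $\tfrac12 I_2$ (where $r\geq 3$ gives $\mathbb{E}[X^2]=0$), the Rayleigh limit $R(1/\sqrt2)$ for each $|S_{j,n}|/\sqrt n$, continuous mapping for the maximum, and then Lemma~\ref{lem:max of Rayleigh} with $b=1/\sqrt2$. Your uniform-integrability step via $\mathbb{E}[(\sqrt n D)^2]\leq m$ fills a point the paper's proof passes over, namely getting from convergence in distribution to control of the expectations; note only that the truncation argument you sketch at the end is not Fatou's lemma, which would give the reverse inequality $\mathbb{E}[M]\leq\liminf_n\mathbb{E}[\sqrt n D]$.
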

\begin{proof}
    For now fix $j$. Note that for each $i$ we have $\mathbb{E}[X_{j,i}]=0$ and $\operatorname{Var}(\operatorname{Re} X_{j,i})=\operatorname{Var}(\operatorname{Im} X_{j,i})=\frac{1}{2}$ and $\operatorname{Cov}(\operatorname{Re} X_{j,i},\operatorname{Im} X_{j,i})=0$. Denote by $\bar{X}_{j,n}=(X_{j,1}+....+X_{j,n})/n$. Note that $\mathbb{E}[\bar{X}_{j,n}]=0$.

    Then the multidimensional central limit theorem (cf., e.g., \cite{VanDerVaart2000}*{Ex. 2.1}) yields
    \begin{align*}
        \sqrt{n}\bar{X}_{j,n}\overset{d}{\longrightarrow} \mathcal{N}(0,\tfrac{1}{2}I_2)
    \end{align*}
    with $\mathcal{N}(0,\tfrac{1}{2}I_2)$ being the multivariate normal distribution with respect to the matrix $\tfrac{1}{2}I_2$ centered at $0$ and the limit is taken in the distributional sense. 

    The norm in $\CC=\RR^2$ transforms the normal distribution $\mathcal{N}(0,\frac{1}{2}I_2)$ into a Rayleigh distribution $R$ with parameter $1/\sqrt{2}$. 

    Since each the distributions $|\sqrt{n}X_{j,n}|$ are independent and identically distributed, and converge to $R$ we deduce from Lemma~\ref{lem:max of Rayleigh} in combination with the Continuous Mapping Theorem \cite{VanDerVaart2000}*{Thm. 2.3}
    \begin{align*}
        \limsup_{n\to \infty}\sqrt{n}\mathbb{E}[D]\leq   \sqrt{\log m}+\sqrt{\pi}.
    \end{align*}
\end{proof}
The following proposition is an easy consequence of Proposition~\ref{prop:Expected value a_max}.
\begin{prop}
\label{prop:Consequence of Heuristic}
    Under Heuristic~\ref{heu:matrices uniformly distributed} for any finite set $\mathcal{A}$ of nontrivial linear forms of $\nabla_{p,d}$ one has
    \begin{align*}
        \limsup_{x\to \infty}D_{p,d,\mathcal{A}}(x)\cdot |\mathcal{P}_{p,d}(x)|^{1/2}\leq \sqrt{\log |\mathcal{A}|}+\sqrt{\pi}.
    \end{align*}
\end{prop}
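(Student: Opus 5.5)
The idea is to reduce the statement to Proposition~\ref{prop:Expected value a_max} with $r=p$, where Heuristic~\ref{heu:matrices uniformly distributed} is used to model the character sums as sums of independent uniform roots of unity. Fix a finite set $\mathcal{A}=\{\alpha_1,\dots,\alpha_m\}$ of nontrivial linear forms on $\nabla_{p,d}$.

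First I record the exact distribution of each value. If $M$ is uniformly distributed on $\nabla_{p,d}$, then for each nonzero linear form $\alpha$ the value $\alpha(M)$ is uniform on $\FF_p$, since $\alpha$ is surjective and all fibres are cosets of $\ker\alpha$. Hence $\zeta_p^{\alpha(M)}$ is uniform on $\mu_p$. Since $p$ is odd, $r=p\geq 3$, which is exactly the hypothesis of Proposition~\ref{prop:Expected value a_max}.

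Next I set up the model. I write $n\coloneq|\mathcal{P}_{p,d}(x)|$, which tends to $\infty$ as $x\to\infty$ by Dirichlet's theorem. Heuristic~\ref{heu:matrices uniformly distributed} says the empirical distribution of $\lambda(\mathbf q)$ over $\mathbf q\in\mathcal{P}_{p,d}(x)$ tends to the uniform distribution. The probabilistic model implicit in the heuristic treats the $\lambda(\mathbf q)$ as $n$ independent uniform samples of $\nabla_{p,d}$. For each fixed $j$, the summands $X_{j,i}=\zeta_p^{\alpha_j(\lambda(\mathbf q_i))}$ are then i.i.d.\ uniform on $\mu_p$. In this model $D_{p,d,\mathcal{A}}(x)$ is exactly the random variable $D$ of Proposition~\ref{prop:Expected value a_max}.

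Applying that proposition with $m=|\mathcal{A}|$ gives $\limsup_n \sqrt{n}\,\mathbb{E}[D]\leq \sqrt{\log|\mathcal{A}|}+\sqrt{\pi}$. This is the claimed bound, interpreted as a statement about the expected size of the normalised discrepancy predicted by the heuristic.

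The main obstacle is independence across $j$. Proposition~\ref{prop:Expected value a_max} and Lemma~\ref{lem:max of Rayleigh} are stated for independent rows, whereas here all $\alpha_j$ are evaluated on the same samples, so $X_{j,i}$ and $X_{j',i}$ are correlated. I would avoid the issue, because the exponential-moment bound for a maximum does not need independence. One has $\mathbb{E}[\max_j Y_j]\leq \tfrac1t\log\sum_j\mathbb{E}[e^{tY_j}]$, which uses only the union over the $m$ marginals. Each marginal $|\sqrt n\,\bar X_{j,n}|$ still converges to a Rayleigh variable $R(1/\sqrt2)$ by the central limit theorem argument in the proof of Proposition~\ref{prop:Expected value a_max}. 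So I would note that Lemma~\ref{lem:max of Rayleigh}, and hence Proposition~\ref{prop:Expected value a_max}, holds verbatim for identically distributed but dependent $X_j$.

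One remaining technical point is passing the limit inside the expectation, i.e.\ uniform integrability of $e^{t|\sqrt n\bar X_{j,n}|}$. This follows from Hoeffding's inequality for bounded summands, which gives uniformly sub-Gaussian tails.
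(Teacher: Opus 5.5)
Your proposal follows the same route as the paper, which simply records the statement as an easy consequence of Proposition~\ref{prop:Expected value a_max} with $r=p$, $m=|\mathcal{A}|$ and $n=|\mathcal{P}_{p,d}(x)|$. It is understood, as you say, as a bound on the expected normalised discrepancy in the model where the $\lambda(\mathbf q)$ are independent uniform samples of $\nabla_{p,d}$. Your additional points are correct and fill in details the paper leaves implicit: the sums for different $\alpha_j$ are evaluated on the same samples and so are not independent, the exponential-moment bound behind Lemma~\ref{lem:max of Rayleigh} uses only the marginals, and uniform integrability is needed to pass the limit through the expectation.
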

Figure~\ref{fig:Uniformity of linking matrices experiment} presents data from numerical experiments. For practical reasons, the set $\mathcal{P}_{p,d}(x)$ is given as a list of tuples using the lexicographical order on the $d$-tuples of primes indexed by a natural number $N$. The plot shows the function $D_{p,d,\mathcal{A}}(N)$, which coincides with the above defined function $D_{p,d,\mathcal{A}}(x)$, but we sum over the first $N$ tuples in the list. 

The horizontal axis is scaled logarithmically and shows $N$. The dotted black line is given by $(\sqrt{\log|\mathcal{A}|}+\sqrt{\pi})/\sqrt{N}$, which is the upper bound from Proposition~\ref{prop:Consequence of Heuristic} for the predicted expectation and the black line is given by $\sqrt{\log |\mathcal{A}|}/\sqrt{N}$, which seems to approximate the curves best. 

In all experiments presented here we have $m=|\mathcal{A}|=30$ and the linear forms have been chosen randomly in $\nabla_{p,d}^\vee\setminus \{0\}$. We conducted further experiments with other values of $m$ and did not notice any dependence. 
\begin{figure}[h]
    \centering
    \includegraphics[width=0.9\linewidth]{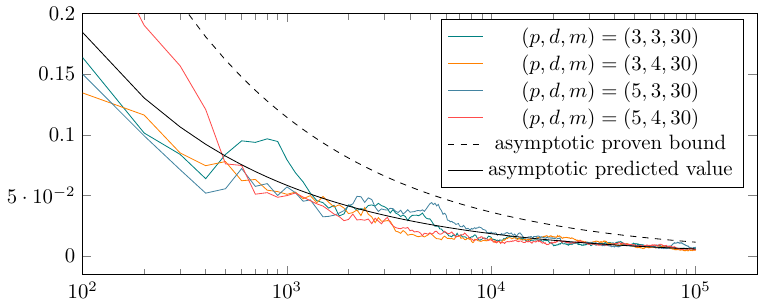}
    \caption{Empirical values of $D_{p,d,\mathcal{A}}(N)$ together with the predicted expectation $\sqrt{\log m}/\sqrt{N}$ and the upper bound from Proposition~\ref{prop:Consequence of Heuristic}}
    \label{fig:Uniformity of linking matrices experiment}
\end{figure}
In all tested cases, the observed values remain well below the theoretical upper bound and fluctuate around the predicted expectation. No systematic dependence on $p$ or $d$ is visible within the tested range, supporting the claim of Heuristic~\ref{heu:matrices uniformly distributed}.
\subsection{Validity of the uFM property for \texorpdfstring{$G_{K,S}$}{GKS} based on the technique in Section~\ref{ssec:FAb groups with three generators}}
\label{ssec:Statistics on uFM property}
We investigate how often the criterion of Section~\ref{ssec:FAb groups with three generators} succeeds in verifying the uFM property. Heuristic~\ref{heu:matrices uniformly distributed} suggests that the case where $\rk(L)=3$ (for $L$ constructed as in \eqref{eq:combinatorial data}) should happen in the overwhelming majority of cases  --- especially for large $p$. In this situation we have the strongest restrictions. Furthermore, we expect that the conditions given by Proposition~\ref{prop:rkL = 3} \ref{it:rkL = 3 a} are rarely satisfied, as the product of a random (invertible) matrix in $\nabla_{3,p}$ with a random (invertible) diagonal matrix is almost never symmetric. Thus Heuristic~\ref{heu:matrices uniformly distributed} implies the following corollary.
\begin{cor}
\label{cor:uFM always satisfied}
    Assume that Heuristic~\ref{heu:matrices uniformly distributed} is true. Then, the proportion of triples $S=\{q_1,q_2,q_3\}$ for which the method described in Section~\ref{ssec:FAb groups with three generators} yields the validity of the uFM property tends to $1$ as $p\to \infty$.
\end{cor}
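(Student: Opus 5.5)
The plan is to reduce the statement to two probabilistic claims about the linking matrix $L$ and the diagonal matrix $A$. The first is that $\rk L=3$ with probability tending to $1$. The second is that, conditional on $\rk L=3$, the case $(v_1,v_2,v_3)=(0,0,0)$ satisfies Proposition~\ref{prop:rkL = 3} (\ref{it:rkL = 3 a}) with probability tending to $0$. One then reads off the verdict from Table~\ref{tab:rk3 Algorithm}. When $\rk L=3$, the rows with patterns $(0,0,*)$ and $(0,*,*)$ give $\lightning$ directly. The pattern $(0,0,0)$ gives $\lightning$ unless $A_p^{-1}L$ is symmetric. The pattern $(*,*,*)$ is treated by dividing by $p^{v_1}$, which returns us to one of the other patterns with the same $L$. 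So if $\rk L=3$ and $A_p^{-1}L$ (after rescaling) is not symmetric, the method always certifies the uFM property. For $K=\QQ$ with $|S|=3$ we have $\B_S=\B_\emptyset=0$, so Koch's theorem shows $G_S$ is presented by $\Lambda_{\QQ,S}$, and the method indeed applies to every triple.

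For the first claim I would invoke Heuristic~\ref{heu:matrices uniformly distributed} with $d=3$. The matrix $L$ in \eqref{eq:combinatorial data} is obtained from $\lambda(q_1,q_2,q_3)\in\nabla_{p,3}$ by a fixed bijective linear rearrangement of its six off-diagonal entries, up to signs. Hence $L$ is asymptotically equidistributed among matrices with zero diagonal, i.e.\ in $\nabla_{p,3}$ up to relabeling. The locus $\det=0$ in this $6$-dimensional $\FF_p$-space is a nonzero hypersurface, since the determinant is the polynomial $-\ell_{13}\ell_{21}\ell_{32}+\ell_{12}\ell_{23}\ell_{31}$ up to sign. By Schwartz--Zippel this locus has at most $3p^5$ points. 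So the limiting proportion of singular $L$ is $O(1/p)$.

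For the second claim, the condition that $A_p^{-1}L$ is symmetric imposes $L_{ij}/a_i=L_{ji}/a_j$ for the three pairs $i<j$. Fix any $A$; the equidistribution of $L$ is uniform over the choices of primes, and choosing the diagonal $a_i$ only restricts the $q_i$ modulo $p^2$, which I expect to be independent of the linking data. Under this assumption, for each $A$ the symmetry condition cuts out a codimension-$3$ linear subspace of $\nabla_{p,3}$, so it has density $p^{-3}$. In the case $(*,*,*)$ the rescaled matrix $A/p^{v_1}$ plays the same role.

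Summing the two estimates, the failure proportion is at most $O(1/p)+p^{-3}$, which tends to $0$ as $p\to\infty$. The main obstacle is justifying that the valuations and residues of $(q_i-1)/p$ are asymptotically independent of $\lambda$. The heuristic as literally stated concerns only $\lambda$. My first attempt would be to bound the failure set independently of $A$: for each of the finitely many residue classes of $A_p$, take the union over those classes of the singular-or-symmetric loci. This union is still $O(p^{5})$ points, so no independence is needed beyond equidistribution of $L$ itself.
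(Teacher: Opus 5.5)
Your argument is correct and uses the same two steps as the paper. Under the heuristic $L$ is invertible with probability tending to $1$. In the rank-$3$ row of Table~\ref{tab:rk3 Algorithm}, the method can fail only when $A_p^{-1}L$ (after rescaling by $p^{v_1}$) is symmetric, and this is rare.

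The paper only sketches both steps. For the second it appeals to the product of a random invertible $L$ with a random invertible diagonal matrix, so it tacitly treats $A$ as random and independent of $L$. The heuristic, which concerns $\lambda$ alone, does not provide this. Your fallback union bound is the better argument: it makes the failure event depend on $L$ only, and it yields an explicit bound of roughly $3/p+1/p$.

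You should justify the count in that fallback, though. A naive union over the $(p-1)^3$ possible $A_p$ of the $p^3$-element subspaces $\{L : A_p^{-1}L \text{ symmetric}\}$ gives about $p^6$ points, which proves nothing. What saves you is that the condition is unchanged when $A_p$ is replaced by $cA_p$ with $c\in\FF_p^\times$. Hence there are at most $(p-1)^2$ distinct subspaces, and their union has fewer than $p^5$ elements. Equivalently, when all $\ell_{ij}\neq 0$, such an $A_p$ exists only if $\ell_{12}\ell_{23}\ell_{31}+\ell_{13}\ell_{21}\ell_{32}=0$, which is again a hypersurface.
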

Tables~\ref{tab:uFM p=3} to \ref{tab:uFM p=7} show the results of numerical experiments for $p=3,5$ and $7$. In the column labeled \emph{total} we denote the number $|\mathcal{P}_{3,p}(x)|$ and in the subsequent columns the number of such triples $(q_1,q_2,q_3)$ for which the matrix $L$ constructed from the linking numbers as in \eqref{eq:combinatorial data}. Below we indicate the number of cases, where we can verify that $G_S$ has the uFM property.
\begin{table}[h]
    \centering
    \begin{tabular}{c||c|c|c|c|c}
         & total & $\rk L=3$ & $\rk L=2$ & $\rk L=1$ & $\rk L=0$\\
         \hline \hline
        total & $1456935$ & $688692$ & $701321$ & $65342$ & $1580$\\
        \hline
        \multirow{2}{*}{verified} & $1409890$ & $683761$ & $665420$ & $59216$ & $1493$\\
        & \scriptsize ($96.8\%$)& \scriptsize ($99.3\%$) & \scriptsize ($94.9\%$)&\scriptsize($90.6\%$) & \scriptsize($94.5\%$)
    \end{tabular}
    \caption{Verification rates of the criterion from Section~\ref{ssec:FAb groups with three generators} for $p=3$ and $x=3000$.}
    \label{tab:uFM p=3}
\end{table}
\begin{table}[h]
    \centering
    \begin{tabular}{c||c|c|c|c|c}
         & total & $\rk L=3$ & $\rk L=2$ & $\rk L=1$ & $\rk L=0$\\
         \hline \hline
        total & $708561$ & $496457$ & $206942$ & $5112$ & $50$\\
        \hline
        \multirow{2}{*}{verified} & $704380$ & $494952$ & $204376$ & $5002$ & $50$\\
        & \scriptsize ($99.4\%$)& \scriptsize ($99.7\%$) & \scriptsize ($98.8\%$)&\scriptsize($97.8\%$) & \scriptsize($100.0\%$)
    \end{tabular}
    \caption{Verification rates of the criterion from Section~\ref{ssec:FAb groups with three generators} for $p=5$ and $x=5000$.}
    \label{tab:uFM p=5}
\end{table}
\begin{table}[h]
    \centering
    \begin{tabular}{c||c|c|c|c|c}
         & total & $\rk L=3$ & $\rk L=2$ & $\rk L=1$ & $\rk L=0$\\
         \hline \hline
        total & $508080$ & $407374$ & $99813$ & $889$ & $4$\\
        \hline
        \multirow{2}{*}{verified} & $507002$ & $406832$ & $99284$ & $882$ & $4$\\
        & \scriptsize ($99.8\%$)& \scriptsize ($99.9\%$) & \scriptsize ($99.5\%$)&\scriptsize($99.2\%$) & \scriptsize($100.0\%$)
    \end{tabular}
    \caption{Verification rates of the criterion from Section~\ref{ssec:FAb groups with three generators} for $p=7$ and $x=7000$.}
    \label{tab:uFM p=7}
\end{table}
In the experiments shown here, the verification rate exceeds $96\%$ and increases from $96.8\%$ for $p=3$ to $99.8\%$ for $p=7$. This strongly suggests that the criterion of Section~\ref{ssec:FAb groups with three generators} captures the vast majority of tame Fontaine–-Mazur groups arising from triples of primes.

Taken together, the experiments provide evidence for two complementary phenomena. First, linking matrices appear to behave as random matrices over $\FF_p$ supporting Heuristic \ref{heu:matrices uniformly distributed}. Second, under this heuristic the criterion developed in Section~\ref{ssec:FAb groups with three generators} verifies the uFM property for an overwhelming proportion of examples of triples of tame primes, as implied by Corollary~\ref{cor:uFM always satisfied}. 

%% file: appendix.tex
\section{Proofs of Propositions~\ref{prop:rkL = 3}, \ref{prop:rkL = 2}, and \ref{prop:rkL = 1}}
\label{sec:Appendix with proofs}
In this appendix we give the proofs to verify that the method presented in Section~\ref{ssec:FAb groups with three generators} is indeed correct. 
\begin{repprop}{prop:rkL = 3}
    If $\rk L=3$ and there exists a nontrivial uniform quotient of $G$, then either
    \begin{enumerate}[(a)]
        \item \label{it:app rkL = 3 a} each $v_i$ is $0$ and $A_p^{-1}L$ is symmetric or
        \item \label{it:app rkL = 3 b} each $v_i> 0$ and there is a three-dimensional FAb $\ZZ_p$-Lie algebra $\widetilde{\Lie}$ such that $\widetilde{\Lie}_p$ satisfies the relations (\ref{eq:relations in three dim Lie algebra}) with $a_i$ replaced by $a_i/p^{v_1}$.
    \end{enumerate}  
\end{repprop}
\begin{proof}
    We first show, that in this situation it cannot happen that there are some $v_i$ that are $0$ and others that are non-zero. We assume that $v_1\leq v_2\leq v_3$. If $v_1=0$, but $v_2\neq 0$, then $\Lie^\flat_p=\langle e_1\rangle$ and hence by Corollary~\ref{cor:Mod p reduction of FAb LieAlgebras dim3} $e_1$ is contained in the center of $\Lie^\flat_p$. A contradiction to $e_1\equiv \dbb{e_1,\ell_{12}e_2+\ell_{13}e_3}$. In the case that $v_1=v_2=0$ and $v_3>0$ we see that $\Lie^\flat_p$ is two dimension. Hence by \cite{Jacobson1962}*{I.4 (d)} it follows that $\dbb{e_1,e_2}\equiv 0\pmod p$. Hence
    \begin{align*}
        e_1\equiv \ell_{13}f_1\pmod p\quad\text{and}\quad e_2\equiv \ell_{23}f_2\pmod p
    \end{align*}
    Hence $f_1$ and $f_2$ are linearly independent in $\Lie_p^\flat$. Thus the relation $0\equiv \dbb{e_3,\ell_{31}e_1+\ell_{32}e_2}\equiv -\ell_{31}f_1+\ell_{32}f_2$ implies $L_{3-}=0$. A contradiction to the invertibility of $L$.

    If each $v_i=0$, then  $\Lie_p$ is perfect and by \cite{Jacobson1962}*{I.4 (e)} the matrix $B=(B_{ij})\in \GL_3(\FF_p)$ with $f_i=\sum_{i=1}^3B_{ij}\overline{e}_j$ for all $i=1,2,3$ is symmetric by the Jacobi identity in $\Lie_p$. It is easy to see that $B=L^{-1}A_p$. As the inverse of a symmetric matrix is symmetric, it follows that $A_p^{-1}L$ is symmetric too.
    
    Now assume that each $v_i>0$. We show that $\dbb{\Lie^\flat,\Lie^\flat}\subseteq p^{r}\Lie^\flat$ for each $r\leq v_1$. For $r=1$ this follows from the fact that $\Lie_p^\flat$ is abelian (since $L$ is invertible, it follows from \eqref{eq:relations in three dim Lie algebra} that $f_i=0$ for each $i$). Now assume that it is known for some $r<v_1$. Then we use Proposition~\ref{prop:First r Bocksteins vanish} to deduce the same relations with $a_j$ replaced by $a_j/p^{r}$. If $r+1<v_1$ the left side of the congruences is still $0$ mod $p$ and hence $\Lie^{r\flat}_p$ still abelian and hence $\dbb{\Lie^\flat,\Lie^\flat}\subseteq p^{r+1}\Lie^\flat$. If $r+1=v_1$ then we recover the claimed relations.
\end{proof}
\begin{repprop}{prop:rkL = 2}
    If $\rk L=2$ and there exists a nontrivial uniform quotient of $G$, then either
    \begin{enumerate}[(a)]
        \item \label{it:app rkL = 2 a} $v_1=v_2=0$, $v_3> 0$ and the rows $L_{1-}$, $L_{2-}$ are linearly independent, $L_{3-}=0$, and both $\ell_{13}$ and $\ell_{23}$ are non-zero.
        \item \label{it:app rkL = 2 b} $v_1=0$, $v_2,v_3> 0$ and $L_{1-}$ is non-zero, $L_{2-}$ and $L_{3-}$ are linearly dependent, and $L_{21}=L_{31}=0$.
        \item \label{it:app rkL = 2 c} $v_1,v_2,v_3> 0$. In this case let $0\neq \mu=(\mu_1,\mu_2,\mu_3)\in \FF_p^3$ such that $\mu L=0$ and set
        \begin{align*}
            M\coloneq\medmatrix{
                0&(a_3/p) \mu_3&-(a_2/p) \mu_2\\
                -(a_3/p) \mu_3&0&(a_1/p) \mu_1\\
                (a_2/p) \mu_2&-(a_1/p) \mu_1&0
            }
            \in \FF_p^{3\times 3}\quad \text{and}\quad \widehat{L}\coloneq \medmatrix{
                L\\
                M
            }
        \end{align*}
        If the matrix $\widehat{L}$ has rank $3$, then there exists a three-dimensional FAb $\ZZ_p$-Lie algebra $\widetilde{\Lie}$ such that $\widetilde{\Lie}_p$ satisfies the relations (\ref{eq:relations in three dim Lie algebra}) with $a_i$ replaced by $a_i/p$.
    \end{enumerate}  
\end{repprop}
\begin{proof}
    By Lemma~\ref{lem:rank and valuation relation} we know that $v_3>0$. In particular we only have to consider the given three cases. 
    \begin{itemize}
        \item[\eqref{it:app rkL = 2 a}] There exist $\lambda_1,\lambda_2$ such that $L_{3-}=\lambda_1L_{1-}+\lambda_2L_{2-}$. Applying \eqref{eq:relations in three dim Lie algebra} yields $\lambda_1a_1e_1+\lambda_2a_2e_2\equiv 0\pmod p$ and thus $\lambda_1=\lambda_2=0$ by the linear independence of $e_1$ and $e_2$. Hence also $L_{3-}=0$.

        Since $e_1,e_2\in (\Lie_p^\flat)'$, $(\Lie_p^\flat)'$ has either dimension $2$ or $3$. In the first case $e_1$ and $e_2$ commute by \cite{Jacobson1962}*{I.4 (d)}. This is only possible if $l_{13}$ and $l_{23}$ are non-zero. Thus we are left with the case, where $\Lie_p^\flat$ is perfect. In that case $e_3\equiv w_1f_1+w_2f_2+w_3f_3\pmod p$ for suitable $w_i\in \FF_p$. Thus we get
        \begin{align*}
            \medmatrix{
            0&-\ell_{13}&\ell_{12}\\
            \ell_{23}&0&-\ell_{21}\\
            w_1&w_2&w_3
        }
        \medmatrix{
            f_1\\ f_2\\ f_3
        }
        =\medmatrix{
            a_1e_1\\
            a_2e_2\\
            e_3
        }
        \end{align*}
        By \cite{Jacobson1962}*{I.4 (e)} the Jacobi identity in $\mathfrak{g}_p^\flat$ implies that
        \begin{align*}
            \medmatrix{
            a_1&0&0\\
            0&a_2&0\\
            0&0&1
        }^{-1}
        \medmatrix{
            0&-\ell_{13}&\ell_{12}\\
            \ell_{23}&0&-\ell_{21}\\
            w_1&w_2&w_3
        }
        =\medmatrix{
            0&-\ell_{13}a_1^{-1}&\ell_{12}a_1^{-1}\\
            \ell_{23}a_2^{-1}&0&-\ell_{21}a_2^{-1}\\
            w_1&w_2&w_3
        }
        \end{align*}
        is symmetric. Hence $-\ell_{13}a_2\equiv \ell_{23}a_1\pmod p$ and this value is non-zero implying that $\ell_{13}$ and $\ell_{23}$ are both non-zero, as this matrix is invertible by construction.
        \item[\eqref{it:app rkL = 2 b}] Clearly the row $L_{1-}$ non-zero and by a similar argument as before linearly independent to $L_{2-}$ and $L_{3-}$. Thus, $L_{2-}$ and $L_{3-}$ are linearly dependent.

        If $(\Lie_p^\flat)'$ would have dimension $1$ it would be generated by $e_1$ and hence $e_1$ central modulo $p$ by Corollary~\ref{cor:Mod p reduction of FAb LieAlgebras dim3} \eqref{it:one dimensional derived implies central}. A contradiction to $e_1\equiv \dbb{e_1,\ell_{12}e_2+\ell_{13}e_3}\pmod p$. Thus $(\Lie_p^\flat)'$ has to have dimension $2$. Now pick $0\neq w=w_2e_2+w_3e_3\in (\Lie_p^\flat)'$. Then  $0=\dbb{e_1,w}=\alpha_2 f_3-\alpha_3f_2$ by \cite{Jacobson1962}*{I.4 (d)}. Let
         \begin{align*}
             \lambda=v_1(e_2\wedge e_3)+v_2(e_3\wedge e_1)+v_3(e_1\wedge e_2)\in \Lambda^2(\Lie_p)
         \end{align*}
         such that $\dbb{\lambda}=v_1f_1+v_2f_2+v_3f_3=w$. Now consider the matrix $L'$ defined by
         \begin{align*}
             L'=\medmatrix{
                 L\\
                 v
             }
             \in \FF_p^{4\times 3}
         \end{align*}
         with $v=(v_1,v_2,v_3)$. Then the rank of $L'$ is full and there are $\lambda_1,\lambda_2,\lambda_3,\lambda_4$ such that $(\lambda_1,...,\lambda_4)L'=(0,-\alpha_3,\alpha_2)$. By multiplication from the right with $(f_1,f_2,f_3)^T$ we deduce that $\lambda_1=\lambda_4=0$ and therefore
        \begin{align*}
            \begin{pmatrix}
                \lambda_2,\lambda_3
            \end{pmatrix}
            \begin{pmatrix}
                L_{21}&0&L_{23}\\
                L_{31}&L_{32}&0
            \end{pmatrix}
            =(0,-\alpha_3,\alpha_2).
        \end{align*}
        Since the rows are linearly dependent, so are the columns and thus if the first column is non-zero, then $\alpha_3$ and $\alpha_2$ are $0$. Contradicting, that $w$ is nontrivial.
        \item[\eqref{it:app rkL = 2 c}] If $(\Lie_p^\flat)'=0$, then we can apply the same argument as in the proof of Proposition~\ref{prop:rkL = 3} \ref{it:rkL = 3 b} and descend by a power of $p$. Thus it remains to show, that the assumptions imply that $(\Lie_p^\flat)'=0$. 

        Since $\rk L=2$, the dimension of $(\Lie_p^\flat)'$ can be at most $1$. Assume it is one. Let $w$ be a generator of $(\Lie_p^\flat)'$. Then by Proposition~\ref{prop:FAb Lie algebra dim3} \eqref{it:one dimensional derived implies central} we conclude that 
        \begin{align*}
            \sum_{i=1}^3 \widehat{\mu}_ia_ie_i\equiv 0\pmod{p^2,p(\Lie^\flat)'}
        \end{align*}
        For any lift of the $\mu_i$ to $\ZZ_p$. Thus, $\sum_{i=1}^3 (a_i/p)\cdot \mu_i \cdot \overline{e}_i\in (\Lie_p^\flat)'=\langle w\rangle$. If this element is zero, then $(a_i/p)\mu_i\equiv 0\pmod p$ and thus $M=0$ a contradiction to our assumption. Without loss of generality we can assume $w=\sum_{i=1}^3 (a_i/p)\cdot \mu_i \cdot \overline{e}_i$.
        
        By Corollary~\ref{cor:Mod p reduction of FAb LieAlgebras dim3} \eqref{it:one dimensional derived implies central} $w$ has to be central. Thus, we get three additional equations by $\dbb{e_i,w}=0$ for $i=1,2,3$, which yield exactly the rows of $M$. By assumption the rank of the block matrix $\widehat{L}$ is $3$ and therefore as trivial right-kernel, showing that $f_1=f_2=f_3=0$. A contradiction to the assumption that $(\Lie_p^\flat)'$ has dimension $1$.
    \end{itemize}
\end{proof}
\begin{repprop}{prop:rkL = 1}
    If $\rk L=1$ and there exists a uniform quotient of $G$, then either
    \begin{enumerate}[(a)]
        \item $v_1=0$, $v_2,v_3>0$ and the rows $L_{2-}$, $L_{3-}$ are $0$ or 
        \item  each $v_i>0$.
    \end{enumerate}  
\end{repprop}
\begin{proof}
    This is a direct consequence of Lemma~\ref{lem:rank and valuation relation}.
\end{proof}

%% file: references.bib
@misc{stacks-project,
  author       = {The {Stacks project authors}},
  title        = {The {S}tacks project},
  howpublished = {\url{https://stacks.math.columbia.edu}},
  year         = {2026},
}

@article{HuberKingsNaumann2011,
 author = {Huber, A. and Kings, G. and Naumann, N.},
 title = {Some complements to the {Lazard} isomorphism},
 fjournal = {Compositio Mathematica},
 journal = {Compos. Math.},
 issn = {0010-437X},
 volume = {147},
 number = {1},
 pages = {235--262},
 year = {2011},
 language = {English},
 doi = {10.1112/S0010437X10004884},
 zbMATH = {5854934},
 Zbl = {1268.20051}
}

@book{NSW2000,
 author = {Neukirch, J. and Schmidt, A. and Wingberg, K.},
 title = {Cohomology of number fields},
 edition = {2nd ed.},
 fseries = {Grundlehren der Mathematischen Wissenschaften},
 series = {Grundlehren Math. Wiss.},
 issn = {0072-7830},
 volume = {323},
 isbn = {978-3-540-37888-4},
 year = {2008},
 publisher = {Berlin: Springer},
 language = {English},
 zbMATH = {5162349},
 Zbl = {1136.11001}
}

@book{Jacobson1962,
 author = {Jacobson, N.},
 title = {Lie algebras},
 fseries = {Interscience Tracts in Pure and Applied Mathematics},
 series = {Intersci. Tracts Pure Appl. Math.},
 volume = {10},
 year = {1962},
 publisher = {Interscience Publishers, New York, NY},
 language = {English},
 zbMATH = {3196329},
 Zbl = {0121.27504}
}

@article{Labute2006,
 author = {Labute, J.},
 title = {Mild pro-{$p$}-groups and {Galois} groups of {$p$}-extensions of {$\mathbb{Q}$}},
 fjournal = {Journal f{\"u}r die Reine und Angewandte Mathematik},
 journal = {J. Reine Angew. Math.},
 issn = {0075-4102},
 volume = {596},
 pages = {155--182},
 year = {2006},
 language = {English},
 doi = {10.1515/CRELLE.2006.058},
 zbMATH = {5080511},
 Zbl = {1122.11076}
}

@article{Labute2014,
 author = {Labute, J.},
 title = {Linking numbers and the tame {Fontaine}-{Mazur} conjecture},
 fjournal = {Annales Math{\'e}matiques du Qu{\'e}bec},
 journal = {Ann. Math. Qu{\'e}.},
 issn = {2195-4755},
 volume = {38},
 number = {1},
 pages = {61--71},
 year = {2014},
 language = {English},
 doi = {10.1007/s40316-014-0012-4},
 zbMATH = {6346468},
 Zbl = {1314.11067}
}

@book{McCeary2001,
 author = {McCleary, J.},
 title = {A user's guide to spectral sequences.},
 edition = {2nd ed.},
 fseries = {Cambridge Studies in Advanced Mathematics},
 series = {Camb. Stud. Adv. Math.},
 volume = {58},
 isbn = {0-521-56759-9},
 year = {2001},
 publisher = {Cambridge: Cambridge University Press},
 language = {English},
 zbMATH = {1565334},
 Zbl = {0959.55001}
}

@article{Tate1976,
 author = {Tate, J.},
 title = {Relations between {{\(K_2\)}} and {Galois} cohomology},
 fjournal = {Inventiones Mathematicae},
 journal = {Invent. Math.},
 issn = {0020-9910},
 volume = {36},
 pages = {257--274},
 year = {1976},
 language = {English},
 doi = {10.1007/BF01390012},
 url = {https://eudml.org/doc/142421},
 zbMATH = {3559703},
 Zbl = {0359.12011}
}

@book{DDMS1999,
 author = {Dixon, J. D. and du Sautoy, M. P. F. and Mann, A. and Segal, D.},
 title = {Analytic pro-{{\(p\)}} groups. {Revised} and enlarged by {Marcus} du {Sautoy} and {Dan} {Segal}.},
 edition = {2nd ed.},
 fseries = {Cambridge Studies in Advanced Mathematics},
 series = {Camb. Stud. Adv. Math.},
 volume = {61},
 isbn = {0-521-65011-9},
 year = {1999},
 publisher = {Cambridge: Cambridge University Press},
 language = {English},
 zbMATH = {1339096},
 Zbl = {0934.20001}
}

@misc{MaireSankara2025,
 author = {Maire, C. and Sankara, K.},
 title = {On {S}-{Split} p-{Hilbert} {Class} {Field} {Towers} with {Prescribed} {Galois} {Groups}},
 year = {2025},
 howpublished = {Preprint, {arXiv}:2508.07946 [math.{NT}] (2025)},
 url = {https://arxiv.org/abs/2508.07946},
 arXiv = {arXiv:2508.07946}
}

@book{Koch2002,
 author = {Koch, H.},
 title = {Galois theory of {{\(p\)}}-extensions. {Transl}. from the {German} by {F}. {Lemmermeyer}},
 fseries = {Springer Monographs in Mathematics},
 series = {Springer Monogr. Math.},
 issn = {1439-7382},
 isbn = {3-540-43629-4},
 year = {2002},
 publisher = {Berlin: Springer},
 language = {English},
 zbMATH = {1798938},
 Zbl = {1023.11002}
}

@article{Massey1953,
 author = {Massey, W. S.},
 title = {Exact couples in algebraic topology. {I}.-{V}},
 fjournal = {Annals of Mathematics. Second Series},
 journal = {Ann. Math. (2)},
 issn = {0003-486X},
 volume = {56},
 pages = {363--396},
 year = {1953},
 language = {English},
 doi = {10.2307/1969805},
 zbMATH = {3077484},
 Zbl = {0049.24002}
}

@book{Weibel1994,
 author = {Weibel, C. A.},
 title = {An introduction to homological algebra},
 fseries = {Cambridge Studies in Advanced Mathematics},
 series = {Camb. Stud. Adv. Math.},
 volume = {38},
 isbn = {0-521-43500-5},
 year = {1994},
 publisher = {Cambridge: Cambridge University Press},
 language = {English},
 zbMATH = {595200},
 Zbl = {0797.18001}
}

@article{Lazard1965,
 author = {Lazard, M.},
 title = {{{\(p\)}}-adic analytic groups},
 fjournal = {Publications Math{\'e}matiques},
 journal = {Publ. Math., Inst. Hautes {\'E}tud. Sci.},
 issn = {0073-8301},
 volume = {26},
 pages = {389--603},
 year = {1965},
 language = {French},
 url = {https://eudml.org/doc/103856},
 zbMATH = {3225421},
 Zbl = {0139.02302}
}

@incollection{FontaineMazur1995,
 author = {Fontaine, J-M. and Mazur, B.},
 title = {Geometric {Galois} representations},
 booktitle = {Elliptic curves, modular forms, \& Fermat's last theorem. Proceedings ot the conference on elliptic curves and modular forms held at the Chinese University of Hong Kong, December 18-21, 1993},
 isbn = {1-57146-026-8},
 pages = {41--78},
 year = {1995},
 publisher = {Cambridge, MA: International Press},
 language = {English},
 zbMATH = {824714},
 Zbl = {0839.14011}
}

@article{HajirMaire2022,
 author = {Hajir, F. and Maire, C.},
 title = {Analytic {Lie} extensions of number fields with cyclic fixed points and tame ramification},
 fjournal = {Journal of the Ramanujan Mathematical Society},
 journal = {J. Ramanujan Math. Soc.},
 issn = {0970-1249},
 volume = {37},
 number = {1},
 pages = {63--85},
 year = {2022},
 language = {English},
 url = {www.mathjournals.org/jrms/2022-037-001/2022-037-001-006.html},
 zbMATH = {7500313},
 Zbl = {1491.11057}
}

@article{Maire2018,
 author = {Maire, C.},
 title = {On the quotients of the maximal unramified 2-extension of a number field},
 fjournal = {Documenta Mathematica},
 journal = {Doc. Math.},
 issn = {1431-0635},
 volume = {23},
 pages = {1263--1290},
 year = {2018},
 language = {English},
 doi = {10.25537/dm.2018v23.1263-1290},
 zbMATH = {6957407},
 Zbl = {1477.11190}
}

@book{OMeara1973,
 author = {O'Meara, O. T.},
 title = {Introduction to quadratic forms. 3rd corrected printing},
 fseries = {Grundlehren der Mathematischen Wissenschaften},
 series = {Grundlehren Math. Wiss.},
 issn = {0072-7830},
 volume = {117},
 year = {1973},
 publisher = {Springer, Cham},
 language = {English},
 zbMATH = {3409446},
 Zbl = {0259.10018}
}

@article{Ramakrishna2008,
 author = {Ramakrishna, R.},
 title = {Constructing {Galois} representations with very large image},
 fjournal = {Canadian Journal of Mathematics},
 journal = {Can. J. Math.},
 issn = {0008-414X},
 volume = {60},
 number = {1},
 pages = {208--221},
 year = {2008},
 language = {English},
 doi = {10.4153/CJM-2008-009-7},
 zbMATH = {5237301},
 Zbl = {1197.11062}
}

@misc{Koch1966,
 author = {Koch, H.},
 title = {l-{Erweiterungen} mit vorgegebenen {Verzweigungsstellen}},
 year = {1966},
 language = {German},
 howpublished = {Algebr. {Zahlentheorie}, {Ber}. {Tagung} {Math}. {Forschinst}. {Oberwolfach} 1964, 139-141 (1966).},
 zbMATH = {3317154},
 Zbl = {0199.09801}
}

@book{Gras2003,
 author = {Gras, G.},
 title = {Class field theory. {From} theory to practice},
 fseries = {Springer Monographs in Mathematics},
 series = {Springer Monogr. Math.},
 issn = {1439-7382},
 isbn = {3-540-44133-6},
 year = {2003},
 publisher = {Berlin: Springer},
 language = {English},
 zbMATH = {1834428},
 Zbl = {1019.11032}
}

@article{GrasMunnier1998,
     author = {G. Gras and A. Munnier},
     title = {Extensions cycliques $T$-totalement ramifi\'ees},
     journal = {Publications math\'ematiques de Besan\c{c}on. Alg\`ebre et th\'eorie des nombres},
     eid = {6},
     pages = {1--17},
     publisher = {Presses universitaires de Franche-Comt\'e},
     year = {1998},
     doi = {10.5802/pmb.a-91},
     language = {fr},
     url = {https://pmb.centre-mersenne.org/articles/10.5802/pmb.a-91/}
}

@article{FeuerpfeilHamzaLim2026,
 author = {J. Feuerpfeil and O. Hamza and D. Lim},
 title = {Tame {Galois} {Groups}, {Linking} {Numbers} and {Mildness}},
 year = {2026},
 howpublished = {Preprint, {arXiv}:2606.01083 [math.{NT}] (2026)},
 url = {https://arxiv.org/abs/2606.01083},
 arXiv = {arXiv:2606.01083}
}

@article{BrowderPakianathan2000,
 author = {Browder, W. and Pakianathan, J.},
 title = {Cohomology of uniformly powerful {{\(p\)}}-groups},
 fjournal = {Transactions of the American Mathematical Society},
 journal = {Trans. Am. Math. Soc.},
 issn = {0002-9947},
 volume = {352},
 number = {6},
 pages = {2659--2688},
 year = {2000},
 language = {English},
 doi = {10.1090/S0002-9947-99-02470-8},
 zbMATH = {1449825},
 Zbl = {0987.20027}
}

@article{Boston1992,
 author = {Boston, N.},
 title = {Some cases of the {Fontaine}-{Mazur} conjecture},
 fjournal = {Journal of Number Theory},
 journal = {J. Number Theory},
 issn = {0022-314X},
 volume = {42},
 number = {3},
 pages = {285--291},
 year = {1992},
 language = {English},
 doi = {10.1016/0022-314X(92)90093-5},
 zbMATH = {98391},
 Zbl = {0768.11044}
}

@article{Boston1999,
 author = {Boston, N.},
 title = {Some cases of the {Fontaine}-{Mazur} conjecture. {II}},
 fjournal = {Journal of Number Theory},
 journal = {J. Number Theory},
 issn = {0022-314X},
 volume = {75},
 number = {2},
 pages = {161--169},
 year = {1999},
 language = {English},
 doi = {10.1006/jnth.1998.2337},
 zbMATH = {1275040},
 Zbl = {0928.11050}
}

@article{GonzaleSanchezKlopsch2009,
 author = {Gonz{\'a}lez-S{\'a}nchez, J. and Klopsch, B.},
 title = {Analytic pro-{{\(p\)}} groups of small dimensions},
 fjournal = {Journal of Group Theory},
 journal = {J. Group Theory},
 issn = {1433-5883},
 volume = {12},
 number = {5},
 pages = {711--734},
 year = {2009},
 language = {English},
 doi = {10.1515/JGT.2009.006},
 zbMATH = {5614575},
 Zbl = {1183.20030}
}

@book{Wainwright2019,
 author = {Wainwright, M. J.},
 title = {High-dimensional statistics. {A} non-asymptotic viewpoint},
 fseries = {Cambridge Series in Statistical and Probabilistic Mathematics},
 series = {Camb. Ser. Stat. Probab. Math.},
 volume = {48},
 isbn = {978-1-108-49802-9; 978-1-108-62777-1},
 year = {2019},
 publisher = {Cambridge: Cambridge University Press},
 language = {English},
 doi = {10.1017/9781108627771},
 zbMATH = {7021501},
 Zbl = {1457.62011}
}

@book{VanDerVaart2000,
 author = {Van der Vaart, A. W.},
 title = {Asymptotic statistics},
 isbn = {0-521-78450-6},
 year = {2000},
 publisher = {Cambridge: Cambridge University Press},
 language = {English},
 doi = {10.1017/CBO9780511802256},
 zbMATH = {1495256},
 Zbl = {0943.62002}
}

@article{VenkovKoch1978,
 author = {Venkov, B. B. and Koch, H.},
 title = {The p-tower of class fields for an imaginary quadratic field},
 fjournal = {Journal of Soviet Mathematics},
 journal = {J. Sov. Math.},
 issn = {0090-4104},
 volume = {9},
 pages = {291--299},
 year = {1978},
 language = {English},
 doi = {10.1007/BF01085047},
 zbMATH = {3614892},
 Zbl = {0396.12010}
}

@misc{Luo2024,
 author = {Y. Luo},
 title = {On the {Boston}'s {Unramified} {Fontaine}-{Mazur} {Conjecture}},
 year = {2024},
 howpublished = {Preprint, {arXiv}:2404.18967 [math.{NT}] (2024)},
 url = {https://arxiv.org/abs/2404.18967},
 arXiv = {arXiv:2404.18967}
}

@article{KisinWortmann2003,
 author = {Kisin, M. and Wortmann, S.},
 title = {A note on {Artin} motives},
 fjournal = {Mathematical Research Letters},
 journal = {Math. Res. Lett.},
 issn = {1073-2780},
 volume = {10},
 number = {2-3},
 pages = {375--389},
 year = {2003},
 language = {English},
 doi = {10.4310/MRL.2003.v10.n3.a7},
 zbMATH = {2064165},
 Zbl = {1052.14022}
}

@article{Moonen2019,
 author = {Moonen, B.},
 title = {A remark on the {Tate} conjecture},
 fjournal = {Journal of Algebraic Geometry},
 journal = {J. Algebr. Geom.},
 issn = {1056-3911},
 volume = {28},
 number = {3},
 pages = {599--603},
 year = {2019},
 language = {English},
 doi = {10.1090/jag/720},
 zbMATH = {7061054},
 Zbl = {1448.11100}
}

@article{HajirMaire2002,
 author = {Hajir, F. and Maire, C.},
 title = {Unramified subextensions of ray class towers},
 fjournal = {Journal of Algebra},
 journal = {J. Algebra},
 issn = {0021-8693},
 volume = {249},
 number = {2},
 pages = {528--543},
 year = {2002},
 language = {English},
 doi = {10.1006/jabr.2001.9079},
 zbMATH = {1750367},
 Zbl = {1018.11058}
}

@article{GolodSafaravic1964,
 author = {Golod, E. S. and Shafarevich, I. R.},
 title = {On the class field tower},
 fjournal = {Izvestiya Akademii Nauk SSSR. Seriya Matematicheskaya},
 journal = {Izv. Akad. Nauk SSSR, Ser. Mat.},
 issn = {0373-2436},
 volume = {28},
 pages = {261--272},
 year = {1964},
 language = {Russian},
 zbMATH = {3220420},
 Zbl = {0136.02602}
}

@article{AhlqvistCarlson2025,
 author = {Ahlqvist, E. and Carlson, M.},
 title = {Massey products in the {\'e}tale cohomology of number fields},
 fjournal = {Journal f{\"u}r die Reine und Angewandte Mathematik},
 journal = {J. Reine Angew. Math.},
 issn = {0075-4102},
 volume = {823},
 pages = {61--112},
 year = {2025},
 language = {English},
 doi = {10.1515/crelle-2025-0006},
 zbMATH = {8049145}
}

@incollection{SymondsWeigel2000,
 author = {Symonds, P. and Weigel, T.},
 title = {Cohomology of {{\(p\)}}-adic analytic groups},
 booktitle = {New horizons in pro-\(p\) groups},
 isbn = {0-8176-4171-8},
 pages = {349--410},
 year = {2000},
 publisher = {Boston, MA: Birkh{\"a}user},
 language = {English},
 zbMATH = {1574590},
 Zbl = {0973.20043}
}

@article{LubotzkyMann1987I,
 author = {Lubotzky, A. and Mann, A.},
 title = {Powerful {{\(p\)}}-groups. {I}: {Finite} groups},
 fjournal = {Journal of Algebra},
 journal = {J. Algebra},
 issn = {0021-8693},
 volume = {105},
 pages = {484--505},
 year = {1987},
 language = {English},
 doi = {10.1016/0021-8693(87)90211-0},
 zbMATH = {4017157},
 Zbl = {0626.20010}
}

@article{LubotzkyMann1987II,
 author = {Lubotzky, A. and Mann, A.},
 title = {Powerful {{\(p\)}}-groups. {II}: {{\(p\)}}-adic analytic groups},
 fjournal = {Journal of Algebra},
 journal = {J. Algebra},
 issn = {0021-8693},
 volume = {105},
 pages = {506--515},
 year = {1987},
 language = {English},
 doi = {10.1016/0021-8693(87)90212-2},
 zbMATH = {4017169},
 Zbl = {0626.20022}
}

@article{HajirLarsenMaireRamakrishna2025,
 author = {Hajir, F. and Larsen, M. and Maire, C. and Ramakrishna, R.},
 title = {On tamely ramified infinite {Galois} extensions},
 fjournal = {Journal of the London Mathematical Society. Second Series},
 journal = {J. Lond. Math. Soc., II. Ser.},
 issn = {0024-6107},
 volume = {112},
 number = {1},
 pages = {27},
 note = {Id/No e70209},
 year = {2025},
 language = {English},
 doi = {10.1112/jlms.70209},
 zbMATH = {8070199}
}

@misc{OSCAR,
  key          = {OSCAR},
  autohor ={The {OSCAR} Team},
  organization = {The OSCAR Team},
  title        = {O{SCAR} -- {O}pen {S}ource {C}omputer {A}lgebra {R}esearch system, {V}ersion 1.7.0},
  year         = {2026},
  url          = {https://www.oscar-system.org},
}

@misc{Feuerpfeil2026Code,
author = {Feuerpfeil, J.},
title = {{Bockstein and Fontaine Mazur}},
year = {2026},
note = {\url{https://github.com/JulianFeuerpfeil/Bockstein_FontaineMazur}}
}

@article{AndozhskijTsvetkov1975,
 author = {Andozhskij, I. V. and Tsvetkov, V. M.},
 title = {On a series of finite closed p-groups},
 fjournal = {Mathematics of the USSR. Izvestiya},
 journal = {Math. USSR, Izv.},
 issn = {0025-5726},
 volume = {8},
 pages = {285--297},
 year = {1975},
 language = {English},
 doi = {10.1070/IM1974v008n02ABEH002105},
 zbMATH = {3534717},
 Zbl = {0343.20027}
}

@article{BrotoLevi1997,
 author = {Broto, C. and Levi, R.},
 title = {On the homotopy type of {{\(BG\)}} for certain finite 2-groups {{\(G\)}}},
 fjournal = {Transactions of the American Mathematical Society},
 journal = {Trans. Am. Math. Soc.},
 issn = {0002-9947},
 volume = {349},
 number = {4},
 pages = {1487--1502},
 year = {1997},
 language = {English},
 doi = {10.1090/S0002-9947-97-01692-9},
 zbMATH = {997379},
 Zbl = {0945.55012}
}

@article{Weigel2000,
 author = {Weigel, T.},
 title = {On the rigidity of {Lie} lattices and just infinite powerful groups},
 fjournal = {Journal of the London Mathematical Society. Second Series},
 journal = {J. Lond. Math. Soc., II. Ser.},
 issn = {0024-6107},
 volume = {62},
 number = {2},
 pages = {381--397},
 year = {2000},
 language = {English},
 doi = {10.1112/S0024610700001204},
 zbMATH = {1543013},
 Zbl = {1029.17020}
}

@article{DiazRuizViruel2013,
 author = {D{\'{\i}}az, A. and Ruiz, A. and Viruel, A.},
 title = {Cohomological uniqueness of some {{\(p\)}}-groups},
 fjournal = {Proceedings of the Edinburgh Mathematical Society. Series II},
 journal = {Proc. Edinb. Math. Soc., II. Ser.},
 issn = {0013-0915},
 volume = {56},
 number = {2},
 pages = {449--468},
 year = {2013},
 language = {English},
 doi = {10.1017/S0013091512000247},
 zbMATH = {6165744},
 Zbl = {1276.55019}
}
